\documentclass[a4paper,reqno,10pt]{amsart}
\makeatletter
\newcommand*{\rom}[1]{\expandafter\@slowromancap\romannumeral #1@}
\makeatother
\usepackage{epsf,graphicx,epsfig}
\usepackage{amscd}
\usepackage{amsmath,latexsym,amssymb,amsthm}
\usepackage[nospace,noadjust]{cite}
\usepackage{textcomp}
\usepackage{setspace,cite}
\usepackage{mathrsfs,amsmath}
\usepackage{chemarrow}
\usepackage{lscape,fancyhdr,fancybox}
\usepackage{stmaryrd}
\usepackage[all,cmtip]{xy}
\usepackage{tikz-cd}
\usepackage{cancel}
\usetikzlibrary{shapes,arrows,decorations.markings}
\usepackage{graphicx}

\usepackage{color}
\usepackage{bbm, dsfont}
\usepackage{xcolor}
\usepackage{url}
\usepackage{enumerate}
\usepackage[mathscr]{euscript}

  \theoremstyle{plain}
\swapnumbers
    \newtheorem{thm}{Theorem}[section]
    \newtheorem{prop}[thm]{Proposition}

    \newtheorem{corollary}[thm]{Corollary}
    
    \newtheorem{subsec}[thm]{}
\theoremstyle{definition}
    \newtheorem{defn}[thm]{Definition}
        \newtheorem{remark}[thm]{Remark}
\theoremstyle{remark}

\title{}
\author{}
\date{}
\usepackage{amssymb}

\usepackage{hyperref}
\hypersetup{
colorlinks,
citecolor=blue,
filecolor=black,
linkcolor=blue,
urlcolor=black
}

\begin{document}

\title[Nijenhuis Lie $2$-algebras]{Nijenhuis Lie $2$-algebras}

\author{Apurba Das}
\address{Department of Mathematics,
Indian Institute of Technology, Kharagpur 721302, West Bengal, India.}
\email{apurbadas348@gmail.com, apurbadas348@maths.iitkgp.ac.in}

\begin{abstract}
In this paper, we first introduce Nijenhuis Lie 2-algebras as the categorification of Nijenhuis Lie algebras. We prove that the category of Nijenhuis Lie 2-algebras is equivalent to the category of 2-term Nijenhuis $L_\infty$-algebras. Next, given a Nijenhuis Lie algebra, we introduce the notion of a 2-representation and show that the corresponding semidirect product inherits a Nijenhuis Lie 2-algebra structure. On the other hand, we consider a $2$-term representation up to homotopy of a Nijenhuis Lie algebra and obtain a $2$-term Nijenhuis $L_\infty$-algebra as the semidirect product. Finally, we show that the category of $2$-representations and the category of $2$-term representations up to homotopy of a Nijenhuis Lie algebra are equivalent.
\end{abstract}

\maketitle

\noindent {\sf Mathematics Subject Classification (2020).} 17B40, 17B55, 18N25, 18N40.


\noindent {\sf Keywords.} Nijenhuis Lie algebras, Lie $2$-algebras, $L_\infty$-algebras, $2$-representations, $2$-term representations up to homotopy.
\tableofcontents

\section{Introduction}
In mathematics, many higher structures, such as homotopy algebras, higher categories and `oidification' of known structures, have been extensively studied over the past few decades. These higher structures often provide the necessary background for describing higher Lie theory, higher gauge theory, and string theory \cite{chen,baez-hoff,baez-rogers}. Mathematically, higher structures arise when we increase the flexibility of an algebraic structure, and this can be done mainly in two ways: homotopification and categorification. The notion of homotopy algebras, also called sh algebras or $\infty$-algebras, is the homotopy-invariant extension of differential graded algebras (hence called the homotopification of algebras) and first appears in the topological study of loop spaces \cite{stas}. They are also used to describe D-branes in string theory \cite{kaji}. The corresponding Lie analogue of homotopy algebras, called $L_\infty$-algebras, can be treated as a model for Lie algebras where the Jacobi identity holds up to all higher homotopies \cite{lada-markl,lada-s}. This structure has wide applications in mathematics and mathematical physics, including its prominent role in the deformation quantization of Poisson manifolds \cite{kont}. On the other hand, the simplest higher structure that can be obtained as the categorification of a known structure is a $2$-vector space or a categorified vector space \cite{kap,baez-crans}. Further, endowing a Lie algebra structure on a $2$-vector space, one obtains the notion of a Lie $2$-algebra \cite{baez-crans}. This higher structure appears in string theory, higher symplectic geometry, the tetrahedron equation and other related fields \cite{baez-hoff,baez-rogers}. Baez and Crans \cite{baez-crans} have observed that the category of Lie $2$-algebras is equivalent to the category of $2$-term $L_\infty$-algebras (i.e., those $L_\infty$-algebras whose underlying graded vector space is concentrated in arities $0$ and $1$). The ongoing developments in homotopy algebras and related higher structures are reshaping modern research on symmetry, deformations, and higher-dimensional interactions.

\medskip

While algebras and their higher analogues have been studied over the past decades, various well-known operators on algebras and their higher variations have attracted significant attention in recent times. The simplest higher operator is given by an $\infty$-homomorphism between two $\infty$-algebras. In \cite{loday,doubek}, the authors first considered homotopy derivations on $\infty$-algebras and studied the associated operad. Very recently, Rota-Baxter operators and their higher analogues have been extensively studied in connection with cohomology theory, deformation theory, homotopy theory, pre-algebras and post-algebras (see \cite{sheng-survey} for a complete survey). Zhang and Liu \cite{zhang-liu} introduced Rota-Baxter operators on Lie $2$-algebras, and defined the category of Rota-Baxter Lie $2$-algebras. Extending the result of Baez and Crans, they showed that the category of Rota-Baxter Lie $2$-algebras is equivalent to the category of $2$-term Rota-Baxter $L_\infty$-algebras.

\medskip

An interesting operator that appears in the geometry of vector-valued differential forms \cite{fro}, linear deformation theory of algebraic structures \cite{koss}, integrable systems \cite{dorfman}, nonlinear evolution equations \cite{dorfman}, tensor hierarchies and bi-Hamiltonian systems \cite{gra-bi} is the `Nijenhuis operator'. This operator originates in the study of pseudo-complex manifolds \cite{nij}, and later appeared as a key object in the famous Newlander-Nirenberg theorem \cite{newlander}. For a Lie algebra $(\mathfrak{g}, [~, ~])$, a linear map $n : \mathfrak{g} \rightarrow \mathfrak{g}$ is said to be a Nijenhuis operator if it satisfies
\begin{align*}
    [n(x), n(y)] = n \big( [n(x), y] + [x, n(y)] - n ([x, y] ) \big), \text{ for } x, y \in \mathfrak{g}.
\end{align*}
Nijenhuis operators are closely related to Rota-Baxter operators, and induce NS-Lie algebras in the same way Rota-Baxter operators induce pre-algebras and post-algebras \cite{das-twisted}. A pair $( (\mathfrak{g}, [~, ~]), n)$ consisting of a Lie algebra $(\mathfrak{g}, [~, ~])$ together with a distinguished Nijenhuis operator $n$ is called a `Nijenhuis Lie algebra'. This structure appears naturally in the study of Lie bialgebras \cite{das,ravan,hou}, integrable systems and Poisson geometry \cite{azimi}. Nijenhuis Lie algebras become a valuable tool for investigating compatibility conditions, higher-order algebraic and geometric phenomena \cite{gra-bi,baishya}. Their applications extend to mathematical physics, differential geometry, and the theory of higher brackets, highlighting their significance in modern algebraic research \cite{koss,azimi}. Following the importance of this combined structure in various areas of mathematics and mathematical physics, the authors of \cite{das,song} have recently addressed the cohomology, deformations and homotopy theory of Nijenhuis Lie algebras. Among others, they discussed homotopy Nijenhuis operators briefly on $L_\infty$-algebras. Such homotopy operators provide a mechanism for generating a series of compatible higher brackets and controlling deformations within the homotopy framework. Characterizations of skeletal and strict $2$-term Nijenhuis $L_\infty$-algebras are also exhibited in connection with cohomology theory and crossed modules of Nijenhuis Lie algebras.

\medskip

This paper begins by considering Nijenhuis operators on a Lie $2$-algebra. For brevity, we call a Lie $2$-algebra together with a Nijenhuis operator simply a `Nijenhuis Lie $2$-algebra'. Therefore, a Nijenhuis Lie $2$-algebra can be regarded as the categorification of a Nijenhuis Lie algebra. We then introduce the category {\sf NijLie2} consisting of Nijenhuis Lie $2$-algebras. Subsequently, we also consider the category of $2$-term Nijenhuis $L_\infty$-algebras, denoted by ${\sf 2Term Nij L_\infty}$. Then we show that the category {\sf NijLie2} is equivalent to the category ${\sf 2Term Nij L_\infty}$ (cf. Theorem \ref{thmm-finall}). This extends the result of Baez and Crans to the context of Nijenhuis Lie algebras. As a consequence of our result, one can show that the category {\sf SNijLie2} of strict Nijenhuis Lie $2$-algebras (which is a subcategory of {\sf NijLie2}) is equivalent to the category ${\sf S2Term Nij L_\infty}$ of strict $2$-term Nijenhuis $L_\infty$-algebras (this is a subcategory of ${\sf 2Term Nij L_\infty}$).

\medskip

Like higher algebras extend classical algebras to the categorical and homotopical settings, the theory of higher representations extends classical representation theory to the same settings, providing a new framework for studying actions of (higher) algebraic structures on higher vector spaces and chain complexes. In this context, $2$-representations \cite{chu} and $2$-term representations up to homotopy \cite{abad-crainic,sheng-zhu} have emerged as fundamental objects, capturing symmetries that ordinary linear representations cannot adequately describe. These notions play a central role in the study of higher structures. In the last part of this paper, we initiate higher representations of a Nijenhuis Lie algebra. At first, we consider the notion of a $2$-representation or a categorified representation of a given Nijenhuis Lie algebra, and show that the corresponding semidirect product inherits a Nijenhuis Lie $2$-algebra structure. On the other hand, we formulate the concept of $2$-term representation up to homotopy of a Nijenhuis Lie algebra, and obtain a $2$-term Nijenhuis $L_\infty$-algebra as the corresponding semidirect product. Finally, we show that the category of $2$-representations and the category of $2$-term representations up to homotopy are equivalent (cf. Theorem \ref{thm-last}).

\medskip

The paper is organized as follows. In Section \ref{sec2}, we recall some necessary background on Lie $2$-algebras and $2$-term $L_\infty$-algebras. In Section \ref{sec3}, we introduce Nijenhuis operators on a given Lie $2$-algebra, and consider the category {\sf NijLie2} of Nijenhuis Lie $2$-algebras. In Section \ref{sec4}, we revise Nijenhuis operators on a $2$-term $L_\infty$-algebra, and then define the category ${\sf 2Term Nij L_\infty}$ consisting of $2$-term Nijenhuis $L_\infty$-algebras. Here we also prove that the category {\sf NijLie2} is equivalent to the category ${\sf 2Term Nij L_\infty}$. In Section \ref{sec5}, we introduce $2$-representations and $2$-term representations up to homotopy of a Nijenhuis Lie algebra. The paper concludes by showing that the category of $2$-representations and the category of $2$-term representations up to homotopy are equivalent.

\medskip

All vector spaces and (multi)linear maps are over a field ${\bf k}$ of characteristics $0$ unless specified otherwise.

\medskip

\section{Background on Lie $2$-algebras and $2$-term $L_\infty$-algebras}\label{sec2}

In this section, we recall some foundational background on Lie $2$-algebras and $2$-term $L_\infty$-algebras. We conclude by stating the result of Baez and Crans, which states that the category of Lie $2$-algebras and the category of $2$-term $L_\infty$-algebras are equivalent. For more details about the contents of this section, we suggest the references \cite{baez-crans,lada-markl,lada-s}.

\medskip

First, we begin with a $2$-vector space, which is an internal category in the category of vector spaces \cite{baez-crans}. Briefly, a {\bf $2$-vector space} is a category ${C}$ with a vector space of objects $C_0$ and a vector space of morphisms $C_1$ such that the source and target maps $s, t: C_1 \rightarrow C_0$, the object-inclusion map $i: C_0 \rightarrow C_1$, and the composition map $\circ: C_1 \times_{C_0} C_1  \rightarrow C_1$, are all linear maps. We often denote a $2$-vector space as above by $C= (C_1 \rightrightarrows C_0)$ when all the structure maps mentioned above are understood. Given a morphism $f \in C_1$ in a $2$-vector space $C = (C_1 \rightrightarrows C_0)$, we often consider its {\em arrow part} $\overrightarrow{f}$, which is defined by 
\begin{align*}
\overrightarrow{f} := f - i_{s(f)}.
\end{align*}


\medskip

\noindent Let ${C}= (C_1 \rightrightarrows C_0)$ and $C'= (C'_1 \rightrightarrows C'_0)$ be two $2$-vector spaces. Then a {\bf linear functor} $F: {C} \rightarrow C'$ is a functor internal to the category of vector spaces. That is, $F$ is given by a pair $F = (F_0, F_1)$ of linear maps $F_0 : C_0 \rightarrow C'_0$ and $F_1 : C_1 \rightarrow C'_1$ compatible with the structure maps of the $2$-vector spaces ${C}$ and $C'$. 

\medskip

\begin{remark}\label{rem-g}
Given $2$-vector spaces ${C}= (C_1 \rightrightarrows C_0)$ and $C'= (C'_1 \rightrightarrows C'_0)$, their {\em direct sum} is a new $2$-vector space $C \oplus C' := (C_1 \oplus C_1' \rightrightarrows C_0 \oplus C_0' )$ with the source map $s \oplus s'$, target map $t \oplus t'$, object-inclusion map $i \oplus i'$ and the composition map $\circ \oplus \circ'$. It is easy to see that the projection maps $\mathrm{pr}_{C_0} : C_0 \oplus C_0' \rightarrow C_0$ and $\mathrm{pr}_{C_1} : C_1 \oplus C_1' \rightarrow C_1$ (resp.  $\mathrm{pr}_{C'_0} : C_0 \oplus C_0' \rightarrow C'_0$ and $\mathrm{pr}_{C'_1} : C_1 \oplus C_1' \rightarrow C'_1$) yield a linear functor $\mathrm{pr}_C = ( \mathrm{pr}_{C_0}, \mathrm{pr}_{C_1}   )  : C \oplus C' \rightarrow C$ (resp. $\mathrm{pr}_{C'} = (\mathrm{pr}_{C'_0}, \mathrm{pr}_{C'_1} ) : C \oplus C' \rightarrow C'$) between $2$-vector spaces.
\end{remark}

\begin{defn}
 (i)   A {\bf Lie $2$-algebra} is a triple $(C, [~,~], \mathcal{J})$ consisting of a $2$-vector space $C = ({C}_1 \rightrightarrows {C}_0)$ equipped with an antisymmetric bilinear functor $[~,~]: C \times C \rightarrow C$ and a completely antisymmetric trilinear natural isomorphism (called the {\em Jacobiator})
    \begin{align*}
        \mathcal{J}_{x, y, z} : [[ x, y], z] \rightarrow [[x, z], y] + [x, [y, z]]
    \end{align*}
    satisfying the identity that can be expressed by the commutativity of the diagram:
    \[
\xymatrix{
  &  [[[x, y], z], t] \ar[rd]^1 \ar[ld]_{ [\mathcal{J}_{x,y,z}, 1] } &  \\
  [[[x, z], y], t] + [[ x, [y, z]], t] \ar[d]_{  \mathcal{J}_{[x,z], y, t} + \mathcal{J}_{x, [y, z], t} } &   &  [[[x, y], z], t] \ar[d]^{ \mathcal{J}_{[x, y], z, t}}  \\
\substack{  [[[x, z], t], y] + [[x, z], [y, t]]  \\ \quad  + [[x, t], [y, z]] + [x, [[y, z], t] ] } \ar[d]_{  [ \mathcal{J}_{x, z, t}, 1] + 1    } &  & [[[x,y],t],z] + [[x, y], [z, t]] \ar[d]^{ [ \mathcal{J}_{x, y, t}, 1] + 1}  \\
 \substack{  [[[x, t],z], y] + [[x, [z, t]], y] + [[x, z], [y, t]]  \\ + [[x, t], [y, z]] + [x, [[y, z], t] ] } \ar[rd]_{1 + [ 1 , \mathcal{J}_{y, z, t}]}  &  & \substack{  [[[x, t], y], z] + [[x, [y, t]], z] \\ + [[x, y], [z, t]] } \ar[ld]^{ \mathcal{J}_{[x, t], y, z} + \mathcal{J}_{x, [y, t], z} + \mathcal{J}_{x, y, [z, t]}} \\
  & \substack{  [[[ x, t], z], y] + [[x, t], [y, z]]  \\ + [[ x, z], [y, t]] + [ x, [[ y, t], z] ]    \\ + [[ x, [z, t]], y] + [ x, [y, [z, t]]].} & 
}
\]

\medskip

(ii) Let $(C, [~,~], \mathcal{J})$ and $(C', [~,~]', \mathcal{J}')$ be two Lie $2$-algebras. A {\bf homomorphism} of Lie $2$-algebras from the former one to the latter one is a linear functor $F= (F_0, F_1): C \rightarrow C'$ among the underlying $2$-vector spaces equipped with an antisymmetric bilinear natural transformation
\begin{align*}
    F_2 (x, y) : [F_0 (x), F_0 (y)]' \rightarrow F_0 [x, y]
\end{align*}
that makes the following diagram commutative:
\[
\xymatrix{
[[ F_0 (x), F_0 (y)]', F_0 (z)]' \ar[d]_{ [ F_2 (x, y), 1]'    } \ar[rrr]^{ \mathcal{J}'_{F_0 (x), F_0 (y), F_0 (z)} \qquad \qquad \qquad } & & & [ [ F_0 (x), F_0 (z)]', F_0 (y) ]' + [F_0 (x), [ F_0 (y), F_0 (z)]']' \ar[d]^{  [ F_2 (x, z), 1]' + [1, F_2 (y, z)]'    } \\
[ F_0 [x, y], F_0 (z)]' \ar[d]_{F_2 ([x, y], z)} & & & [ F_0 [x, z], F_0 (y)]' + [F_0 (x), F_0 [y, z]]' \ar[d]^{  F_2 ([x, z], y) + F_2 (x, [y, z])} \\
F_0 ([[x, y], z]) \ar[rrr]_{F_1 ( \mathcal{J}_{x, y, z})} & & & F_0 ([ [x, z], y] + [x, [y, z]]).
}
\]
\end{defn}

\medskip

\noindent A homomorphism as above is often denoted by the triple $(F_0, F_1, F_2)$. The collection of all Lie $2$-algebras and homomorphisms between them forms a category, denoted by ${\sf Lie2}.$ It has been observed in \cite{baez-crans} that Lie $2$-algebras are intimately related to $L_\infty$-algebras whose underlying graded vector space is concentrated in arities $0$ and $1$. Such $L_\infty$-algebras are known as $2$-term $L_\infty$-algebras.

\begin{defn}\label{defi-2term}
    (i) A {\bf $2$-term $L_\infty$-algebra} is a triple $\mathfrak{G} = (\mathfrak{g}_1 \xrightarrow{d} \mathfrak{g}_0, \llbracket ~, ~ \rrbracket , l_3)$ consisting of a $2$-term complex $\mathfrak{g}_1 \xrightarrow{d} \mathfrak{g}_0$ endowed with an antisymmetric bilinear map $\llbracket ~, ~ \rrbracket : \mathfrak{g}_i \times \mathfrak{g}_j \rightarrow \mathfrak{g}_{i+j}$ (for $0 \leq i+j \leq 1$) and a completely antisymmetric trilinear operation $l_3 : \mathfrak{g}_0 \times \mathfrak{g}_0 \times \mathfrak{g}_0 \rightarrow \mathfrak{g}_1$ satisfying the following list of identities: for $x, y, z, t \in \mathfrak{g}_0$ and $h, k \in \mathfrak{g}_1$,
    \begin{itemize}
        \item[(a)] $d \llbracket x, h \rrbracket = \llbracket x, d(h) \rrbracket$,
         \item[(b)] $ \llbracket  d(h), k \rrbracket =  \llbracket  h, d(k) \rrbracket,$
          \item[(c)] $d (l_3 (x, y, z)) = \llbracket x, \llbracket  y, z  \rrbracket   \rrbracket + \llbracket y, \llbracket  z, x  \rrbracket   \rrbracket + \llbracket z, \llbracket x,  y  \rrbracket   \rrbracket,$
           \item[(d)] $l_3 (x, y, d(h)) = \llbracket x, \llbracket  y, h  \rrbracket   \rrbracket + \llbracket y, \llbracket  h, x  \rrbracket   \rrbracket + \llbracket h, \llbracket x,  y  \rrbracket   \rrbracket,$
            \item[(e)] $\llbracket  x, l_3 (y, z, t) \rrbracket - \llbracket y, l_3 (x, z, t) \rrbracket +  \llbracket z, l_3 (x, y, t) \rrbracket - \llbracket t, l_3 (x, y, z) \rrbracket - l_3 ( \llbracket x, y \rrbracket, z, t ) \\
            \qquad +  l_3 ( \llbracket x, z \rrbracket, y, t ) -  l_3 ( \llbracket x, t  \rrbracket, y, z ) -  l_3 ( \llbracket y, z \rrbracket, x, t ) +  l_3 ( \llbracket y, t \rrbracket, x, z ) -  l_3 ( \llbracket z, t \rrbracket, x, y ) = 0.$
    \end{itemize}

    \medskip

    (ii) Let $\mathfrak{G} = (\mathfrak{g}_1 \xrightarrow{d} \mathfrak{g}_0, \llbracket ~, ~ \rrbracket , l_3)$ and $\mathfrak{G}' = (\mathfrak{g}'_1 \xrightarrow{d'} \mathfrak{g}'_0, \llbracket ~, ~ \rrbracket' , l'_3)$ be $2$-term $L_\infty$-algebras. An {\bf $L_\infty$-homomorphism} from $\mathfrak{G}$ to $\mathfrak{G}'$ consists of 
    \begin{itemize}
        \item[-] a chain map $ (\varphi_0, \varphi_1)$ among the underlying $2$-term complexes (i.e., $\varphi_0 : \mathfrak{g}_0 \rightarrow \mathfrak{g}'_0$ and $\varphi_1 : \mathfrak{g}_1 \rightarrow \mathfrak{g}'_1$ are linear maps satisfying $ \varphi_0 \circ d = d' \circ \varphi_1$),
        \item[-] an antisymmetric bilinear map $\varphi_2 : \mathfrak{g}_0 \times \mathfrak{g}_0 \rightarrow \mathfrak{g}'_1$ such that for all $x, y, z \in \mathfrak{g}_0$ and $h \in \mathfrak{g}_1$, 
    \end{itemize}
    \begin{align*}
        \varphi_0 \llbracket x, y \rrbracket - \llbracket \varphi_0 (x), \varphi_0 (y) \rrbracket' =~& d' (\varphi_2 (x, y)),\\
        \varphi_1 \llbracket x, h \rrbracket - \llbracket \varphi_0 (x), \varphi_1 (h) \rrbracket' =~& \varphi_2 (x, d(h)),\\
       \varphi_1 ( l_3 (x, y, z) ) - l_3' ( \varphi_0 (x), \varphi_0 (y), \varphi_0 (z)) =~& \varphi_2 (x, \llbracket y, z \rrbracket) + \varphi_2 (y, \llbracket z, x \rrbracket) + \varphi_2 (z, \llbracket x, y \rrbracket) \\
       &+ \llbracket \varphi_0 (x), \varphi_2 (y, z) \rrbracket' +  \llbracket \varphi_0 (y), \varphi_2 ( z, x) \rrbracket' +  \llbracket \varphi_0 (z), \varphi_2 (x, y) \rrbracket'.
    \end{align*}
\end{defn}

\noindent We shall denote an $L_\infty$-homomorphism as above by the triple $(\varphi_0, \varphi_1, \varphi_2)$. Then it can be shown that the collection of all $2$-term $L_\infty$-algebras and $L_\infty$-homomorphisms between them also forms a category. We denote this category by {\sf $2$TermL$_\infty$}. Then, the main result of \cite{baez-crans} is as follows.

\begin{thm}
The categories ${\sf Lie2}$ and {\sf $2$TermL$_\infty$} are equivalent. 
\end{thm}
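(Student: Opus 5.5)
We would follow Baez and Crans and construct two functors $T : {\sf 2TermL_\infty} \rightarrow {\sf Lie2}$ and $S : {\sf Lie2} \rightarrow {\sf 2TermL_\infty}$, then exhibit natural isomorphisms $S \circ T \cong \mathrm{id}$ and $T \circ S \cong \mathrm{id}$.

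\emph{The functor $T$.} Given $\mathfrak{G} = (\mathfrak{g}_1 \xrightarrow{d} \mathfrak{g}_0, \llbracket ~, ~ \rrbracket, l_3)$, we take $C_0 = \mathfrak{g}_0$ and $C_1 = \mathfrak{g}_0 \oplus \mathfrak{g}_1$. The structure maps are
\[ s(x,h) = x, \qquad t(x,h) = x + d(h), \qquad i(x) = (x,0), \]
and composition is $(x + d(h), k) \circ (x,h) = (x, h+k)$. The bracket is defined by
\[ [(x,h),(y,k)] = \big(\llbracket x,y \rrbracket,\ \llbracket x,k \rrbracket + \llbracket h,y \rrbracket + \llbracket h, dk \rrbracket\big), \]
and the Jacobiator by $\mathcal{J}_{x,y,z} = \big([[x,y],z],\ l_3(x,y,z)\big)$, with signs adjusted to the convention of the definition. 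The verifications go as follows:
\begin{itemize}
\item[(1)] Functoriality of the bracket (compatibility with $s$, $t$ and $\circ$) uses identities (a) and (b).
\item[(2)] The Jacobiator has the right target by (c) and is natural by (d).
\item[(3)] The big coherence diagram reduces, after taking arrow parts, to identity (e).
\end{itemize}
On morphisms, $(\varphi_0,\varphi_1,\varphi_2)$ is sent to $F_0 = \varphi_0$, $F_1(x,h) = (\varphi_0 x, \varphi_1 h)$ and $F_2(x,y) = \big([\varphi_0 x, \varphi_0 y]',\ \varphi_2(x,y)\big)$. The three $L_\infty$-homomorphism identities give, in order, the correct target, naturality, and the commutative hexagon. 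Functoriality of $T$ follows from the composition rule $(\varphi\circ\psi)_2 = \varphi_2(\psi_0 \cdot, \psi_0 \cdot) + \varphi_1 \psi_2$.

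\emph{The functor $S$.} Given a Lie $2$-algebra $C$, we set $\mathfrak{g}_0 = C_0$, $\mathfrak{g}_1 = \ker s$ and $d = t|_{\ker s}$. The key point is that $C_1 \cong C_0 \oplus \ker s$ via $f \mapsto (s(f), \overrightarrow{f})$, and under this identification composition becomes addition of arrow parts. This is the standard fact that every $2$-vector space is determined by a $2$-term complex. Brackets are obtained by restricting: $\llbracket x,y \rrbracket = [x,y]$ on objects, $\llbracket x,h \rrbracket = [i_x, h]$, and $\llbracket h,k \rrbracket = 0$. We then set $l_3 = \overrightarrow{\mathcal{J}}$, $\varphi_0 = F_0$, $\varphi_1 = F_1|_{\ker s}$ and $\varphi_2 = \overrightarrow{F_2}$. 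Identities (a) through (e) and the homomorphism identities follow by reading the previous verifications backwards.

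\emph{The natural isomorphisms.} The composite $S \circ T$ is literally the identity. For $T \circ S$, we use the isomorphism $C_1 \to C_0 \oplus \ker s$, $f \mapsto (s(f), \overrightarrow{f})$, which yields an isomorphism of Lie $2$-algebras $C \cong TS(C)$, natural in $C$. The hard part of this step is checking that the bracket on $C_1$ is recovered, i.e. that $[f,g]$ is determined by the brackets $[i_x, \cdot]$ and $[\cdot, \cdot]$ on arrow parts. This uses bilinearity together with functoriality via the interchange law: for $h,k \in \ker s$ one has $[h,k] = [h, i_{dk}]$ up to identities, which explains the term $\llbracket h, dk \rrbracket$.

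The main obstacle we expect is step (3): verifying that the pentagon-like coherence diagram for $\mathcal{J}$ is exactly equivalent to identity (e), with all signs correct. We would handle it by computing the arrow part of each side, using the fact that in $C_0 \oplus \mathfrak{g}_1$ composition adds arrow parts. Each edge of the diagram then contributes a term $l_3(\cdot)$ or $\llbracket \cdot, l_3(\cdot) \rrbracket$, and equating the two paths gives the ten-term identity (e).
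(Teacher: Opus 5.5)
Your proposal is correct and is essentially the Baez--Crans argument that the paper cites. The paper gives no proof of its own, but it reuses exactly your constructions in the proof of Theorem~\ref{thmm-finall}: $\mathfrak{g}_1=\ker s$, $\llbracket x,h\rrbracket=[i_x,h]$ and $l_3=\overrightarrow{\mathcal{J}}$ in one direction; $C_1=\mathfrak{g}_0\oplus\mathfrak{g}_1$ and $\mathcal{J}_{x,y,z}=([[x,y],z],l_3(x,y,z))$ in the other; $ST$ the identity and $TS\cong 1$ via $(x,h)\mapsto i_x+h$. Your morphism bracket agrees with the paper's $(\llbracket x,y\rrbracket,\llbracket x,k\rrbracket-\llbracket y,h\rrbracket+\llbracket dh,k\rrbracket)$ by antisymmetry and identity (b), and your interchange-law observation $[h,k]=[h,i_{dk}]$ is exactly what makes the comparison $TS(C)\cong C$ preserve brackets.
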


This is a fundamental result, and its proof relies on some useful constructions. We will implicitly use them while proving our Theorem \ref{thmm-finall}.

\medskip

\section{Nijenhuis Lie 2-algebras}\label{sec3}

In this section, we first introduce Nijenhuis operators on Lie $2$-algebras. For brevity, we call a Lie $2$-algebra equipped with a Nijenhuis operator on it simply a Nijenhuis Lie $2$-algebra. Finally, we observe that the collection of all Nijenhuis Lie $2$-algebras with suitable homomorphisms between them forms a category. 


\begin{defn}\label{defi-31}
   Let $(C, [~, ~], \mathcal{J})$ be a Lie $2$-algebra. A {\bf Nijenhuis operator} on $C$ is a linear functor $N = (N_0, N_1) : C \rightarrow C$ with an antisymmetric bilinear natural isomorphism
   \begin{align}\label{cal-n}
       \mathcal{N}_{x, y} : [ N_0 (x), N_0 (y)] \rightarrow N_0 [ N_0 (x), y] + N_0 [x, N_0 (y)] - N_0^2 [x, y]
   \end{align}
    that makes the following diagram commutative:
    \begin{align}\label{big-diag}
    \xymatrix{
     & [ [ N_0 (x), N_0 (y) ] , N_0 (z) ] \ar[rd]^1 \ar[ld]_{\mathcal{J}_{ N_0 (x), N_0 (y), N_0 (z)} \qquad } & \\
  \substack{ [[ N_0 (x), N_0 (z)], N_0 (y) ] \\ + [N_0 (x), [ N_0 (y), N_0 (z)]] }  \ar[d]_{  [\mathcal{N}_{x, z}, 1] + [ 1, \mathcal{N}_{y, z}] } & & [ [ N_0 (x), N_0 (y) ] , N_0 (z) ] \ar[d]^{[ \mathcal{N}_{x, y}, 1 ]} \\
   \substack{ [ N_0 [  N_0 (x), z],  N_0 (y)] + [  N_0 [x,  N_0 (z)],  N_0 (y) ]  \\ - [ N_0^2 [x, z],  N_0 (y)] + [  N_0 (x),  N_0 [  N_0 (y), z]] \\ + [ N_0 (x),  N_0 [y,  N_0 (z)]] - [  N_0 (x),  N_0^2 [y, z]] } \ar[d]^{ \substack{    \mathcal{N}_{ [N_0 (x), z], y} + \mathcal{N}_{ [x, N_0 (z)], y }    - \mathcal{N}_{N_0 [x, z], y} \\ + \mathcal{N}_{x, [N_0(y), z]}     + \mathcal{N}_{x, [y, N_0 (z)]} - \mathcal{N}_{x, N_0 [y, z]}    }    } & &  \substack{ [N_0 [N_0 (x), y], N_0 (z)] \\ + [N_0 [x, N_0 (y)], N_0 (z)] \\ - [N_0^2 [x, y], N_0 (z)]} \ar[d]_{ \substack{  \mathcal{N}_{ [N_0 (x), y], z} + \mathcal{N}_{ [x, N_0 (y)], z} \\ - \mathcal{N}_{ N_0 [x, y], z} }   } \\
     A \ar[d]_{ \substack{ 1 + N_1 \mathcal{J}_{N_0 (x), z, N_0 (y) } \\ - N_1 \mathcal{N}_{[x, z], y} - N_1 \mathcal{N}_{x, [y, z]} }  } & &  A'  \ar[d]_{ \substack{  1 + N_1 \mathcal{J}_{N_0 (x), y, N_0 (z)} \\ - N_1^2 \mathcal{J}_{N_0 (x), y, z} + N_1 \mathcal{J}_{x, N_0 (y), N_0 (z)}  \\  - N_1^2 \mathcal{J}_{x, N_0 (y), z} - N_1 \mathcal{N}_{[x, y], z}   } } \\
     B \ar[rd]_{1 + N_1 [\mathcal{N}_{x, y}, 1 ]} & & B' \ar[ld]^{ \quad \qquad \substack{    1 + N_1 [\mathcal{N}_{x, z}, 1] + N_1 [1, \mathcal{N}_{y, z}] \\  -N_1^2 \mathcal{J}_{x, y, N_0 (z)} + N_1^3 \mathcal{J}_{x, y, z}   } } \\
      & C &
    }
    \end{align}
    where
    \begin{align*}
        A&= N_0 [ N_0 [ N_0 (x), z], y] + N_0 [[N_0 (x), z], N_0(y) ] - N_0^2 [[ N_0 (x), z], y] + N_0 [ N_0 [x, N_0 (z)], y] \\
         &~ ~+ N_0 [[ x, N_0 (z)], N_0 (y)] -N_0^2 [[x, N_0 (z)], y] - N_0 [ N_0^2 [x, z], y] - N_0 [ N_0 [x, z], N_0 (y)] + N_0^2 [ N_0 [x, z], y] \\
         &~ + N_0 [ N_0 (x), [ N_0 (y), z]] + N_0 [ x, N_0 [ N_0 (y), z]] - N_0^2 [x, [N_0 (y), z]] + N_0 [ N_0 (x), [y, N_0 (z)]] \\ &~+ N_0 [ x, N_0 [y, N_0 (z)]] - N_0^2 [x, [y, N_0 (z)]] - N_0 [ N_0 (x), N_0 [y, z]]- N_0 [x, N_0^2 [y, z]] + N_0^2 [x, N_0 [y,z]],
    \end{align*}
    \begin{align*}
        B &= N_0 [ N_0 [ N_0 (x), z], y] + N_0 [[ N_0 (x), N_0 (y)], z] - N_0^2 [ [ N_0 (x), z], y] + N_0 [ N_0 [x, N_0 (z)], y ] \\
        & ~ + N_0[[ x, N_0 (z)], N_0 (y)] - N_0^2 [[ x, N_0 (z)], y] - N_0 [ N_0^2 [x, z], y] - N_0^2 [[ x, z], N_0 (y)] + N_0^3 [[x, z], y] \\
        & ~ + N_0 [x, N_0 [ N_0 (y), z]] - N_0^2 [x, [ N_0 (y), z]] + N_0 [ N_0 (x), [y, N_0 (z)]] + N_0 [x, N_0 [y, N_0 (z)]] \\
        & ~ - N_0^2 [x, [y, N_0 (z)]] - N_0^2 [ N_0 (x), [y, z]] + N_0^3 [x, [y, z]] - N_0 [x, N_0^2 [y, z]],
    \end{align*}
    \begin{align*}
        A' &= N_0 [ N_0 [ N_0 (x), y], z] + N_0 [[ N_0 (x), y], N_0 (z)] - N_0^2 [[ N_0 (x), y], z] + N_0 [ N_0 [x, N_0 (y)], z] \\ &~+ N_0 [[ x, N_0 (y)], N_0 (z)] - N_0^2 [[x, N_0 (y)], z] - N_0 [ N_0^2 [x, y], z] - N_0 [ N_0 [x, y], N_0 (z)] + N_0^2 [ N_0 [x, y], z],
    \end{align*}
    \begin{align*}
        B' &= N_0 [ N_0 [ N_0 (x), y], z] + N_0 [[ N_0 (x), N_0 (z)], y] + N_0 [ N_0 (x), [y, N_0 (z)]] - N_0^2 [[ N_0 (x), z], y] \\
       & ~ ~ - N_0^2 [N_0 (x), [y, z]] + N_0 [ N_0 [x, N_0 (y)], z] + N_0 [[ x, N_0 (z)], N_0 (y)] + N_0 [ x, [ N_0 (y), N_0 (z)]] \\
       & ~ ~ - N_0^2 [[ x, z], N_0 (y)] - N_0^2 [x, [N_0 (y), z]] - N_0 [ N_0^2 [x, y], z] - N_0^2 [[ x, y], N_0 (z)] + N_0^3 [[x, y], z],
    \end{align*}
    \begin{align*}
        C &= N_0 [ N_0 [ N_0 (x), z], y] + N_0 [ N_0 [ N_0 (x), y], z] + N_0 [ N_0 [x, N_0 (y)], z] - N_0 [ N_0^2 [x, y], z] - N_0^2 [ [ N_0 (x), z], y] \\
        &+ N_0 [ N_0 [x, N_0 (z)], y ]  + N_0[[ x, N_0 (z)], N_0 (y)] - N_0^2 [[ x, N_0 (z)], y] - N_0 [ N_0^2 [x, z], y] - N_0^2 [[ x, z], N_0 (y)] \\ 
        &+ N_0^3 [[x, z], y]  + N_0 [x, N_0 [ N_0 (y), z]] - N_0^2 [x, [ N_0 (y), z]] + N_0 [ N_0 (x), [y, N_0 (z)]] + N_0 [x, N_0 [y, N_0 (z)]] \\
        & ~ - N_0^2 [x, [y, N_0 (z)]] - N_0^2 [ N_0 (x), [y, z]] + N_0^3 [x, [y, z]] - N_0 [x, N_0^2 [y, z]].
    \end{align*}
\end{defn}

\medskip

\noindent A Lie $2$-algebra equipped with a Nijenhuis operator is called a {\bf Nijenhuis Lie $2$-algebra}. We denote a Nijenhuis Lie $2$-algebra as above by the tuple $((C, [~, ~], \mathcal{J}), N, \mathcal{N})$.



\begin{defn}\label{defi-homo-rbl2}
Let $((C, [~, ~], \mathcal{J}), N, \mathcal{N})$ and $((C', [~, ~]', \mathcal{J}'), N', \mathcal{N}')$ be two Nijenhuis Lie $2$-algebras. Then a {\bf homomorphism} of Nijenhuis Lie $2$-algebras from the former one to the latter one is a homomorphism $(F_0, F_1, F_2): (C, [~, ~], \mathcal{J}) \rightarrow (C', [~, ~]', \mathcal{J}') $ of the underlying Lie $2$-algebras together with a linear natural transformation 
\begin{align}\label{f-thr}
    F_3 (x) : N_0' (F_0 (x)) \rightarrow F_0 (N_0 (x))
\end{align}
making the following diagram is commutative:
   \begin{align}\label{homo-nij}
    \xymatrix{
   [N_0' (F_0 (x)), N_0' (F_0 (y))]' \ar[r]^{\substack{\mathcal{N}'_{F_0 (x), F_0 (y)} \\ \\  }} \ar[d]_{[F_3 (x), F_3 (y)]'} &  \substack{N_0' [N_0' (F_0 (x)), F_0 (y)]' \\ + N_0' [ F_0 (x), N_0' (F_0 (y))]' \\ - {N'_0}^2 [F_0 (x), F_0 (y)]' } \ar[r]^{ \substack{ N_1' [F_3 (x),1]'  \\ + N_1' [1, F_3 (y)]' \\ - {N_1'}^2 F_2 (x, y)  } } & \substack{N_0'  [F_0 (N_0(x)), F_0 (y)]' \\ + N_0' [F_0 (x), F_0 (N_0 (y))]'  \\ - {N'_0}^2  F_0 [x, y] } \ar[d]_{ \substack{  N_1' F_2 (N_0 (x), y)   + N_1' F_2 (x, N_0 (y))   \\ - N_1' F_3 [x, y]  } } \\
 [F_0 (N_0 (x)), F_0 (N_0 (y))]'  \ar[d]_{ F_2 (N_0 (x), N_0 (y))  } &  & \substack{ N_0' ( F_0 [N_0 (x), y] + F_0 [x, N_0 (y)]) \\ - N_0' F_0 N_0 [x, y] }  \ar[d]_{ \substack{ F_3 [N_0 (x), y] + F_3 [x, N_0 (y)] \\ -  F_3 (N_0 [x, y] )} } \\
   F_0 [ N_0 (x), N_0 (y)] \ar[rr]_{F_1 (\mathcal{N}_{x, y}) } & & F_0 ( N_0 [N_0 (x), y] + N_0 [x, N_0 (y)] - N_0^2 [x, y]).
    }
    \end{align}

\end{defn}
\noindent A homomorphism as above is often abbreviated by the quadruple $F = (F_0, F_1, F_2, F_3)$ if no confusion arises. Then we have the following result.

\begin{prop}\label{prop-categ}
    The collection of all Nijenhuis Lie $2$-algebras, together with homomorphisms between them, forms a category. (We denote this category by ${\sf NijLie2}$).
\end{prop}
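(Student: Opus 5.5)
To prove Proposition \ref{prop-categ}, I will exhibit identity morphisms and a composition law, and then check that composition is associative and unital. The Lie $2$-algebra part of all these verifications is already available: by the result of Baez and Crans, ${\sf Lie2}$ is a category. There, the composite of $(F_0,F_1,F_2)\colon C\to C'$ and $(G_0,G_1,G_2)\colon C'\to C''$ is
\begin{align*}
(G_0F_0,\; G_1F_1,\; G_1 F_2(x,y)\circ G_2(F_0(x),F_0(y))),
\end{align*}
and the identity is $(\mathrm{id},\mathrm{id},1)$. So the only new ingredient is the natural transformation $F_3$ together with diagram (\ref{homo-nij}).

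\textbf{Identity morphism.} For a Nijenhuis Lie $2$-algebra $((C,[~,~],\mathcal{J}),N,\mathcal{N})$, I take $\mathrm{id}=(\mathrm{id}_{C_0},\mathrm{id}_{C_1},F_2=1,F_3=1)$, where $F_3(x)=i_{N_0(x)}$. With these choices every vertical arrow in (\ref{homo-nij}) is an identity. Both horizontal composites reduce to $\mathcal{N}_{x,y}$, so the diagram commutes.

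\textbf{Composition.} Given $F\colon C\to C'$ and $G\colon C'\to C''$, I define
\begin{align*}
(G\circ F)_3(x) := G_1(F_3(x))\circ G_3(F_0(x)) \colon N_0''G_0F_0(x)\to G_0N_0'F_0(x)\to G_0F_0N_0(x).
\end{align*}
This is a linear natural transformation, since it is a composite of whiskerings of natural transformations by linear functors.

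\textbf{Main step: diagram (\ref{homo-nij}) for $G\circ F$.} I expect this to be the main obstacle. My plan is to paste three commuting pieces into one large diagram:
\begin{itemize}
\item[(i)] diagram (\ref{homo-nij}) for $G$, evaluated at $F_0(x),F_0(y)$;
\item[(ii)] the image under the functor $G_1$ of diagram (\ref{homo-nij}) for $F$;
\item[(iii)] naturality squares linking (i) and (ii).
\end{itemize}
The naturality squares in (iii) come from:
\begin{itemize}
\item[(a)] naturality of $G_2$, to move $[G_1F_3(x),G_1F_3(y)]''$ past $G_2$;
\item[(b)] naturality of $G_3$, to move $N_1''G_1(\cdot)$ past $G_1N_1'(\cdot)$;
\item[(c)] functoriality of $N''$ and of $[~,~]''$, to split the composite arrows $N_1''[\,(G\circ F)_3(x),1]''$ and $(N_1'')^2(G\circ F)_2(x,y)$ into their two pieces.
\end{itemize}

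Because the objects involved are sums, and all structure maps are linear, these composites should be handled morphism-wise. The cleanest way to do this is to pass to arrow parts $\overrightarrow{f}=f-i_{s(f)}$. Composition then becomes addition of arrow parts, so commutativity of a diagram reduces to an equality of sums of arrow parts. Under this reduction the required identity for $G\circ F$ is the $G_1$-image of the identity for $F$, plus the identity for $G$ at $F_0(x),F_0(y)$. The cross terms from (a)--(c) should cancel.

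\textbf{Associativity and unitality.} These are immediate. On the $(F_0,F_1,F_2)$ part they are inherited from ${\sf Lie2}$. On $F_3$, associativity reduces to associativity of composition in $C_1$ together with functoriality of $H_1$:
\begin{align*}
H_1G_1F_3 \circ H_1G_3F_0 \circ H_3G_0F_0 .
\end{align*}
Unitality follows because $F_3\circ 1=F_3$ and $G_1(1)=1$.

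Hence Nijenhuis Lie $2$-algebras and their homomorphisms form the category ${\sf NijLie2}$.
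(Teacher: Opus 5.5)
Your proposal is correct and follows the paper's proof. You use the same composite $(G\circ F)_3(x)=G_1(F_3(x))\circ G_3(F_0(x))$ (written in functional order, whereas the paper lists the two arrows in diagrammatic order) and the same identity homomorphism; the paper only asserts that the verification is easy, and your pasting argument via naturality of $G_2$, $G_3$ and functoriality of $N''$ and $[~,~]''$ is a sound way to supply that omitted check.
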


\begin{proof}
   Here, we only define the composition of homomorphisms and describe identity homomorphisms. Let 
   \begin{align*}
       F =~& (F_0 , F_1, F_2, F_3) : ((C, [~, ~], \mathcal{J}), N, \mathcal{N}) \rightarrow ((C', [~, ~]', \mathcal{J}'), N', \mathcal{N}'), \\
       G =~& (G_0 , G_1, G_2, G_3) : ((C', [~, ~]', \mathcal{J}'), N', \mathcal{N}') \rightarrow ((C'', [~, ~]'', \mathcal{J}''), N'', \mathcal{N}'')
   \end{align*}
   be two homomorphisms between Nijenhuis Lie $2$-algebras. Then their composition 
   \begin{align*}
       G \circ F : ((C, [~, ~], \mathcal{J}), N, \mathcal{N}) \rightarrow ((C'', [~, ~]'', \mathcal{J}''), N'', \mathcal{N}'')
   \end{align*}
   is defined to be the usual composition of the underlying linear functors (i.e., $ (G \circ F)_0 = G_0 \circ F_0$ and $ (G \circ F)_1 = G_1 \circ F_1$), and letting $(G \circ F)_2$ and $(G \circ F)_3$ be 
   \begin{align*}
        &(G \circ F)_2 (x, y) : [   (G \circ F)_0(x),  (G \circ F)_0(y)]'' \xrightarrow{ G_2{(F_0 (x), F_0 (y)})} G_0 [F_0 (x), F_0 (y)]' \xrightarrow{ G_1 ( F_2{(x, y)} )}  (G \circ F)_0 [x, y],\\
        & \quad \qquad (G \circ F)_3 (x) : N_0'' ( (G \circ F)_0 (x)) \xrightarrow{G_3 {(F_0 (x))}} G_0 ( N_0' (F_0 (x))) \xrightarrow{G_1 ( F_3(x) )} (G \circ F)_0 (N_0 (x)). 
   \end{align*}
   For any Nijenhuis Lie $2$-algebra $(C, N, \mathcal{N})$, the identity homomorphism $1_{(C, N, \mathcal{N})} : (C, N, \mathcal{N}) \rightarrow (C, N, \mathcal{N})$ is defined by taking the identity linear functor as its underlying functor, together with the identity natural transformations $(1_{(C, N, \mathcal{N})})_2$ and $(1_{(C, N, \mathcal{N})})_3$. With these structures, it is easy to see that ${\sf NijLie2}$ is a category.
\end{proof} 

\begin{remark}
A Nijenhuis Lie $2$-algebra $((C, [~, ~], \mathcal{J}), N, \mathcal{N})$ is said to be {\bf strict} if the underlying Lie $2$-algebra $(C, [~, ~], \mathcal{J})$ is strict in the sense that the Jacobiator $\mathcal{J}$ is the identity isomorphism, and $\mathcal{N}$ is also identity isomorphism. The collection of all strict Nijenhuis Lie $2$-algebras and homomorphisms between them is also a category, denoted by ${\sf SNijLie2}$. It is a subcategory of ${\sf NijLie2}$.
\end{remark}

\medskip

\section{2-term Nijenhuis $L_\infty$-algebras and the categorical equivalence}\label{sec4}

In this section, we briefly recall Nijenhuis operators on $2$-term $L_\infty$-algebras. We abbreviate a $2$-term $L_\infty$-algebra endowed with a distinguished Nijenhuis operator simply by a $2$-term Nijenhuis $L_\infty$-algebra. We prove that the collection of all $2$-term Nijenhuis $L_\infty$-algebras and suitable homomorphisms between them forms a category. Finally, this category is shown to be equivalent to the category ${\sf NijLie2}$ of Nijenhuis Lie $2$-algebras considered in the previous section.

\begin{defn}
    Let $\mathfrak{G} = (\mathfrak{g}_1 \xrightarrow{d} \mathfrak{g}_0, \llbracket ~, ~ \rrbracket , l_3)$ be a $2$-term $L_\infty$-algebra. Then a {\bf (homotopy) Nijenhuis operator} on $\mathfrak{G}$ is a triple $\mathfrak{N} = (n_0, n_1, n_2)$ consisting of

        \vspace{0.1cm}

    - linear maps $n_0 : \mathfrak{g}_0 \rightarrow \mathfrak{g}_0$ and $n_1 : \mathfrak{g}_1 \rightarrow \mathfrak{g}_1$ satisfying $n_0 \circ d = d \circ n_1$,

        \vspace{0.1cm}

    - an antisymmetric bilinear map $n_2 : \mathfrak{g}_0 \times \mathfrak{g}_0 \rightarrow \mathfrak{g}_1$

        \vspace{0.1cm}

  \noindent  such that for all $x, y, z \in \mathfrak{g}_0$ and $h \in \mathfrak{g}_1$, the following identities hold:
    \begin{align}
    n_0 ( \llbracket n_0 (x), y \rrbracket + \llbracket x, n_0 (y) \rrbracket - n_0 \llbracket x, y \rrbracket ) - \llbracket n_0 (x), n_0 (y) \rrbracket =~& d (n_2 (x, y)), \label{homo-r1} \\
     n_1 ( \llbracket n_0 (x), h \rrbracket + \llbracket x, n_1 (h) \rrbracket - n_1 \llbracket x, h \rrbracket ) - \llbracket n_0 (x), n_1 (h) \rrbracket =~& n_2 (x, d (h) ), \label{homo-r2}
    \end{align}
    \begin{align}\label{homo-r3}
       &\big\{ \llbracket n_0 (x), n_2 (y, z) \rrbracket + n_2 ( x,  \llbracket n_0 (y), z \rrbracket + \llbracket y, n_0 (z) \rrbracket - n_0 \llbracket y , z \rrbracket ) - n_1 \big(  \llbracket x, n_2 (y, z) \rrbracket  + n_2 (x, \llbracket y, z \rrbracket )   \big) \big\} + c.p. \nonumber \\
       & \qquad + l_3 ( n_0 (x), n_0 (y), n_0 (z)) - n_1 \big(   l_3 ( n_0 (x), n_0 (y), z) + l_3 ( n_0 (x), y, n_0 (z)) + l_3 (x, n_0 (y), n_0 (z)) \big) \nonumber  \\
       &\qquad \qquad \qquad + n_1^2 \big(  l_3 (n_0 (x), y, z) + l_3 (x, n_0 (y), z) + l_3 (x, y, n_0 (z))  \big) - n_1^3 ~ \! l_3 (x, y, z) = 0.
    \end{align}
\end{defn}


\noindent A {\bf $2$-term Nijenhuis $L_\infty$-algebra} is a pair $(\mathfrak{G}, \mathfrak{N}) = ( (\mathfrak{g}_1 \xrightarrow{d} \mathfrak{g}_0, \llbracket ~, ~ \rrbracket , l_3),   (n_0, n_1, n_2)  )$ of a $2$-term $L_\infty$-algebra equipped with a Nijenhuis operator on it.

\begin{defn}
    Let 
    \begin{align*}
    (\mathfrak{G}, \mathfrak{N}) = ( (\mathfrak{g}_1 \xrightarrow{d} \mathfrak{g}_0, \llbracket ~, ~ \rrbracket , l_3),   (n_0, n_1, n_2)  ) \quad  \text{ and } \quad (\mathfrak{G}', \mathfrak{N}') = ( (\mathfrak{g}'_1 \xrightarrow{d'} \mathfrak{g}'_0, \llbracket ~, ~ \rrbracket' , l'_3),   (n'_0, n'_1, n'_2)  )
    \end{align*}
\noindent be $2$-term Nijenhuis $L_\infty$-algebras. Then a {\bf Nijenhuis $L_\infty$-homomorphism} from the former one to the latter one is a tuple $\varphi = (\varphi_0, \varphi_1, \varphi_2, \varphi_3)$, where $(\varphi_0, \varphi_1, \varphi_2)$ is an $L_\infty$-homomorphism of the underlying $2$-term $L_\infty$-algebras from $(\mathfrak{g}_1 \xrightarrow{d} \mathfrak{g}_0, \llbracket ~, ~ \rrbracket, l_3)$ to $(\mathfrak{g}'_1 \xrightarrow{d'} \mathfrak{g}'_0, \llbracket ~, ~ \rrbracket', l'_3)$, and $\varphi_3: \mathfrak{g}_0 \rightarrow \mathfrak{g}_1'$ is a linear map satisfying
    \begin{align}
        \varphi_0 (n_0 (x)) - n_0' (\varphi_0 (x)) =~& d' (\varphi_3 (x)), \label{nijhomo1}\\
        \varphi_1 (n_1 (h)) - n_1' (\varphi_1 (h)) =~& \varphi_3 (d (h)), \label{nijhomo2}\\
        \varphi_1 (n_2 (x, y)) - n_2' ( \varphi_0 (x), \varphi_0 (y) ) =~& n_1' \big( \varphi_2 (n_0 (x), y) + \varphi_2 (x, n_0 (y)) - \varphi_3 ( \llbracket x, y \rrbracket )   \big) \nonumber \\
        + n_1'\big(  \llbracket \varphi_0 (x), \varphi_3 (y) \rrbracket' + \llbracket \varphi_3 (x), \varphi_0 (y) \rrbracket' - n_1' \varphi_2 (x, y)  \big) & + \varphi_3 \big(   \llbracket n_0 (x), y \rrbracket + \llbracket x, n_0 (y) \rrbracket - n_0 \llbracket x, y \rrbracket \big), \nonumber  \\
         - \varphi_2 ( n_0 (x), n_0 (y)) & - \llbracket \varphi_3 (x) , \varphi_3 (y) \rrbracket', \label{nijhomo3}
    \end{align}
    for all $x, y \in \mathfrak{g}_0$ and $h \in \mathfrak{g}_1$.
\end{defn}

With the above definitions, we have the following result.

\begin{prop}\label{prop-categ2}
    The collection of all $2$-term Nijenhuis $L_\infty$-algebras as objects and Nijenhuis $L_\infty$-homomorphisms between them is a category. (We denote this category by ${\sf 2Term Nij L_\infty}$).
\end{prop}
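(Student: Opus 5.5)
The plan mirrors the proof of Proposition \ref{prop-categ}: define composition and identities, then check that the composite satisfies the defining identities. Associativity and unit laws then follow from the formulas.

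Given Nijenhuis $L_\infty$-homomorphisms $\varphi=(\varphi_0,\varphi_1,\varphi_2,\varphi_3):(\mathfrak{G},\mathfrak{N})\to(\mathfrak{G}',\mathfrak{N}')$ and $\psi=(\psi_0,\psi_1,\psi_2,\psi_3):(\mathfrak{G}',\mathfrak{N}')\to(\mathfrak{G}'',\mathfrak{N}'')$, define $\psi\circ\varphi$ by
\begin{align*}
(\psi\circ\varphi)_0=\psi_0\varphi_0,\quad (\psi\circ\varphi)_1=\psi_1\varphi_1,\quad
(\psi\circ\varphi)_2(x,y)=\psi_2(\varphi_0(x),\varphi_0(y))+\psi_1(\varphi_2(x,y)),
\end{align*}
\begin{align*}
(\psi\circ\varphi)_3(x)=\psi_3(\varphi_0(x))+\psi_1(\varphi_3(x)).
\end{align*}
These are the $2$-term counterparts of the composite natural transformations in Proposition \ref{prop-categ}: the arrow parts add along the composite. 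The identity morphism is $(\mathrm{id},\mathrm{id},0,0)$.

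The triple $((\psi\circ\varphi)_0,(\psi\circ\varphi)_1,(\psi\circ\varphi)_2)$ is an $L_\infty$-homomorphism by the standard composition in ${\sf 2TermL_\infty}$, so only (\ref{nijhomo1})--(\ref{nijhomo3}) need checking.

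For (\ref{nijhomo1}), write $\psi_0\varphi_0n_0-n_0''\psi_0\varphi_0=\psi_0(\varphi_0n_0-n_0'\varphi_0)+(\psi_0n_0'-n_0''\psi_0)\varphi_0$. Applying (\ref{nijhomo1}) for $\varphi$ and $\psi$, together with the chain map property $\psi_0d'=d''\psi_1$, turns this into $d''((\psi\circ\varphi)_3(x))$.

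For (\ref{nijhomo2}), the same splitting works, using (\ref{nijhomo2}) for $\varphi$ and $\psi$.

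The main obstacle is (\ref{nijhomo3}), which is quadratic in $\varphi_3$, $\psi_3$ and involves $n_1'^2$. I would expand $\psi_1\varphi_1 n_2-n_2''(\psi_0\varphi_0\,\cdot,\psi_0\varphi_0\,\cdot)$ as
\begin{align*}
\psi_1\big(\varphi_1 n_2-n_2'(\varphi_0,\varphi_0)\big)+\big(\psi_1 n_2'-n_2''(\psi_0,\psi_0)\big)(\varphi_0,\varphi_0).
\end{align*}
Then I substitute (\ref{nijhomo3}) for $\varphi$ in the first summand and for $\psi$, evaluated at $\varphi_0(x),\varphi_0(y)$, in the second.

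Next I rewrite terms so that only $n_1''$ appears outside. The tools are (\ref{nijhomo2}) for $\psi$, in the form $\psi_1n_1'=n_1''\psi_1+\psi_3d'$; the second structure equation of $\psi$, $\psi_1\llbracket a,h\rrbracket'=\llbracket\psi_0a,\psi_1h\rrbracket''+\psi_2(a,d'h)$; and (\ref{nijhomo1}) for $\varphi$, which converts $\varphi_0n_0$ into $n_0'\varphi_0+d'\varphi_3$.

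The cross terms $\psi_2(d'\varphi_3x,\varphi_0y)$, $\psi_3(d'\cdot)$ and $\llbracket\psi_3\varphi_0x,\psi_1\varphi_3y\rrbracket''$ should cancel or combine into $-\llbracket(\psi\circ\varphi)_3x,(\psi\circ\varphi)_3y\rrbracket''$ and the $n_1''$-terms of the composite. The identity $\psi_2(a,d'h)$ coming from the $L_\infty$-homomorphism equations is what lets the $d'\varphi_3$ terms match.

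I would organise this bookkeeping by grouping terms by their number of $n$-factors, and check each group separately. Finally, associativity follows because the formulas for $(\cdot)_2$ and $(\cdot)_3$ are additive along composites, and the unit laws are immediate.
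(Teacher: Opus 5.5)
\paragraph{Verdict.} There is a genuine gap, and it cannot be closed: (\ref{nijhomo3}) as printed is not preserved by this composition.

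\paragraph{Where the argument breaks.} You use exactly the paper's composition law and identity morphisms, and your checks of (\ref{nijhomo1}) and (\ref{nijhomo2}) for $\psi\circ\varphi$ are correct. The failure is at the step you leave as ``should cancel or combine''. The paper's ``easy to verify'' passes over the same point.

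Put $\chi=\psi\circ\varphi$, $a=\varphi_0(x)$, $b=\varphi_0(y)$. Carry out precisely the rewrites you list, together with $\varphi_0\llbracket x,y\rrbracket=\llbracket a,b\rrbracket'+d'\varphi_2(x,y)$ and axioms (a), (b). Then everything outside the quadratic terms cancels, with one exception. The term $-\chi_2(n_0x,n_0y)=-\psi_2(\varphi_0n_0x,\varphi_0n_0y)-\psi_1\varphi_2(n_0x,n_0y)$ leaves a remainder against the term $-\psi_2(n_0'a,n_0'b)$ of $\psi$'s identity. So closure amounts to
\begin{align*}
\llbracket\chi_3x,\chi_3y\rrbracket''-\llbracket\psi_3a,\psi_3b\rrbracket''-\psi_1\llbracket\varphi_3x,\varphi_3y\rrbracket'
=-\psi_2(n_0'a,d'\varphi_3y)-\psi_2(d'\varphi_3x,n_0'b)-\psi_2(d'\varphi_3x,d'\varphi_3y).
\end{align*}
By (\ref{nijhomo1}), the right side equals $\psi_2(n_0'a,n_0'b)-\psi_2(\varphi_0n_0x,\varphi_0n_0y)$. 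The left side depends on how $\llbracket h,k\rrbracket'$ with $h,k\in\mathfrak{g}_1'$ is read:
\begin{itemize}
\item Read literally, a bracket of two degree-one elements is $0$ in a $2$-term $L_\infty$-algebra, so the left side vanishes.
\item Read as $\llbracket d'h,k\rrbracket'$, as in the proof of Theorem \ref{thmm-finall}, the left side is $\llbracket d''\psi_3a,\psi_1\varphi_3y\rrbracket''+\llbracket d''\psi_1\varphi_3x,\psi_3b\rrbracket''-\psi_2(d'\varphi_3x,d'\varphi_3y)$.
\end{itemize}
In neither reading does the identity hold in general.

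\paragraph{Counterexample (literal reading).} Let $\mathfrak g=\mathrm{span}(e_1,e_2,e_3)$ with $[e_1,e_2]=e_2$ the only nonzero bracket. Let $\mathfrak G=(\mathfrak g\xrightarrow{\mathrm{id}}\mathfrak g,[~,~],0)$, and write $(\mathfrak G,\mathrm{id})$ and $(\mathfrak G,0)$ for $\mathfrak G$ equipped with $(\mathrm{id},\mathrm{id},0)$ and $(0,0,0)$. Define $p,q$ by $p(e_1)=e_1$, $p(e_2)=0$, $p(e_3)=e_3$ and $q(e_1)=e_1$, $q(e_2)=0$, $q(e_3)=e_2$. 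Then
\begin{itemize}
\item $\varphi=(p,p,0,p)\colon(\mathfrak G,\mathrm{id})\to(\mathfrak G,0)$ satisfies every axiom (note $p$ is a Lie map with abelian image);
\item $\psi=(q,q,\psi_2,0)\colon(\mathfrak G,0)\to(\mathfrak G,0)$ with $\psi_2(u,v)=-[qu,qv]$ satisfies every axiom.
\end{itemize}
For $\chi$, however, the left side of (\ref{nijhomo3}) is $0$ while the right side is $[qpx,qpy]$, which equals $e_2$ at $(x,y)=(e_1,e_3)$.

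\paragraph{Counterexample (other reading).} Let $m$ be the projection onto $\mathbf{k}e_1$; it is a Nijenhuis operator with abelian image. Compose $(\mathrm{id},\mathrm{id},0,\mathrm{id})\colon(\mathfrak G,\mathrm{id})\to(\mathfrak G,0)$ with $(\mathrm{id},\mathrm{id},0,-m)\colon(\mathfrak G,0)\to(\mathfrak G,(m,m,0))$. Both factors satisfy (\ref{nijhomo3}) under this reading, but the composite fails at $(e_2,e_1)$.

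\paragraph{Source of the error and the fix.} The problem lies in translating (\ref{homo-nij}). The arrow part of $[F_3(x),F_3(y)]'$ is not just $[\overrightarrow{F_3(x)},\overrightarrow{F_3(y)}]'$; it also contains $[i_{N_0'F_0(x)},\overrightarrow{F_3(y)}]'+[\overrightarrow{F_3(x)},i_{N_0'F_0(y)}]'$. Hence $\llbracket\varphi_3x,\varphi_3y\rrbracket'$ in (\ref{nijhomo3}) should be replaced by
\[
\llbracket\varphi_0n_0x,\varphi_3y\rrbracket'+\llbracket\varphi_3x,n_0'\varphi_0y\rrbracket'.
\]
With this corrected term the computation closes:
\begin{itemize}
\item Applying $\psi_1\llbracket u,h\rrbracket'=\llbracket\psi_0u,\psi_1h\rrbracket''+\psi_2(u,d'h)$ to $-\psi_1$ of the $\varphi$-term produces precisely $\psi_2(n_0'a,n_0'b)-\psi_2(\varphi_0n_0x,\varphi_0n_0y)$.
\item The leftover brackets cancel by (\ref{nijhomo1}) for $\varphi$ and $\psi$, the chain-map property of $\psi$, and axiom (b).
\end{itemize}
Your plan, including associativity and the unit laws, then goes through.
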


\begin{proof}
As before, here also we only define the composition of homomorphisms and describe the identity homomorphisms. Let 
\begin{align*}
    \varphi = (\varphi_0, \varphi_1, \varphi_2, \varphi_3) : (\mathfrak{G}, \mathfrak{N}) \rightarrow (\mathfrak{G}', \mathfrak{N}') \quad  \text{ and } \quad  \psi = (\psi_0, \psi_1, \psi_2, \psi_3) : (\mathfrak{G}', \mathfrak{N}') \rightarrow (\mathfrak{G}'', \mathfrak{N}'')
\end{align*}
be two Nijenhuis $L_\infty$-homomorphisms. We define their composition $\psi \circ \varphi : (\mathfrak{G}, \mathfrak{N})  \rightarrow (\mathfrak{G}'' , \mathfrak{N}'' ) $ by taking 
\begin{align*}
    (\psi \circ \varphi)_0 := \psi_0 \circ \varphi_0, \quad &(\psi \circ \varphi)_1 := \psi_1 \circ \varphi_1, \quad 
    (\psi \circ \varphi)_2 (x, y) := \psi_2 ( \varphi_0 (x), \varphi_0 (y)) + \psi_1 (\varphi_2 (x, y)), \\
    &\qquad (\psi \circ \varphi)_3 (x) := \psi_3 ( \varphi_0 (x)) + \psi_1 (\varphi_3 (x)),
\end{align*}
for $x, y \in \mathfrak{g}_0$. Next, for any $2$-term Nijenhuis $L_\infty$-algebra $(\mathfrak{G}, \mathfrak{N}) = ( (\mathfrak{g}_1 \xrightarrow{d} \mathfrak{g}_0, \llbracket ~, ~ \rrbracket , l_3),   (n_0, n_1, n_2)  )$, we define the identity Nijenhuis $L_\infty$-homomorphism $1_{ (\mathfrak{G}, \mathfrak{N})} : (\mathfrak{G}, \mathfrak{N}) \rightarrow (\mathfrak{G}, \mathfrak{N})$ by taking
\begin{align*}
    (1_{ (\mathfrak{G}, \mathfrak{N})})_0 = \mathrm{Id}_{\mathfrak{g}_0}, \quad  (1_{ (\mathfrak{G}, \mathfrak{N})})_1 = \mathrm{Id}_{\mathfrak{g}_1}, \quad (1_{ (\mathfrak{G}, \mathfrak{N})})_2 = 0 ~~~~ \text{ and } ~~~~ (1_{ (\mathfrak{G}, \mathfrak{N})})_3 = 0.
\end{align*}
With the above structures, it is easy to verify that ${\sf 2Term Nij L_\infty}$ is a category.
\end{proof}

\begin{remark}
A $2$-term Nijenhuis $L_\infty$-algebra  $(\mathfrak{G}, \mathfrak{N}) = ( (\mathfrak{g}_1 \xrightarrow{d} \mathfrak{g}_0, \llbracket ~, ~ \rrbracket , l_3),   (n_0, n_1, n_2)  )$ is said to be {\bf strict} if $l_3 = 0$ and $n_2 = 0$. The collection of all strict $2$-term Nijenhuis $L_\infty$-algebras and Nijenhuis $L_\infty$-homomorphisms between them is also a category, denoted by ${\sf S2Term Nij L_\infty}$. Then it is easy to see that ${\sf S2Term Nij L_\infty}$ is a subcategory of ${\sf 2Term Nij L_\infty}$.
\end{remark}

We are now ready to prove the main result of this paper.

\begin{thm}\label{thmm-finall}
    The categories $ {\sf Nij Lie2}$ and ${\sf 2Term Nij L_\infty}$ are equivalent.
\end{thm}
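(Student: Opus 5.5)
We follow the Baez--Crans strategy: construct functors $S : {\sf 2Term Nij L_\infty} \rightarrow {\sf NijLie2}$ and $T : {\sf NijLie2} \rightarrow {\sf 2Term Nij L_\infty}$ extending the classical ones, and exhibit natural isomorphisms $S \circ T \cong 1_{\sf NijLie2}$ and $T \circ S \cong 1_{{\sf 2Term Nij L_\infty}}$. The underlying Lie $2$-algebra and $2$-term $L_\infty$ parts, and the natural isomorphisms on them, are exactly those of Baez--Crans. We only add the Nijenhuis data and check the extra compatibilities.

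\emph{Construction of $T$.} Given $((C,[~,~],\mathcal{J}),N,\mathcal{N})$, put $\mathfrak{g}_0 = C_0$ and $\mathfrak{g}_1 = \ker(s)$, with $d = t|_{\ker s}$. The bracket is $\llbracket x,y\rrbracket=[x,y]$ on objects and $\llbracket x,h\rrbracket=[i_x,h]$. We set $l_3(x,y,z)=\overrightarrow{\mathcal{J}_{x,y,z}}$. For the operator we take $n_0=N_0$, $n_1=N_1|_{\ker s}$ (well defined since $N$ commutes with $s$), and $n_2(x,y) = -\overrightarrow{\mathcal{N}_{x,y}}$, choosing the sign so that $d\,n_2(x,y)$ equals the target minus the source of $\mathcal{N}_{x,y}$, as required by \eqref{homo-r1}.

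\emph{Verification for $T$.} Identity \eqref{homo-r2} is the naturality of $\mathcal{N}$ with respect to the morphism $h$ in the second slot. Here we use that the composite of morphisms in a $2$-vector space is computed by adding arrow parts. Identity \eqref{homo-r3} is obtained by taking arrow parts of the two composite paths in the diagram \eqref{big-diag} and equating them. Each arrow contributes its arrow part: $[\mathcal{N},1]$ gives $\llbracket n_2, \cdot\rrbracket$, $\mathcal{N}_{[\cdot,\cdot],\cdot}$ gives $n_2(\cdot,\cdot)$, $N_1\mathcal{J}$ gives $n_1 l_3$, and so on. On morphisms, $T(F_0,F_1,F_2,F_3)=(F_0,F_1|_{\ker s},\overrightarrow{F_2},\overrightarrow{F_3})$ up to the same sign convention. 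Relations \eqref{nijhomo1} and \eqref{nijhomo2} encode the source/target and naturality of $F_3$, and \eqref{nijhomo3} is the arrow part of diagram \eqref{homo-nij}.

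\emph{Construction of $S$.} Conversely, given $(\mathfrak{G},\mathfrak{N})$, form $C_0=\mathfrak{g}_0$ and $C_1=\mathfrak{g}_0\oplus\mathfrak{g}_1$, with $s(x,h)=x$, $t(x,h)=x+dh$, and the Baez--Crans bracket and Jacobiator $\mathcal{J}_{x,y,z}=([[x,y],z], l_3(x,y,z))$. We then define:
\begin{align*}
N_0 = n_0, \quad N_1(x,h)=(n_0x,n_1h), \quad \mathcal{N}_{x,y}=\big(\llbracket n_0x,n_0y\rrbracket, -n_2(x,y)\big).
\end{align*}
Functoriality of $N$ follows from $n_0 d = d n_1$. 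Well-definedness of the target of $\mathcal{N}$ is \eqref{homo-r1}, and naturality of $\mathcal{N}$ is \eqref{homo-r2}. Commutativity of \eqref{big-diag} reduces, after comparing arrow parts, to \eqref{homo-r3}. On morphisms we set $F_3(x)=(n_0'\varphi_0x,\,-\varphi_3(x))$ (sign matched to the above), and \eqref{nijhomo1}--\eqref{nijhomo3} give the naturality and diagram \eqref{homo-nij}. Functoriality of $S$ and $T$, i.e.\ compatibility with the compositions of Propositions \ref{prop-categ} and \ref{prop-categ2}, is immediate, since both compositions add the $2$- and $3$-components after applying $\psi_1$ or $G_1$.

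\emph{The natural isomorphisms.} $T\circ S$ is the identity on the nose: $\ker s \cong \mathfrak{g}_1$, and the arrow parts return $l_3$, $n_2$ and $\varphi_3$. For $S\circ T$, the Baez--Crans isomorphism $C_1\to C_0\oplus\ker s$, $f\mapsto(s(f),\overrightarrow{f})$, together with the identity on objects, gives an isomorphism of Lie $2$-algebras with trivial $F_2$. We take its $F_3$ to be the identity as well and check that diagram \eqref{homo-nij} commutes, which holds because this isomorphism carries $\mathcal{N}_{x,y}$ to $(s(\mathcal{N}_{x,y}),\overrightarrow{\mathcal{N}_{x,y}})$. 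Naturality in the object is inherited from the Baez--Crans argument plus the evident compatibility with $F_3$.

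\emph{Main obstacle.} The main difficulty is the translation between \eqref{big-diag} and \eqref{homo-r3}. One must check that the sum of arrow parts along the left path minus that along the right path is exactly the left side of \eqref{homo-r3}, with the correct signs and with the cyclic permutations arising from antisymmetry. We will handle this by decomposing every arrow of \eqref{big-diag} as an identity plus its arrow part. We then use the rule that the composite of composable morphisms $f, g$ is $f+\overrightarrow{g}$, and collect terms by type ($n_2$-terms, $\llbracket n_0\cdot,n_2\rrbracket$-terms, and $n_1^k l_3$-terms). We also need to fix the sign convention for $n_2$ consistently from the outset so that \eqref{homo-r1} and \eqref{homo-r3} match simultaneously.
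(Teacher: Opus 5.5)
\textbf{The approach matches the paper, but the sign convention is wrong.} Your overall plan is the paper's proof: the Baez--Crans correspondence, with the Nijenhuis data transported by taking arrow parts, and the same natural isomorphisms. Your $T$ and $S$ are the paper's ${\sf S}$ and ${\sf T}$. However, the sign you fix for the new data is wrong, and with it your constructions fail as stated.

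Since $\overrightarrow{f} = f - i_{s(f)}$ and $d = t|_{\ker s}$, we have $d(\overrightarrow{f}) = t(f) - s(f)$. The left side of \eqref{homo-r1} is exactly $t(\mathcal{N}_{x,y}) - s(\mathcal{N}_{x,y})$, which is what you say you want. So \eqref{homo-r1} requires $n_2(x,y) = +\overrightarrow{\mathcal{N}_{x,y}}$, as in the paper. Your choice $n_2 = -\overrightarrow{\mathcal{N}_{x,y}}$ gives $d\,n_2(x,y) = s(\mathcal{N}_{x,y}) - t(\mathcal{N}_{x,y})$ instead.

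In the other direction, a pair $(a,h)$ is a morphism $a \to a + dh$. So your $\mathcal{N}_{x,y} = (\llbracket n_0x, n_0y\rrbracket, -n_2(x,y))$ has target $2\llbracket n_0 x, n_0 y \rrbracket - n_0(\llbracket n_0x,y\rrbracket + \llbracket x, n_0y\rrbracket - n_0\llbracket x,y\rrbracket)$. It is therefore not a morphism into $N_0[N_0(x),y] + N_0[x,N_0(y)] - N_0^2[x,y]$ at all. The same failure occurs for $\varphi_3$ against \eqref{nijhomo1}, and for your $F_3(x) = (n_0'\varphi_0(x), -\varphi_3(x))$, whose target is $2n_0'\varphi_0(x) - \varphi_0 n_0(x)$ rather than $\varphi_0 n_0(x)$.

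\textbf{The error cannot be absorbed elsewhere.} You keep $l_3 = \overrightarrow{\mathcal{J}}$ (and $\varphi_2 = \overrightarrow{F_2}$) with the Baez--Crans plus signs. Flipping only $n_2$ then also breaks \eqref{homo-r2}: naturality of $\mathcal{N}$ gives its left side as $+\overrightarrow{\mathcal{N}_{x,dh}}$. It breaks \eqref{homo-r3} as well, because the terms linear in $n_2$ there must balance the $n_1^k l_3$-terms. Your round-trip checks for $T\circ S$ and $S \circ T$ hide the problem only because the two sign flips cancel.

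\textbf{The fix.} Drop the minus signs: take $n_2(x,y) = \overrightarrow{\mathcal{N}_{x,y}}$, $\varphi_3(x) = \overrightarrow{F_3(x)}$, $\mathcal{N}_{x,y} = (\llbracket n_0x,n_0y\rrbracket, n_2(x,y))$ and $F_3(x) = (n_0'\varphi_0(x), \varphi_3(x))$. With these, your argument coincides with the paper's.
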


\begin{proof}
  Let $((C, [~, ~], \mathcal{J}), N, \mathcal{N})$ be a Nijenhuis Lie $2$-algebra. Since $(C, [~, ~], \mathcal{J})$ is a Lie $2$-algebra, it follows from \cite{baez-crans} that $(\mathfrak{g}_1 \xrightarrow{d} \mathfrak{g}_0, \llbracket ~, ~ \rrbracket , l_3)$ is a $2$-term $L_\infty$-algebra, where
  \begin{align}
      \mathfrak{g}_0 = C_0, \quad \mathfrak{g}_1 = \mathrm{ker} (s) \subset C_1, &\quad d(h) : = t(h), \quad \llbracket x, y \rrbracket := [x, y], \quad 
      \llbracket x, h \rrbracket = - \llbracket h , x \rrbracket := [i_x, h ] \nonumber \\ 
      ~~~ \text{ and } ~~~& l_3 (x, y, z) := \overrightarrow{\mathcal{J}_{x, y, z}}, \label{str-opr}
  \end{align}
  for $x, y, z \in \mathfrak{g}_0$ and $h \in \mathfrak{g}_1$. Next, from the given Nijenhuis operator $(N, \mathcal{N})$ on the Lie $2$-algebra, we define two linear maps $n_0 : \mathfrak{g}_0 \rightarrow \mathfrak{g}_0$ and $n_1 : \mathfrak{g}_1 \rightarrow \mathfrak{g}_1$, and an antisymmetric bilinear map $n_2 : \mathfrak{g}_0 \times \mathfrak{g}_0 \rightarrow \mathfrak{g}_1$ by
  \begin{align}
      n_0 (x) := N_0 (x), \quad n_1 (h) := N_1 (h) ~~~~ \text{ and } ~~~~ n_2 (x, y) := \overrightarrow{ \mathcal{N}_{x, y}}, \label{str-opr2}
  \end{align}
  for $x, y \in \mathfrak{g}_0$ and $h \in \mathfrak{g}_1$. For any $x, y \in \mathfrak{g}_0$, we observe that
  \begin{align*}
     & n_0 (  \llbracket n_0 (x), y \rrbracket + \llbracket x, n_0 (y) \rrbracket - n_0 \llbracket x, y \rrbracket  ) - \llbracket n_0 (x), n_0 (y) \rrbracket \\
      &= N_0 ( [ N_0 (x), y] + [x, N_0 (y)] - N_0 [x, y]) - [N_0 (x), N_0 (y)] \\
      &\stackrel{(\ref{cal-n})}= (t-s)~  \mathcal{N}_{x, y} = t ~ \overrightarrow{ \mathcal{N}_{x, y}} = d (n_2 (x, y)).
  \end{align*}
  Hence, the identity (\ref{homo-r1}) holds. Next, let $x \in \mathfrak{g}_0$ and $h \in \mathfrak{g}_1 = \mathrm{ker}(s)$. Then $h$ must be of the form $h = \overrightarrow{f}$, for some morphism $f: y \rightarrow z$. From the naturality of $\mathcal{N}$, we have
  \begin{align*}
      \mathcal{N}_{x, y} \circ  (N_1 [ i_{N_0 (x)}, f] + N_1 [i_x, N_1 (f)] - N_1^2 [i_x, f] )   =   [i_{N_0 (x)}, N_1 (f)] \circ \mathcal{N}_{x, z}.
  \end{align*}
  By considering the arrow parts of both sides of the above equation, we obtain
  \begin{align*}
        \overrightarrow{  \mathcal{N}_{x, y}} + \overrightarrow{  N_1 [ i_{N_0 (x)}, f] } + \overrightarrow{  N_1 [i_x, N_1 (f)] } - \overrightarrow{ N_1^2 [i_x, f] }  =   \overrightarrow{  [i_{N_0 (x)}, N_1 (f)] } + \overrightarrow{ \mathcal{N}_{x, z} },
  \end{align*}
  i.e.,
  \begin{align*}
      N_1 \big( [i_{N_0 (x)}, \overrightarrow{f}] + [i_x, N_1 (\overrightarrow{f}) ] - N_1 [i_x, \overrightarrow{f}] \big) - [ i_{N_0 (x)} , N_1 ( \overrightarrow{f})] = \overrightarrow{ \mathcal{N}_{x, z-y}}.
  \end{align*}
  This implies that
  \begin{align*}
      n_1 \big( \llbracket n_0 (x), \overrightarrow{f} \rrbracket + \llbracket x, n_1 (\overrightarrow{f}) \rrbracket - n_1 \llbracket x, \overrightarrow{f} \rrbracket  \big) - \llbracket n_0 (x), n_1 (\overrightarrow{f}) \rrbracket = n_2 (x, d (\overrightarrow{f}) ),
  \end{align*}
which verifies the identity (\ref{homo-r2}). Finally, the diagram given in (\ref{big-diag}) is equivalent to
\begin{align*}
 & \big\{ [ 1_{N_0 (x)}, \overrightarrow{\mathcal{N}_{y, z} }] + \overrightarrow{ \mathcal{N}_{x, [N_0 (y), z]+ [y, N_0 (z)]- N_0 [y, z]} } - N_1 \big(  [1_x, \overrightarrow{\mathcal{N}_{y, z}}] + \overrightarrow{\mathcal{N}_{x, [y, z]} }  \big) \big\} + c. p. \\
  &\qquad + \overrightarrow{ \mathcal{J}_{N_0 (x), N_0 (y), N_0 (z)}} - N_1 \big(  \overrightarrow{ \mathcal{J}_{N_0 (x), N_0 (y), z}} + \overrightarrow{ \mathcal{J}_{N_0 (x), y, N_0 (z)}} + \overrightarrow{ \mathcal{J}_{x, N_0 (y), N_0 (z)} }  \big) \\
  &\qquad \qquad + N_1^2 \big(  \overrightarrow{ \mathcal{J}_{N_0 (x), y, z}} + \overrightarrow{ \mathcal{J}_{x, N_0 (y), z}} + \overrightarrow{ \mathcal{J}_{x, y, N_0 (z)}}  \big) - N_1^3 \big(  \overrightarrow{ \mathcal{J}_{x, y, z}}  \big) =0.
\end{align*}
Therefore, the identity (\ref{homo-r3}) also holds. This shows that $(n_0, n_1, n_2)$ is a Nijenhuis operator on the $2$-term $L_\infty$-algebra $(\mathfrak{g}_1 \xrightarrow{d} \mathfrak{g}_0, \llbracket ~, ~ \rrbracket , l_3)$ constructed above. In other words, $((\mathfrak{g}_1 \xrightarrow{d} \mathfrak{g}_0, \llbracket ~, ~ \rrbracket , l_3) , (n_0, n_1, n_2))$ is a $2$-term Nijenhuis $L_\infty$-algebra.

Next, let $F = (F_0, F_1, F_2, F_3) : ((C, [~, ~], \mathcal{J}), N, \mathcal{N}) \rightarrow ((C', [~, ~]', \mathcal{J}'), N', \mathcal{N}')$ be a homomorphism between Nijenhuis Lie $2$-algebras. Since $(F_0, F_1, F_2) : (C, [~, ~], \mathcal{J}) \rightarrow (C', [~, ~]', \mathcal{J}')$ is a homomorphism between the underlying Lie $2$-algebras, it follows from \cite{baez-crans} that the triple
\begin{align*}
     (\varphi_0, \varphi_1, \varphi_2) : (\mathfrak{g}_1 \xrightarrow{d} \mathfrak{g}_0, \llbracket ~, ~ \rrbracket , l_3) \rightarrow (\mathfrak{g}'_1 \xrightarrow{d'} \mathfrak{g}'_0, \llbracket ~, ~ \rrbracket' , l'_3)
\end{align*}
is an $L_\infty$-homomorphism between the corresponding $2$-term $L_\infty$-algebras, where
\begin{align*}
    \varphi_0 (x) := F_0 (x), \quad \varphi_1 (h) := F_1 |_{\mathrm{ker} (s)} (h) ~~~~ \text{ and } ~~~~ \varphi_2 (x, y) := \overrightarrow{ F_2 (x, y)},
\end{align*}
for $x, y \in \mathfrak{g}_0$ and $h \in \mathfrak{g}_1$. Next, we define a map $\varphi_3 : \mathfrak{g}_0 \rightarrow \mathfrak{g}_1'$ by
\begin{align*}
     \varphi_3 (x) := \overrightarrow{ F_3 (x)}, \text{ for } x \in \mathfrak{g}_0.
\end{align*}
For any $x \in \mathfrak{g}_0$, we see that
\begin{align*}
    \varphi_0 (n_0 (x)) - n_0' (\varphi_0 (x)) =~& F_0 (N_0 (x)) - N_0'(F_0 (x))\\
    \stackrel{(\ref{f-thr})}=~& (t' - s') F_3 (x) = t' ~ \! \overrightarrow{ F_3 (x)} = d' (\varphi_3 (x)).
\end{align*}
Next, take $h \in \mathfrak{g}_1 = \mathrm{ker} (s)$. As before, let $h = \overrightarrow{f}$, for some morphism $f: y \rightarrow z$. Note that the naturality of $F_3$ implies that
\begin{align*}
   F_3 (y) \circ F_1 (N_1 (f))  = N_1'(F_1 (f)) \circ F_3 (z) .
\end{align*}
By taking the arrow parts of both sides, we obtain
\begin{align*}
    \overrightarrow{ F_3 (y)} +  F_1 (N_1 (\overrightarrow{f}))  =  N_1'(F_1 (\overrightarrow{f})) + \overrightarrow{F_3 (z)} .
\end{align*}
This shows that
\begin{align*}
    F_1 (N_1 ( \overrightarrow{ f })) - N_1'(F_1 ( \overrightarrow{f})) = \overrightarrow{F_3 (z-y)}, \quad  \text{ i.e., ~} ~~~ ~ \varphi_1 (n_1 (\overrightarrow{f})) - n_1' (\varphi_1 ( \overrightarrow{f})) = \varphi_3 ( d ( \overrightarrow{f})).
\end{align*}
Hence, both the identities (\ref{nijhomo1}) and (\ref{nijhomo2}) hold. Finally, the diagram given in Definition \ref{defi-homo-rbl2} is equivalent to 
\begin{align*}
\big( \overrightarrow{F_3 [N_0 (x), y]} ~+~& \overrightarrow{F_3 [x, N_0 (y)]} - \overrightarrow{F_3 (N_0 [x, y])} \big) + \big(  N_1' \overrightarrow{F_2 (N_0 (x), y)} + N_1' \overrightarrow{F_2 (x, N_0 (y))} - N_1' \overrightarrow{ F_3 [x, y]}   \big) \\
&+ N_1' \big(  [ \overrightarrow{ F_3 (x)}, i_{F_0 (y)}]' + [ i_{F_0 (x)}, \overrightarrow{ F_3 (y)}]'- N_1' \overrightarrow{ F_2 (x, y) }  \big) + \overrightarrow{ \mathcal{N}'_{F_0 (x), F_0 (y)} } \\
&\qquad \qquad = \overrightarrow{ F_1 (\mathcal{N}_{x, y})} + \overrightarrow{ F_2 (N_0 (x), N_0 (y))} + [ \overrightarrow{F_3 (x)}, \overrightarrow{F_3 (y)}]', 
\end{align*}
which shows the identity (\ref{nijhomo3}). This concludes that 
\begin{align*}
    (\varphi_0, \varphi_1, \varphi_2, \varphi_3) : ((\mathfrak{g}_1 \xrightarrow{d} \mathfrak{g}_0, \llbracket ~, ~ \rrbracket , l_3) , (n_0, n_1, n_2)) \rightarrow ((\mathfrak{g}'_1 \xrightarrow{d'} \mathfrak{g}'_0, \llbracket ~, ~ \rrbracket' , l'_3) , (n'_0, n'_1, n'_2))
\end{align*}
is a Nijenhuis $L_\infty$-homomorphism. Finally, the above construction preserves the identity homomorphisms and composition of homomorphisms. Thus, we obtain a functor ${\sf S}: {\sf NijLie2} \rightarrow {\sf S2Term Nij L_\infty}$.

\medskip

Our next aim is to construct a functor in the other direction. Suppose we start with a $2$-term Nijenhuis $L_\infty$-algebra $((\mathfrak{g}_1 \xrightarrow{d} \mathfrak{g}_0, \llbracket ~, ~ \rrbracket , l_3) , (n_0, n_1, n_2))$. Since $(\mathfrak{g}_1 \xrightarrow{d} \mathfrak{g}_0, \llbracket ~, ~ \rrbracket , l_3)$ is a $2$-term $L_\infty$-algebra, it follows from \cite{baez-crans} that $(C = (C_1 \rightrightarrows C_0), [~, ~], \mathcal{J})$ is a Lie $2$-algebra, where $C_0 = \mathfrak{g}_0$ and $C_1 = \mathfrak{g}_0 \oplus \mathfrak{g}_1$. The source, target, identity-assigning map and the composition map are respectively given by
\begin{align*}
    s (x, h) := x, ~~~ t (x, h) := x + d(h), ~~~i_x := (x, 0) ~~~ \text{ and }\\
    (x, h) \circ (y, k) = (x, h+ k ), \text{ when } t (x, h) = s (y, k).
\end{align*}
The antisymmetric bilinear functor $[~, ~ ] : C \times C \rightarrow C$ is given by 
\begin{align}\label{stru1}
    [x, y] := \llbracket x, y \rrbracket ~~~ \text{ and } ~~~ [(x, h), (y, k)] := ( \llbracket x, y \rrbracket, \llbracket x, k \rrbracket - \llbracket y, h \rrbracket + \llbracket dh , k \rrbracket ),
\end{align}
for objects $x, y \in C_0$ and morphisms $(x, h), (y, k) \in C_1$. Moreover, the Jacobiator for $C$ is given by
\begin{align}\label{stru2}
    \mathcal{J}_{x, y, z} := ( [[x, y], z] ~ \! , ~ \! l_3 (x, y, z)). 
\end{align}
Next, from the Nijenhuis operator $(n_0, n_1, n_2)$ on the $2$-term $L_\infty$-algebra $(\mathfrak{g}_1 \xrightarrow{d} \mathfrak{g}_0, \llbracket ~, ~ \rrbracket , l_3)$, we define a linear functor $N = (N_0, N_1) : C \rightarrow C$ by
\begin{align}\label{stru3}
    N_0 (x) := n_0 (x) ~~~~ \text{ and } ~~~~ N_1 (x, h) := (n_0 (x), n_1 (h)),
\end{align}
for $x \in C_0$ and $(x, h) \in C_1$. Moreover, we set a natural isomorphism 
\begin{align}\label{stru4}
   & \mathcal{N}_{x, y} : [ N_0 (x), N_0 (y)] \rightarrow N_0 [ N_0 (x), y] + N_0 [x, N_0 (y)] - N_0^2 [x, y] ~~~ \text{ by } \nonumber\\
    & \qquad \qquad \qquad \mathcal{N}_{x, y} : = (  [ N_0 (x), N_0 (y)] ~ \! , ~ \! n_2 (x, y)  ).
\end{align}
Then it turns out that the pair $(N, \mathcal{N})$ is a Nijenhuis operator on the Lie $2$-algebra $(C, [~, ~], \mathcal{J})$ constructed above. In other words, $((C, [~, ~], \mathcal{J}), N, \mathcal{N} )$ is a Nijenhuis Lie $2$-algebra.

Next, let $(\varphi_0, \varphi_1, \varphi_2, \varphi_3) : ((\mathfrak{g}_1 \xrightarrow{d} \mathfrak{g}_0, \llbracket ~, ~ \rrbracket , l_3) , (n_0, n_1, n_2)) \rightarrow ((\mathfrak{g}'_1 \xrightarrow{d'} \mathfrak{g}'_0, \llbracket ~, ~ \rrbracket' , l'_3) , (n'_0, n'_1, n'_2))$ be a Nijenhuis $L_\infty$-homomorphism. Since $(\varphi_0, \varphi_1, \varphi_2) : (\mathfrak{g}_1 \xrightarrow{d} \mathfrak{g}_0, \llbracket ~, ~ \rrbracket , l_3) \rightarrow (\mathfrak{g}'_1 \xrightarrow{d'} \mathfrak{g}'_0, \llbracket ~, ~ \rrbracket' , l'_3)$ is an $L_\infty$-homomorphism, it follows that the triple $(F_0, F_1, F_2)$ is a homomorphism between the corresponding Lie $2$-algebras, where
\begin{align*}
  &  F_0 : \mathfrak{g}_0 \rightarrow \mathfrak{g}_0' ~~~ \text{ is given by } ~~~ F_0 (x) := \varphi_0 (x),\\
  &  F_1 : \mathfrak{g}_0 \oplus \mathfrak{g}_1 \rightarrow \mathfrak{g}'_0 \oplus \mathfrak{g}'_1 ~~~ \text{ is given by } ~~~~ F_1 (x, h) := (\varphi_0 (x), \varphi_1 (h)), \\
  &  F_2 (x, y) : [F_0 (x), F_0 (y)]' \rightarrow F_0 [x, y]  ~~~ \text{ is given by } ~~~ F_2 (x, y) :=  \big(  [ \varphi_0 (x), \varphi_0 (y)]' ~ \! , ~ \! \varphi_2 (x, y)  \big).
\end{align*}
We also define a natural transformation $F_3 (x) : N_0'(F_0 (x)) \rightarrow F_0 (N_0 (x))$ by
\begin{align*}
    F_3 (x) := (   N_0'(F_0 (x)) ~ \! , ~ \! \varphi_3 (x) ).
\end{align*}
Note that the naturality of $F_3$ follows from the condition (\ref{nijhomo2}). The identity (\ref{nijhomo3}) ensures the commutativity of the diagram given in Definition \ref{defi-homo-rbl2}. Hence, the quadruple $(F_0, F_1, F_2, F_3) : (C, N, \mathcal{N}) \rightarrow (C', N', \mathcal{N}')$ is a homomorphism of Nijenhuis Lie $2$-algebras. Finally, the above construction preserves the identity homomorphisms and composition of homomorphisms. Hence, there is a functor ${\sf T}: {\sf 2Term Nij L_\infty} \rightarrow {\sf NijLie 2}$.

\medskip

It remains to show the existence of natural isomorphisms $\alpha : {\sf TS } \Rightarrow 1_{  {\sf NijLie 2} }$ and $\beta : {\sf ST} \Rightarrow 1_{ {\sf 2Term Nij L_\infty} }$.  Let $((C, [~,~], \mathcal{J}), N, \mathcal{N})$ be a Nijenhuis Lie $2$-algebra. By applying the functor ${\sf S}$, we get that
\begin{align*}
    {\sf S} (C, N, \mathcal{N}) = ((\mathfrak{g}_1 \xrightarrow{d} \mathfrak{g}_0, \llbracket ~, ~ \rrbracket , l_3) , (n_0, n_1, n_2))
\end{align*}
is a $2$-term Nijenhuis $L_\infty$-algebra, where all the structure operations are given in (\ref{str-opr}), (\ref{str-opr2}). Next, by applying the functor ${\sf T}$ to this $2$-term Nijenhuis $L_\infty$-algebra, we obtain a new Nijenhuis Lie $2$-algebra, say $((C', [~,~]', \mathcal{J}'), N', \mathcal{N}')$. Then we have
\begin{align*}
    &C_0' = \mathfrak{g}_0 = C_0, \quad C_1' = \mathfrak{g}_0 \oplus \mathfrak{g}_1 = C_0 \oplus \mathrm{ker}(s),\\
    &s' (x, h) = x, \quad t' (x, h) = x + d(h) = x + t (h), \quad i'_x = (x, 0), \\
    &[x, y]' = \llbracket x, y \rrbracket = [x, y], \quad [(x, h), (y, k)]' = ( [x, y], [i_x ,k] - [i_y, h] + [ i_{t(h)}, k]),\\
    &N_0' (x) = N_0 (x), \quad N_1' (x, h) = (N_0 (x), N_1 (h)) ~~~ \text{ and } ~~~ \mathcal{N}'_{x, y} = ( [N_0 (x), N_0 (y)] ~ \! , ~ \! n_2 (x, y)) = \mathcal{N}_{x, y},
\end{align*}
for $x, y \in C_0' = C_0$ and $(x, h), (y, k) \in C_1' = C_0 \oplus \mathrm{ker}(s)$. It is easy to see that
\begin{align*}
    \alpha_C := ( (\alpha_C)_0, (\alpha_C)_1, (\alpha_C)_2, (\alpha_C)_3 ) : ((C', [~,~]', \mathcal{J}'), N', \mathcal{N}') \rightarrow ((C, [~,~], \mathcal{J}), N, \mathcal{N})
\end{align*}
is an isomorphism of Nijenhuis Lie $2$-algebras, where $(\alpha_C)_0 (x) := x$, $(\alpha_C)_1 (x, h) := i_x + h$ and $(\alpha_C)_2, (\alpha_C)_3$ are defined to be the identity maps. This leads to a natural isomorphism $\alpha : {\sf TS } \Rightarrow 1_{\sf NijLie2}$.

On the other hand, let $(\mathfrak{G}, \mathfrak{N}) = ((\mathfrak{g}_1 \xrightarrow{d} \mathfrak{g}_0, \llbracket ~, ~ \rrbracket , l_3) , (n_0, n_1, n_2))$ be a $2$-term Nijenhuis $L_\infty$-algebra. After applying the functor ${\sf T}$, we assume that
\begin{align*}
    {\sf T} (\mathfrak{G}, \mathfrak{N}) = ((C, [~,~], \mathcal{J}), N, \mathcal{N})
\end{align*}
is a Nijenhuis Lie $2$-algebra, where the structure operations are given in (\ref{stru1})-(\ref{stru4}). By further applying the functor ${\sf S}$ to this Nijenhuis Lie $2$-algebra, we obtain a $2$-term Nijenhuis $L_\infty$-algebra, say $(\mathfrak{G}' , \mathfrak{N}')$. It is not hard to see that $(\mathfrak{G}' , \mathfrak{N}')$ is exactly same with $(\mathfrak{G}, \mathfrak{N})$. Hence, we have the identity isomorphism
\begin{align*}
    \beta_\mathfrak{G} = (     (\beta_\mathfrak{G})_0,  (\beta_\mathfrak{G})_1,  (\beta_\mathfrak{G})_2,  (\beta_\mathfrak{G})_3)  :  (\mathfrak{G}' , \mathfrak{N}') \rightarrow (\mathfrak{G} , \mathfrak{N})
\end{align*}
of $2$-term Nijenhuis $L_\infty$-algebras, where $(\beta_\mathfrak{G})_0 (x) = x$, $(\beta_\mathfrak{G})_1 (h) = h$ and $(\beta_\mathfrak{G})_2, (\beta_\mathfrak{G})_3$ are trivial maps. This yields a natural isomorphism $\beta : {\sf ST} \Rightarrow 1_{ {\sf 2Term Nij L_\infty} }$. Hence, the proof is done.
\end{proof}

\medskip

Inspired by the above result, one may also prove the following.

\begin{corollary}
      The category $ {\sf S NijLie2}$ of strict Nijenhuis Lie $2$-algebras and the category ${\sf S2Term Nij L_\infty}$ of strict $2$-term Nijenhuis $L_\infty$-algebras are equivalent.
\end{corollary}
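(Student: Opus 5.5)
The plan is to restrict the functors ${\sf S} : {\sf NijLie2} \rightarrow {\sf 2Term Nij L_\infty}$ and ${\sf T} : {\sf 2Term Nij L_\infty} \rightarrow {\sf NijLie2}$ from the proof of Theorem \ref{thmm-finall} to the full subcategories of strict objects. Both ${\sf SNijLie2}$ and ${\sf S2Term Nij L_\infty}$ are defined as full subcategories: the morphisms are all homomorphisms between strict objects. It therefore suffices to check that ${\sf S}$ and ${\sf T}$ send strict objects to strict objects. The natural isomorphisms $\alpha$ and $\beta$ then restrict automatically, since their components are morphisms between strict objects and naturality is inherited.

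First, let $((C,[~,~],\mathcal{J}),N,\mathcal{N})$ be strict, so $\mathcal{J}_{x,y,z}$ and $\mathcal{N}_{x,y}$ are identity morphisms. By (\ref{str-opr}) and (\ref{str-opr2}) we get
\[
l_3(x,y,z)=\overrightarrow{\mathcal{J}_{x,y,z}} = i_{[[x,y],z]} - i_{[[x,y],z]} = 0,
\]
and likewise $n_2(x,y)=\overrightarrow{\mathcal{N}_{x,y}}=0$. Here I use that the arrow part of an identity $i_a$ is $i_a - i_{s(i_a)}=0$. Hence ${\sf S}$ of a strict object is strict. Conversely, if $l_3=0$ and $n_2=0$, then (\ref{stru2}) and (\ref{stru4}) give $\mathcal{J}_{x,y,z}=([[x,y],z],0)=i'_{[[x,y],z]}$ and $\mathcal{N}_{x,y}=([N_0(x),N_0(y)],0)=i'_{[N_0(x),N_0(y)]}$. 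These are identities in the Lie $2$-algebra built by ${\sf T}$, so ${\sf T}$ of a strict object is strict.

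The only subtle point is that the source and target of $\mathcal{J}$ and $\mathcal{N}$ must coincide for them to be identities. This holds automatically, because $d\,l_3=0$ and $d\,n_2=0$ force the defining identities (c) and (\ref{homo-r1}) to hold on the nose. I do not expect a real obstacle: the corollary follows formally once closure under ${\sf S}$ and ${\sf T}$ is checked. The components of $\alpha_C$ and $\beta_{\mathfrak G}$ are morphisms of the respective full subcategories, which gives the equivalence.
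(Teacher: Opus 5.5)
Your proposal is correct and is the argument the paper intends. The paper states the corollary without proof as a consequence of Theorem \ref{thmm-finall}, and restricting ${\sf S}$, ${\sf T}$, $\alpha$, $\beta$ to the full subcategories of strict objects is all that is needed, after checking, as you do, that $l_3=\overrightarrow{\mathcal{J}}$ and $n_2=\overrightarrow{\mathcal{N}}$ vanish exactly when $\mathcal{J}$ and $\mathcal{N}$ are identities (and conversely via (\ref{stru2}) and (\ref{stru4})); your remark that $d\,l_3=0$ and $d\,n_2=0$ force the sources and targets to coincide makes explicit a point the paper leaves implicit.
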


\medskip

\section{2-representations and 2-term representations up to homotopy}\label{sec5}

In this section, we first introduce the notion of $2$-representation (i.e., categorified representation) of a Nijenhuis Lie algebra and show that the corresponding semidirect product inherits a Nijenhuis Lie $2$-algebra structure. Subsequently, we also consider $2$-term representation up to homotopy (i.e., $2$-term homotopified representation) of a Nijenhuis Lie algebra and find a $2$-term Nijenhuis $L_\infty$-algebra as the corresponding semidirect product structure. Our final result shows that, given a Nijenhuis Lie algebra, the category of $2$-representations and the category of $2$-term representations up to homotopy are equivalent.

\medskip

    Let $( (\mathfrak{g}, [~, ~ ]), n)$ be a Nijenhuis Lie algebra. First, recall that \cite{das,song} a {\bf representation} of $((\mathfrak{g}, [~, ~ ]), n)$ is a triple $(V, \rho, s)$, where $(V, \rho)$ is a representation of the Lie algebra $(\mathfrak{g}, [~, ~])$, and $s : V \rightarrow V$ is a linear map satisfying
    \begin{align*}
        \rho ({n(x)} ) s(v) = s \big(  \rho ({n(x)}) v + \rho (x) s(v) - s (\rho (x) v)  \big), \text{ for } x \in \mathfrak{g}, v \in V.
    \end{align*}
For a representation as above, the pair $((\mathfrak{g} \oplus V, [~, ~ ]_\ltimes), n \oplus s)$ is a new Nijenhuis Lie algebra, where
\begin{align*}
    [ (x, u), (y, v)]_\ltimes := ([x, y], \rho(x) v - \rho(y) u), \text{ for } (x, u), (y, v) \in \mathfrak{g} \oplus V,
\end{align*}
is the semidirect product Lie bracket on $\mathfrak{g} \oplus V$. Thus, the Nijenhuis Lie algebra constructed in this way is called the semidirect product of the given Nijenhuis Lie algebra $( (\mathfrak{g}, [~, ~ ]), n)$ with the representation $(V, \rho, s)$.

\medskip

We shall now consider $2$-representations of a Nijenhuis Lie algebra. Let $C$ be a $2$-vector space. Since $C$ is a linear category, we may consider the vector space $\mathrm{End}(C)$ of all linear endofunctors of $C$. With this notation, we have the following.

\begin{defn} Let $(\mathfrak{g}, [~, ~ ])$ be a Lie algebra.

\medskip

  (i)   Then a {\bf 2-representation} of $\mathfrak{g}$ is a triple $(C, \rho, \mathcal{R})$ consisting of a $2$-vector space $C$ with a linear assignment $\rho: \mathfrak{g} \rightarrow \mathrm{End}(C)$, and an antisymmetric (with respect to the elements of $\mathfrak{g}$) trilinear natural isomorphism
  \begin{align*}
      \mathcal{R}_{x, y} (v) : \rho ( [x, y] ) (v) \rightarrow ( \rho (x) \circ \rho(y) - \rho (y) \circ \rho(x) ) (v) = [\rho(x), \rho(y)] (v), \text{ for } x, y \in \mathfrak{g}, v \in C_0
  \end{align*}
  that makes the following diagram commutative:
  \begin{align}\label{2rep-diagram}
  \xymatrix{
   & \rho ( [[x, y], z])(v)  \ar[ld]_{= } \ar[rd]^{\mathcal{R}_{[x, y], z} (v)} &  \\
  \substack{ \rho ( [[x, z], y])(v) + \rho ([x, [y, z]])(v)  \\ } \ar[d]^{\mathcal{R}_{[x,z], y} (v) + \mathcal{R}_{x, [y, z]} (v) }  & & \substack{ [ \rho ([x, y]), \rho (z) ](v) \\ } \ar[d]^{ [ \mathcal{R}_{x, y} , 1_{\rho (z)}] (v) } \\
  \substack{ [ \rho ([x, z]), \rho (y)](v) + [ \rho (x), \rho ([y, z])](v) \\ } \ar[rd]_{ [\mathcal{R}_{x, z}, 1_{\rho (y)}] (v) + [ 1_{\rho (x)}, \mathcal{R}_{y, z}] (v) \qquad \qquad } & &  \substack{ [ [ \rho(x), \rho (y)], \rho (z)](v) \\ } \ar[ld]^{=} \\
   & \substack{[[ \rho (x), \rho (z)], \rho (y)](v) + [ \rho (x), [ \rho (y), \rho (z) ]] (v).\\  } & 
  }
  \end{align}

  \medskip

  (ii)  A {\bf homomorphism} of $2$-representations from $(C, \rho, \mathcal{R})$ to $(C', \rho', \mathcal{R}')$ is a linear functor $F = (F_0, F_1) : C \rightarrow C'$ among the underlying $2$-vector spaces with a bilinear natural transformation 
  \begin{align*}
      F_2 (x) (v) : \rho'(x) ( F_0 (v)) \rightarrow F_0 ( \rho (x) (v)) 
  \end{align*}
  making the below diagram commutative
  \begin{align}\label{2rep-mor-diag}
      \xymatrix{
      \rho' ([x, y]) (F_0 (v)) \ar[d]_{F_2 ([x, y]) (v)} \ar[rr]^{ \mathcal{R}'_{x, y} (F_0 (v)) } &  &  \substack{ (\rho' (x) \circ \rho' (y)) ( F_0 (v)) \\ - ( \rho' (y) \circ \rho' (x)) ( F_0 (v)) } \ar[r]^{ \substack{ {\rho ' (x)} ( F_2 (y) (v) ) \\ - {\rho ' (y)} ( F_2 (x) (v) ) } } &  \substack{ \rho' (x) (  F_0 (\rho (y)(v)) )  \\ -  \rho' (y) (  F_0 (\rho (x)(v)) )  }\ar[d]^{ F_2 (x) { (\rho (y) (v) )}  -  F_2 (y) {(\rho (x) (v))}   } \\
     F_0 ( \rho ([x, y]) (v)) \ar[rrr]_{F_1 ( \mathcal{R}_{x, y} (v) ) } & & & F_0 ( \rho (x) \circ \rho (y) - \rho (y) \circ \rho (x) ) (v).
      }
  \end{align}
  A homomorphism as above may be denoted by the triple $(F_0, F_1, F_2)$.
\end{defn}

\medskip

\begin{prop}\label{prop-compo}
    Let $(\mathfrak{g}, [~, ~ ])$ be a Lie algebra. Then the collection of all $2$-representations of $\mathfrak{g}$, together with homomorphisms between them, forms a category. (We denote this category by {\sf 2Rep}$(\mathfrak{g})$).
\end{prop}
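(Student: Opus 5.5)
We follow the pattern of Propositions \ref{prop-categ} and \ref{prop-categ2}: define composition of homomorphisms and identity homomorphisms, then check that composites are again homomorphisms and that the category axioms hold. Let $F=(F_0,F_1,F_2):(C,\rho,\mathcal{R})\to(C',\rho',\mathcal{R}')$ and $G=(G_0,G_1,G_2):(C',\rho',\mathcal{R}')\to(C'',\rho'',\mathcal{R}'')$ be homomorphisms of $2$-representations. The underlying linear functor of $G\circ F$ is $(G_0\circ F_0, G_1\circ F_1)$, and its natural transformation is the composite
\begin{align*}
(G\circ F)_2(x)(v): \rho''(x)(G_0F_0(v)) \xrightarrow{G_2(x)(F_0(v))} G_0(\rho'(x)(F_0(v))) \xrightarrow{G_1(F_2(x)(v))} G_0F_0(\rho(x)(v)).
\end{align*}
The identity homomorphism on $(C,\rho,\mathcal{R})$ is the identity functor with $F_2(x)(v)=1_{\rho(x)(v)}$.

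First, we check that $(G\circ F)_2$ is a bilinear natural transformation. Bilinearity follows from linearity of $G_1$ and bilinearity of $F_2,G_2$. Naturality in $v$ follows by pasting two naturality squares. One is for $G_2(x)$, applied to the morphism $F_1(f)$. The other is the image under the functor $G_1$ of the naturality square of $F_2(x)$. Here we use that $G$ is a functor, so that $G_1$ preserves composition.

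Next, we verify diagram (\ref{2rep-mor-diag}) for $G\circ F$; this is the main step. We paste three pieces. The first is diagram (\ref{2rep-mor-diag}) for $G$, evaluated at the object $F_0(v)$. The second is the image under $G_1$ of diagram (\ref{2rep-mor-diag}) for $F$, which still commutes because $G_1$ is a linear functor. These two share the edge $G_1(\mathcal{R}'_{x,y}(F_0(v)))$.

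The third piece is a middle square expressing the interchange between $G_2$ and the morphisms $\rho'(x)(F_2(y)(v))-\rho'(y)(F_2(x)(v))$. It is the naturality of $G_2(x)$ and $G_2(y)$ with respect to the morphisms $F_2(y)(v)$ and $F_2(x)(v)$ in $C'$. The subtlety is that the composites involve sums of morphisms. These sums make sense because composition in a $2$-vector space is linear on composable pairs, so a sum of composites equals the composite of the sums. We will use this interchange law freely, as the paper does in Proposition \ref{prop-categ}.

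Once the pasting is done, the outer boundary is exactly diagram (\ref{2rep-mor-diag}) for $(G\circ F)_2$. Bookkeeping of these linear combinations of composites, rather than any conceptual difficulty, is the expected obstacle.

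Finally, we check the category axioms. Associativity holds on the functor level trivially. On the $2$-level, both ways of bracketing give $H_2(x)(G_0F_0 v)$, followed by $H_1(G_2(x)(F_0 v))$, followed by $H_1G_1(F_2(x)(v))$, using functoriality of $H_1$. Unit laws hold because $G_1$ and $F_1$ send identity morphisms to identity morphisms, so composing with $1_{\rho(x)(v)}$ changes nothing. The identity homomorphism satisfies (\ref{2rep-mor-diag}) trivially, since both paths reduce to $\mathcal{R}_{x,y}(v)$. This shows {\sf 2Rep}$(\mathfrak{g})$ is a category.
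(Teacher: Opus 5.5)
Your proposal is correct and takes the same approach as the paper. You define composition and identities exactly as the paper does, with $(G\circ F)_2(x)(v)$ given by $G_2(x)(F_0(v))$ followed by $G_1(F_2(x)(v))$, and with identity natural transformations for the identity homomorphism. The paper only asserts that the remaining checks are easy, whereas you carry them out correctly: you paste diagram (\ref{2rep-mor-diag}) for $G$ at $F_0(v)$, the $G_1$-image of that diagram for $F$, and the naturality squares of $G_2(x)$ and $G_2(y)$ along $F_2(y)(v)$ and $F_2(x)(v)$ (using linearity of composition), and you verify associativity and the unit laws.
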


\begin{proof}
    As before, we define the composition of homomorphisms and describe identity homomorphisms. Let
    \begin{align*}
        F = (F_0, F_1, F_2) : (C, \rho, \mathcal{R}) \rightarrow (C', \rho', \mathcal{R}') ~~~ \text{ and } ~~~ G = (G_0, G_1, G_2) : (C', \rho', \mathcal{R}') \rightarrow (C'', \rho'', \mathcal{R}'')
    \end{align*}
    be homomorphisms between $2$-representations. Then their composition $G \circ F : (C, \rho, \mathcal{R}) \rightarrow (C'', \rho'', \mathcal{R}'')$ is defined by taking $(G \circ F)_0 = G_0 \circ F_0$, $(G \circ F)_1 = G_1 \circ F_1$ and
    \begin{align*}
     (G \circ F)_2 (x) (v) : \rho'' (x) (( G_0 \circ F_0)(v)) \xrightarrow{ G_2 (x)  (F_0 (v))  } G_0 ( \rho' (x) (F_0 (v)) ) \xrightarrow{ G_1  (  F_2 (x)(v)) } (G_0 \circ F_0) ( \rho (x)(v)).
    \end{align*}
    On the other hand, given any $2$-representation $(C, \rho, \mathcal{R})$, the identity homomorphism $1_{ (C, \rho, \mathcal{R})  }$ is defined by taking $ ( 1_{ (C, \rho, \mathcal{R})  } )_0 = \mathrm{Id}_{C_0}$, $(1_{ (C, \rho, \mathcal{R})  })_1 = \mathrm{Id}_{C_1}$, together with the identity natural transformation $(1_{ (C, \rho, \mathcal{R})  })_2$. Then it is easy to see that the above structures make {\sf 2Rep}$(\mathfrak{g})$ into a category.
\end{proof}

Note that $2$-representations of a Lie algebra, and homomorphisms between them, have the following nice consequence. The verification is straightforward.

\begin{prop}\label{prop-lie2-semi}
  Let $(\mathfrak{g}, [~, ~])$ be a Lie algebra and $(C, \rho, \mathcal{R})$ be a $2$-representation of it. Then the triple
    \begin{align*}
       ( \mathfrak{g} \oplus C = (\mathfrak{g} \oplus C_1 \rightrightarrows \mathfrak{g} \oplus C_0), [~, ~]_\ltimes, \mathcal{J})
    \end{align*} 
    is a Lie $2$-algebra, where the antisymmetric bilinear functor $[~, ~ ]_\ltimes : (\mathfrak{g} \oplus C) \times (\mathfrak{g} \oplus C) \rightarrow \mathfrak{g} \oplus C$ and the Jacobiator $\mathcal{J}$ are respectively given by
    \begin{align*}
        &[ (x, u), (y, v) ]_\ltimes := ([x, y], \rho (x) v - \rho (y) u), \quad [ (x, h), (y, k)]_\ltimes := ([x, y], \rho(x)k - \rho (y) h), \\
        & \qquad \qquad \mathcal{J}_{(x, u), (y, v), (z, w)}:= \big( [[x,y],z] ~ \! \! , ~ \! \! \mathcal{R}_{x, y} (w)  + \widetilde{\mathcal{R}_{y, z}} (u) - \widetilde{\mathcal{R}_{x, z}} (v)  \big),
    \end{align*}
    for $(x, u), (y, v), (z, w) \in \mathfrak{g} \oplus C_0$ and $(x, h), (y, k) \in \mathfrak{g} \oplus C_1$. Here $\widetilde{\mathcal{R}_{y, z}} (u)$ is the composition
    \begin{align*}
        \rho (z) \rho (y) u \rightarrow \rho (z) \rho (y) u  - \rho ([y, z]) u + \rho ([y, z]) u \xrightarrow{1 + \mathcal{R}_{y, z} (u) } \rho (y) \rho(z) u - \rho ([y, z])u .
    \end{align*}
    (This Lie $2$-algebra is called the {\bf semidirect product} of the Lie algebra $\mathfrak{g}$ with the given $2$-representation $(C, \rho, \mathcal{R})$). Moreover, if $(C, \rho, \mathcal{R})$ and $(C', \rho', \mathcal{R}')$ are $2$-representations, and $(F_0, F_1, {F_2})$ is a homomorphism between them, then the triple
\begin{align*}
   ( \mathrm{Id} \oplus F_0, \mathrm{Id} \oplus F_1, \widetilde{ F_2}) :  ( \mathfrak{g} \oplus C, [~, ~]_\ltimes, \mathcal{J}) \rightarrow  ( \mathfrak{g} \oplus C', [~, ~]_\ltimes', \mathcal{J}') 
\end{align*}
is a homomorphism between the corresponding semidirect product Lie $2$-algebras, where 
\begin{align*}
    \widetilde{F_2} ((x, u), (y, v) ) := ( [x, y]~ \! \!  , ~\! \! F_2 (x) v - F_2 (y)u ), \text{ for } (x, u), (y, v) \in \mathfrak{g} \oplus C_0.
\end{align*}
\end{prop}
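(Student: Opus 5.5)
The cleanest route is to avoid the pentagon-type diagram altogether and pass through the Baez--Crans equivalence. I would translate the data into a $2$-term $L_\infty$-algebra and check the identities (a)--(e) of Definition \ref{defi-2term}.

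First, write $C$ in the normal form $C_1 = C_0 \oplus V_1$, with $V_1=\ker(s)$ and $V_0=C_0$. Since $\rho(x)$ is a linear functor, it preserves $\ker(s)$ and commutes with $t$. This gives a chain map $\rho_0(x): V_0\to V_0$, $\rho_1(x): V_1\to V_1$ with $d\rho_1(x)=\rho_0(x)d$, where $d=t|_{\ker s}$. Set $\omega(x,y)(v):=\overrightarrow{\mathcal{R}_{x,y}(v)}\in V_1$. The source/target condition on $\mathcal{R}_{x,y}(v)$ gives
\[
d\,\omega(x,y)v=[\rho_0(x),\rho_0(y)]v-\rho_0([x,y])v .
\]
Taking arrow parts in the naturality square of $\mathcal{R}$ (as in the proof of Theorem \ref{thmm-finall}) gives the analogous identity on $V_1$ with $\omega(x,y)d$. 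Finally, commutativity of diagram (\ref{2rep-diagram}) becomes the cocycle identity
\[
\omega([x,y],z)+\rho(z)\omega(x,y)+\mathrm{c.p.}=0
\]
after passing to arrow parts. In other words, $(\rho_0,\rho_1,\omega)$ is a $2$-term representation up to homotopy.

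Next, I would form the semidirect product. Its complex is $V_1\to \mathfrak{g}\oplus V_0$ with bracket $[(x,u),(y,v)]=([x,y],\rho_0(x)v-\rho_0(y)u)$ and $\llbracket (x,u),h\rrbracket=\rho_1(x)h$. The bracket on $V_1\times V_1$ is zero. The ternary map is
\[
l_3((x,u),(y,v),(z,w))=\omega(x,y)w+\omega(y,z)u+\omega(z,x)v .
\]
This matches the arrow part of the proposed $\mathcal{J}$, since the arrow part of $\widetilde{\mathcal{R}_{y,z}}(u)$ equals $\omega(y,z)u$. Identities (a) and (b) are immediate from the chain-map property and the vanishing $V_1$-bracket. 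For (c), expand the Jacobi sum: the $\mathfrak{g}$-part vanishes, and the $V_0$-part equals $d(l_3)$ by the displayed formula for $d\omega$. Identity (d) follows in the same way from the $V_1$ version. Identity (e) is the cocycle identity, extended to the terms where one entry lies in $V_0$.

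Then I would apply the functor ${\sf T}$ of Baez--Crans (the one used in Theorem \ref{thmm-finall}). It produces a Lie $2$-algebra whose objects are $\mathfrak{g}\oplus C_0$ and whose morphisms are $\mathfrak{g}\oplus C_0\oplus V_1\cong \mathfrak{g}\oplus C_1$. I would check that, under the isomorphism $(x,u,h)\mapsto (x,i_u+h)$, the bracket formula (\ref{stru1}) becomes $[(x,h),(y,k)]_\ltimes$. The key point is that $\rho(x)$ applied to $i_u+h$ equals $i_{\rho(x)u}+\rho_1(x)h$. The Jacobiator (\ref{stru2}) then corresponds to the stated $\mathcal{J}$.

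For homomorphisms, set $\varphi_2((x,u),(y,v)):=\overrightarrow{F_2(x)v}-\overrightarrow{F_2(y)u}$. The three $L_\infty$-homomorphism identities follow from three facts:
\begin{itemize}
\item the source/target condition of $F_2$ gives the first identity;
\item the naturality of $F_2$ gives the second;
\item the arrow part of diagram (\ref{2rep-mor-diag}) gives the third.
\end{itemize}
Transporting back through ${\sf T}$ gives $(\mathrm{Id}\oplus F_0,\mathrm{Id}\oplus F_1,\widetilde{F_2})$.

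I expect the main obstacle to be the sign and ordering bookkeeping in (e) and in the third homomorphism identity. Matching $\widetilde{\mathcal{R}}$ and the ordering conventions of $\mathcal{J}_{x,y,z}$ (which go $[[x,y],z]\to[[x,z],y]+[x,[y,z]]$) against $l_3$ requires care. I would fix the convention by checking the single case where only $w\neq 0$ first, then use complete antisymmetry to get the rest.
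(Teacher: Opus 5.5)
Your route is genuinely different from the paper's, and it works once one displayed identity is corrected. The paper offers only ``the verification is straightforward'', which means a direct check of three things on $\mathfrak{g}\oplus C$: that $[~,~]_\ltimes$ is a bilinear functor, that $\mathcal{J}$ is natural, and that the four-variable Jacobiator diagram commutes (likewise the homomorphism diagram for $\widetilde{F_2}$).

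You instead do the following:
\begin{itemize}
\item re-derive the object and morphism part of the functor $\mathsf{S}$ of Theorem \ref{theorem-mainth}, with $\omega=-\nu$;
\item verify the semidirect product of Proposition \ref{prop-semid} directly;
\item apply the Baez--Crans functor $\mathsf{T}$ and transport structure along $(x,u,h)\mapsto(x,i_u+h)$.
\end{itemize}
In effect you show that $\Phi$ agrees with $\mathsf{T}\circ\Psi\circ\mathsf{S}$ up to this canonical isomorphism. That is exactly the commuting square the paper draws after Theorem \ref{theorem-mainth}.

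What this buys: functoriality and naturality come for free, and the coherence diagram collapses to the single linear identity (e). The price is reliance on the explicit form of $\mathsf{T}$ and on the transport step. Your matching checks are right:
\begin{itemize}
\item $\rho(x)(i_u+h)=i_{\rho(x)u}+\rho(x)h$;
\item the term $\llbracket dH,K\rrbracket$ in (\ref{stru1}) vanishes because $dH$ has zero $\mathfrak{g}$-component;
\item $\overrightarrow{\widetilde{\mathcal{R}_{y,z}}(u)}=\overrightarrow{\mathcal{R}_{y,z}(u)}$;
\item your $\varphi_2$ satisfies the three $L_\infty$-homomorphism identities and transports back to $\widetilde{F_2}$.
\end{itemize}

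The error is your cocycle identity $\omega([x,y],z)+\rho(z)\omega(x,y)+\mathrm{c.p.}=0$. It is neither what the arrow parts of (\ref{2rep-diagram}) give nor what (e) requires. Use
\[
[\mathcal{R}_{x,y},1_{\rho(z)}](v)=\mathcal{R}_{x,y}(\rho(z)v)-\rho(z)\mathcal{R}_{x,y}(v),\qquad [1_{\rho(x)},\mathcal{R}_{y,z}](v)=\rho(x)\mathcal{R}_{y,z}(v)-\mathcal{R}_{y,z}(\rho(x)v).
\]
Equating arrow parts of the two paths then yields
\begin{align*}
\sum_{\mathrm{cyc}}\Big(\omega([x,y],z)+\omega(x,y)\circ\rho_0(z)-\rho_1(z)\circ\omega(x,y)\Big)=0 .
\end{align*}
This is (\ref{2term-g3}) with $\nu=-\omega$, i.e.\ $d_\nabla\omega=0$ for the induced action on $\mathrm{Hom}(V_0,V_1)$. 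It is linear in $\omega$, so the discrepancy is not a sign-convention issue.

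Your version drops the $\omega(x,y)\circ\rho_0(z)$ terms and has the wrong sign on $\rho_1(z)\circ\omega(x,y)$. Even reading $\rho(z)\omega(x,y)$ as the induced action on $\mathrm{Hom}(V_0,V_1)$, the sign is still wrong. This matters for (e). Evaluate (e) at $(x,0),(y,0),(z,0),(0,v)$:
\begin{itemize}
\item the terms $\llbracket X,l_3(Y,Z,T)\rrbracket$ contribute $\rho_1(x)\omega(y,z)v$;
\item the terms $-l_3(\llbracket X,T\rrbracket,Y,Z)$ contribute $-\omega(y,z)\rho_0(x)v$;
\item the remaining terms give $-\sum_{\mathrm{cyc}}\omega([x,y],z)v$.
\end{itemize}
So (e) is exactly (minus) the corrected identity, and it would not follow from the one you wrote. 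With this correction the rest of your argument goes through.
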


\begin{remark}\label{remark-l1}
    It follows from the above proposition that a $2$-representation of a Lie algebra $\mathfrak{g}$ gives rise to a Lie $2$-algebra structure, and a homomorphism between $2$-representations induces a homomorphism between the corresponding Lie $2$-algebras. Further, it is easy to verify that this construction preserves the identity homomorphisms and composition of homomorphisms, yielding a functor $\Phi: {\sf 2Rep} (\mathfrak{g}) \rightarrow {\sf Lie2}$.
\end{remark}

\medskip

\medskip

We shall now extend $2$-representations to the context of Nijenhuis Lie algebras. More precisely, we have the following.

\begin{defn}
    Let $( (\mathfrak{g}, [~, ~]), n)$ be a Nijenhuis Lie algebra.

    \medskip

 (i) Then a {\bf $2$-representation} of $( (\mathfrak{g}, [~, ~]), n)$ is a tuple $((C, \rho, \mathcal{R}), S, \Theta)$, where

    \vspace{0.1cm}

    - $(C, \rho, \mathcal{R})$ is a $2$-representation of the underlying Lie algebra $(\mathfrak{g}, [~, ~])$,

    \vspace{0.1cm}

    - $S= (S_0, S_1) : C \rightarrow C$ is a linear functor, with a bilinear natural isomorphism
    \begin{align*}
       \Theta_{x, v} : \rho (n(x) ) ~\! S_0 (v) \rightarrow S_0 \big( \rho (n (x)) v \big) + S_0 \big( \rho(x) S_0 (v) \big) - S_0^2 ( \rho (x) v)
    \end{align*}
    that makes a commutative diagram which can be obtained by expanding $\rho ( [n(x), n(y)]) S_0 (v)$ in two different ways similar to (\ref{big-diag}).

\medskip

  (ii) Let $((C, \rho, \mathcal{R}), S, \Theta)$ and $((C', \rho', \mathcal{R}'), S', \Theta')$ be $2$-representations. Then a {\bf homomorphism} between them is a quadruple $(F_0, F_1, F_2, F_3)$, where 

  \vspace{0.1cm}
  
  - the triple $(F_0, F_1, F_2) : (C, \rho, \mathcal{R}) \rightarrow (C', \rho', \mathcal{R}') $ is a homomorphism between $2$-representations of the Lie algebra $(\mathfrak{g}, [~, ~])$,

  \vspace{0.1cm}

  - $F_3 (v) : S_0' ( F_0 (v)) \rightarrow F_0 ( S_0 (v))$ is a linear natural transformation such that the diagram commutes

  \[
  \xymatrix{
 \rho'(n (x)) S_0' (F_0 (v)) \ar[d]_{\rho' (n (x)) F_3 (v) } \ar[r]^{\Theta'_{ x, F_0 (v)}} & \substack{ S'_0 ( \rho' (n (x)) F_0 (v)   )  \\ +  S'_0 (  \rho' (x) S_0' (F_0 (v)) ) \\ - {S'_0}^2 ( \rho' (x) F_0 (v) )} \ar[r]^{ \substack{ S_1' (F_2 (n(x)) v ) \\ + S_1' ( \rho' (x) F_3 (v) ) \\ - {S_1'}^2 (F_2 (x) v) }} & \substack{  S'_0 (  F_0 ( \rho (n(x)) v ) ) \\ + S_0' ( \rho' (x) F_0 (S_0 (v))) \\ - {S'_0}^2 F_0 (\rho (x) v) }\ar[d]^{\substack{ 1 + S_1'(F_2 (x) S_0 (v)) \\ - S_1'F_3 ( \rho (x) v) } } \\
\substack{ \rho' (n (x)) F_0 (S_0 (v)) \\ } \ar[d]_{ F_2 (n (x)) S_0 (v) } & & \substack{ S_0' ( F_0 ( \rho (n(x)) v ) ) \\   + S_0' (F_0 ( \rho (x) S_0 (v)))   \\  -S_0' F_0 S_0 (\rho (x) v)} \ar[d]^{ \substack{  F_3 ( \rho (n (x)) v ) + F_3 ( \rho (x) S_0 (v) ) \\  - F_3 (S_0 ( \rho (x) v ))}} \\
\substack{ F_0 (  \rho (n(x)) S_0 (v) ) \\ } \ar[rr]_{ F_1 (\Theta_{x, v})} & & \substack{ F_0 S_0 \big(   \rho (n(x)) v + \rho (x) S_0 (v) - S_0 ( \rho (x) v) \big) \\ }.
  }
  \]
\end{defn}

\begin{prop}
    Let $((\mathfrak{g}, [~, ~]), n)$ be a Nijenhuis Lie algebra. Then the collection of all $2$-representations of it, together with homomorphisms between them, forms a category. (We denote this category by {\sf 2Rep}$(\mathfrak{g}, n)$).
\end{prop}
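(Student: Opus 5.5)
We follow the pattern of Propositions \ref{prop-categ} and \ref{prop-compo}: define composition and identities, check that the composite is again a homomorphism of $2$-representations of $((\mathfrak{g}, [~,~]), n)$, and then verify associativity and unitality. For homomorphisms
\begin{align*}
F=(F_0,F_1,F_2,F_3) : ((C,\rho,\mathcal{R}),S,\Theta) \rightarrow ((C',\rho',\mathcal{R}'),S',\Theta')
\end{align*}
and $G=(G_0,G_1,G_2,G_3)$ from $((C',\rho',\mathcal{R}'),S',\Theta')$ to $((C'',\rho'',\mathcal{R}''),S'',\Theta'')$, we set $(G\circ F)_0=G_0\circ F_0$ and $(G\circ F)_1=G_1\circ F_1$. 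The component $(G\circ F)_2$ is the one from Proposition \ref{prop-compo}. Mirroring Proposition \ref{prop-categ}, $(G\circ F)_3(v)$ is the composite
\begin{align*}
S_0''(G_0F_0(v)) \xrightarrow{G_3(F_0(v))} G_0(S_0'(F_0(v))) \xrightarrow{G_1(F_3(v))} G_0F_0(S_0(v)).
\end{align*}
The identity homomorphism is the identity functor, with $(1)_2$ and $(1)_3$ the identity natural transformations.

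\emph{Step 1.} By Proposition \ref{prop-compo}, the triple $((G\circ F)_0,(G\circ F)_1,(G\circ F)_2)$ is already a homomorphism of $2$-representations of $(\mathfrak{g},[~,~])$. So only the conditions involving $F_3$ need checking.

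\emph{Step 2.} $(G\circ F)_3$ is linear in $v$, since $G_3$, $F_3$, $G_1$ and $F_0$ are linear. It is natural because it is a whiskered vertical composite of natural transformations, $G_3F_0$ followed by $G_1F_3$.

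\emph{Step 3.} This is the main step: the hexagon-type diagram in part (ii) of the definition must commute for $G\circ F$. The plan is to paste two diagrams. First, apply the linear functor $G$, via $G_1$, to the commuting diagram for $F$; this gives a commuting diagram in $C''$ whose vertices are $G_0$ applied to the vertices for $F$. Second, take the diagram for $G$ evaluated at $F_0(v)$. Joining these needs only the following routine facts:
\begin{itemize}
\item naturality of $G_2(x)$ and $G_3$ with respect to the morphisms $F_2(\cdot)(\cdot)$ and $F_3(\cdot)$;
\item functoriality of $\rho''(x)$ and $S''$, e.g. $S_1''(G_1(f)\circ G_3(w))$ splits as a composite;
\item the interchange law in the linear category $C''$.
\end{itemize}
Together these show that the outer boundary equals the diagram for $G\circ F$.

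The main obstacle is bookkeeping the extra terms such as ${S_1''}^2((G\circ F)_2(x)v)$ and $S_1''((G\circ F)_3(\rho(x)v))$. Each expands into two pieces, and these must be matched with the corresponding pieces in the two pasted diagrams. Rather than tracking arrows, I would pass to arrow parts, as in the proof of Theorem \ref{thmm-finall}. Linearity of composition in a $2$-vector space means commutativity of such diagrams reduces to a linear identity among arrow parts. The identity for $G\circ F$ is then the sum of $G_1$ applied to the identity for $F$ and the identity for $G$ at $F_0(v)$, plus naturality cross-terms that cancel pairwise.

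\emph{Step 4.} Associativity: the $0$ and $1$ components are associative because functor composition is. The $2$ and $3$ components agree because both bracketings give the same composite of arrows, using functoriality of $H_1$ on composites. Unit laws are immediate from the identity transformations. Hence ${\sf 2Rep}(\mathfrak{g},n)$ is a category.
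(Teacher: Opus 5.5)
Your proposal is correct and follows the paper's proof. The paper likewise takes the composite of the underlying homomorphisms of $2$-representations of $\mathfrak{g}$ from Proposition \ref{prop-compo}, sets $(G\circ F)_3(v)=G_1(F_3(v))\circ G_3(F_0(v))$, and uses the identity functor with identity natural transformations as the unit. The paper stops at these definitions. Your Steps 3 and 4 fill in what it leaves implicit: closure under composition via arrow parts, which add under composition, together with naturality of $G_2$ and $G_3$, and then the associativity and unit checks.
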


\begin{proof}
    The composition of two homomorphisms $F = (F_0, F_1, F_2, F_3)$ and $G = (G_0, G_1, G_2, G_3)$ is defined by taking the composition of homomorphisms between the underlying $2$-representations of the Lie algebra $\mathfrak{g}$ (cf. Proposition \ref{prop-compo}), together with 
    \begin{align*}
        (G \circ F)_3 (v) : S_0'' ( (G_0 \circ F_0) (v)) \xrightarrow{G_3 (F_0 (v))} G_0 (S_0' (F_0 (v))) \xrightarrow{G_1 (F_3 (v))} (G_0 \circ F_0) S_0 (v).
    \end{align*}
    Moreover, the identity homomorphism $1_{(C, S, \Theta)} : ( (C, \rho, \mathcal{R}), S, \Theta) \rightarrow ( (C, \rho, \mathcal{R}), S, \Theta)$ is defined by taking the components as $(1_{(C, S, \Theta)})_0 = \mathrm{Id}_{C_0}$, $(1_{(C, S, \Theta)})_1 = \mathrm{Id}_{C_1}$, with the identity natural transformations $(1_{(C, S, \Theta)})_2$ and $(1_{(C, S, \Theta)})_3$.
\end{proof}

\begin{prop}\label{prop-im1}
    Let $((\mathfrak{g}, [~, ~]), n)$ be a Nijenhuis Lie algebra and $((C, \rho, \mathcal{R}), S, \Theta)$ be a $2$-representation of it. Then
    \begin{align*}
       ( (\mathfrak{g} \oplus C, [~, ~]_\ltimes, \mathcal{J}), N, \mathcal{N} )
    \end{align*}
    is a Nijenhuis Lie $2$-algebra, where the Lie $2$-algebra structure $(\mathfrak{g} \oplus C, [~, ~]_\ltimes, \mathcal{J})$ is described in Proposition \ref{prop-lie2-semi}, and 
    \begin{align*}
        N_0 (x, u) := (n(x), S_0 (u)), \quad N_1 (x, h) := (n (x), S_1 (h)) ~~~ \text{ and } ~~~ \mathcal{N}_{(x, u), (y, v)} := ( [n(x), n(y)] ~ \! \! , ~ \! \! \Theta_{x, v} - \Theta_{y, u} ),
    \end{align*}
    for $(x, u), (y, v) \in \mathfrak{g} \oplus C_0$ and $(x, h) \in \mathfrak{g} \oplus C_1$. Moreover, if $((C, \rho, \mathcal{R}), S, \Theta)$ and $((C', \rho', \mathcal{R}'), S', \Theta')$ are $2$-representations, and $(F_0, F_1, F_2, F_3)$ is a homomorphism between them, then the quadruple
    \begin{align*}
         ( \mathrm{Id} \oplus F_0, \mathrm{Id} \oplus F_1, \widetilde{ F_2}, \widetilde{ F_3} ) :  (( \mathfrak{g} \oplus C, [~, ~]_\ltimes, \mathcal{J}), N , \mathcal{N} ) \rightarrow ( ( \mathfrak{g} \oplus C', [~, ~]_\ltimes', \mathcal{J}'), N', \mathcal{N}' )
    \end{align*}
    is a homomorphism between the corresponding Nijenhuis Lie $2$-algebras, where
    \begin{align*}
        \widetilde{F_3} (x, u) : N_0' ( (\mathrm{Id} \oplus F_0) (x, u) ) \rightarrow (\mathrm{Id} \oplus F_0) N_0 (x, u) ~~ \text{ is given by } ~~  \widetilde{F_3} (x, u) = (n (x), F_3 (u)).
    \end{align*}
\end{prop}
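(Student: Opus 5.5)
The plan is to separate what is already known from what is new. Proposition \ref{prop-lie2-semi} gives that $(\mathfrak{g} \oplus C, [~,~]_\ltimes, \mathcal{J})$ is a Lie $2$-algebra, and that $(\mathrm{Id}\oplus F_0, \mathrm{Id}\oplus F_1, \widetilde{F_2})$ is a Lie $2$-algebra homomorphism. It therefore remains to check four things: (a) $N=(N_0,N_1)$ is a linear functor; (b) $\mathcal{N}$ is a well-typed, antisymmetric, bilinear natural isomorphism; (c) the coherence diagram (\ref{big-diag}) commutes; (d) $\widetilde{F_3}$ is natural and makes (\ref{homo-nij}) commute.

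For (a), $N$ is the direct sum of the linear map $n$, viewed as an endofunctor of the discrete $2$-vector space $\mathfrak{g}$, with the functor $S$. By Remark \ref{rem-g} it is compatible with $s, t, i$ and composition. For (b), first compute the target of $\mathcal{N}_{(x,u),(y,v)}$.
\begin{align*}
N_0[N_0(x,u),(y,v)]_\ltimes + N_0[(x,u),N_0(y,v)]_\ltimes - N_0^2[(x,u),(y,v)]_\ltimes .
\end{align*}
Its $\mathfrak{g}$-component is $n[n(x),y]+n[x,n(y)]-n^2[x,y] = [n(x),n(y)]$, using that $n$ is Nijenhuis; this justifies the identity first component. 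In the $C$-component, the terms depending on $v$ are $S_0(\rho(n(x))v)+S_0(\rho(x)S_0(v))-S_0^2(\rho(x)v)$, which is exactly the target of $\Theta_{x,v}$. The terms depending on $u$ arise antisymmetrically and match $-\Theta_{y,u}$. The source $[N_0(x,u),N_0(y,v)]_\ltimes$ has $C$-component $\rho(n(x))S_0(v)-\rho(n(y))S_0(u)$, which matches the sources. Antisymmetry is built in, and naturality and invertibility are inherited componentwise from $\Theta$.

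The main obstacle is (c), the large diagram (\ref{big-diag}) for $\mathcal{N}$. I would not chase the diagram directly. Instead, translate both sides into arrow parts, as in the proof of Theorem \ref{thmm-finall}: commutativity of (\ref{big-diag}) is equivalent to the identity (\ref{homo-r3}) for $n_2 = \overrightarrow{\mathcal{N}}$ and $l_3 = \overrightarrow{\mathcal{J}}$ in the associated $2$-term $L_\infty$-algebra. The $\mathfrak{g}$-component of every arrow is an identity, so only $C$-components matter. Since $n_2$ and $l_3$ are sums of terms each linear in exactly one of $u, v, w$, I evaluate (\ref{homo-r3}) on $((x,u),(y,v),(z,w))$ and split it by which vector appears. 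By the complete antisymmetry of (\ref{homo-r3}) it suffices to treat the $w$-terms. These reduce to the coherence diagram postulated for $\Theta$, namely the two expansions of $\rho([n(x),n(y)])S_0(w)$. Here $\mathcal{R}_{x,y}$ and $\mathcal{R}_{n(x),n(y)}$ enter through $l_3$, and the Nijenhuis identity for $n$ rewrites $[n(x),n(y)]$. I expect the bookkeeping of the $\widetilde{\mathcal{R}}$ terms and the signs to be the delicate part.

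For (d), naturality of $\widetilde{F_3}$ in the $C$-slot is naturality of $F_3$, and the $\mathfrak{g}$-slot is trivial. For diagram (\ref{homo-nij}), again pass to arrow parts, reducing it to identity (\ref{nijhomo3}). Split by the $u$- and $v$-contributions; by antisymmetry only the $v$-part needs checking. Bilinearity of $\widetilde{F_2}$ and $\widetilde{F_3}$ then reduces it to the commutative square in the definition of a homomorphism of $2$-representations of $((\mathfrak{g},[~,~]),n)$. The term $[\overrightarrow{F_3(x)},\overrightarrow{F_3(y)}]'$ vanishes, because $\widetilde{F_3}$ has trivial arrow part on $\mathfrak{g}$ and the bracket of two elements of $C_1$ is zero in the semidirect product.
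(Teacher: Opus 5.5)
Steps (a)–(c) are sound. In (c) the slicing is legitimate: the slices with no vector entry or with two vector entries vanish. The $w$-slice of (\ref{homo-r3}) at $((x,0),(y,0),(0,w))$ is exactly (\ref{2term-n3}), with $\theta(x)w=\overrightarrow{\Theta_{x,w}}$ and $\nu(x,y)w=-\overrightarrow{\mathcal{R}_{x,y}(w)}$, and this is the arrow-part form of the coherence imposed on $\Theta$.

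The gap is in (d), at the step ``pass to arrow parts, reducing it to (\ref{nijhomo3})''. The identity (\ref{nijhomo3}), as printed, is not the arrow-part form of (\ref{homo-nij}). In that diagram the arrow $[F_3(x),F_3(y)]'$ is a bracket of two non-identity morphisms. Write $F_3(x)=i_a+\overrightarrow{F_3(x)}$ and $F_3(y)=i_c+\overrightarrow{F_3(y)}$, with $a=N_0'(F_0(x))$ and $c=N_0'(F_0(y))$. Bilinearity and $[i_a,i_c]'=i_{[a,c]'}$ then give
\[
\overrightarrow{[F_3(x),F_3(y)]'}=[i_a,\overrightarrow{F_3(y)}]'+[\overrightarrow{F_3(x)},i_c]'+[\overrightarrow{F_3(x)},\overrightarrow{F_3(y)}]'.
\]
Both (\ref{nijhomo3}) and the displayed identity in the proof of Theorem \ref{thmm-finall} keep only the last summand. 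You are right that this last summand vanishes in the semidirect product. However, at $X=(x,0)$, $Y=(0,v)$ the first summand equals $\rho'(n(x))\overrightarrow{F_3(v)}$. This is exactly the arrow part of the left vertical arrow $\rho'(n(x))F_3(v)$ in the square that defines homomorphisms of $2$-representations of $((\mathfrak{g},[~,~]),n)$. So the $v$-slice of (\ref{nijhomo3}) as printed differs from the arrow-part form of that square by $\rho'(n(x))\overrightarrow{F_3(v)}$. What it actually reproduces is the last identity of Definition \ref{defi-2term-nijlie}(ii), which has the same omission. Your claimed reduction therefore does not go through as stated.

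The repair is to take arrow parts of (\ref{homo-nij}) directly for the semidirect product, instead of citing (\ref{nijhomo3}). On the $v$-slice, the left path contributes $\rho'(n(x))\overrightarrow{F_3(v)}+\overrightarrow{F_2(n(x))(S_0(v))}+F_1(\overrightarrow{\Theta_{x,v}})$. The right path contributes
\begin{align*}
&\overrightarrow{\Theta'_{x,F_0(v)}}+S_1'\big(\overrightarrow{F_2(n(x))(v)}+\rho'(x)\overrightarrow{F_3(v)}-S_1'\overrightarrow{F_2(x)(v)}\big)\\
&\quad+S_1'\big(\overrightarrow{F_2(x)(S_0(v))}-\overrightarrow{F_3(\rho(x)v)}\big)+\overrightarrow{F_3\big(\rho(n(x))v+\rho(x)S_0(v)-S_0(\rho(x)v)\big)}.
\end{align*}
These are exactly the arrow-part sums of the two paths of the $2$-representation square. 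The $u$-slice follows by antisymmetry, and the remaining slices vanish. With this correction your argument proves the statement.
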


\medskip

\medskip

We will now focus on $2$-term representations up to homotopy of a Nijenhuis Lie algebra, and consider the corresponding category. We begin with the following.

\begin{defn} 
    Let $(\mathfrak{g}, [~, ~])$ be a Lie algebra. 
    
    (i) (\cite{abad-crainic,sheng-zhu}) A {\bf 2-term representation up to homotopy} of $\mathfrak{g}$ is a tuple $(V_1 \xrightarrow{d} V_0, \rho_0, \rho_1, \nu)$ consisting of a $2$-term complex $ V_1 \xrightarrow{d} V_0$ equipped with two linear maps $\rho_0 : \mathfrak{g} \rightarrow \mathrm{End} (V_0)$ and $\rho_1 : \mathfrak{g} \rightarrow \mathrm{End}(V_1)$ that are compatible with $d$ in the sense that
    \begin{align*}
         d (\rho_1 (x) \xi ) = \rho_0 (x) d\xi, \text{ for } x \in \mathfrak{g}, ~ \! \xi \in V_1
    \end{align*}
    and an antisymmetric bilinear map $\nu : \mathfrak{g} \times \mathfrak{g} \rightarrow \mathrm{Hom} ( V_0, V_1)$ such that for  all $x, y, z \in \mathfrak{g}$,
    \begin{align}
        \rho_0 [x, y] - [ \rho_0 (x), \rho_0 (y)] =~& d  \circ \nu (x, y), \label{2term-g1}\\
        \rho_1 [x, y] - [\rho_1 (x) , \rho_1 (y)] =~& \nu (x, y) \circ d, \label{2term-g2}\\
        \big\{  \rho_1 (x) \circ \nu (y, z) - \nu (y, z) \circ \rho_0 (x)  \big\} + c.p. =~& \nu ([x, y], z) + c.p.~ \! . \label{2term-g3}
    \end{align}

    \medskip

    (ii) Let $(V_1 \xrightarrow{d} V_0, \rho_0, \rho_1, \nu)$ and $(V'_1 \xrightarrow{d'} V'_0, \rho'_0, \rho'_1, \nu')$ be $2$-term representations up to homotopy of $\mathfrak{g}$. Then a {\bf homomorphism} from the former one to the latter one is a triple $(\varphi_0, \varphi_1, \varphi_2)$, where

    \vspace{0.1cm}

    - $\varphi_0 : V_0 \rightarrow V_0'$ and $\varphi_1 : V_1 \rightarrow V_1'$ are linear maps satisfying $ \varphi_0 \circ d = d' \circ \varphi_1$,

    \vspace{0.1cm}

    - $\varphi_2 : \mathfrak{g} \rightarrow \mathrm{Hom} (V_0, V_1')$ is a linear map 

        \vspace{0.1cm}

\noindent such that for any $x, y \in \mathfrak{g}$, $v \in V_0$ and $\xi \in V_1$, the following list of identities hold:
\begin{align}
    \varphi_0 (\rho_0 (x) v) - \rho_0' (x) \varphi_0 (v) =~& d' (\varphi_2 (x) v), \\
    \varphi_1 (\rho_1 (x) \xi) - \rho_1' (x) \varphi_1 (\xi) =~& \varphi_2 (x) d\xi,\\
    \varphi_1 ( \nu (x, y) v) - \nu' (x, y) \varphi_0 (v) =~& \varphi_2 ([x, y]) v - \big\{  \varphi_2 (x) \rho_0 (y) v - \varphi_2 (y) \rho_0 (x) v  \big\} \nonumber \\&- \big\{ \rho_1' (x) \varphi_2 (y) v - \rho_1' (y) \varphi_2 (x) v \big\}.
\end{align}
\end{defn}

\medskip

Given a Lie algebra, we shall now construct the category of its $2$-term representations up to homotopy.

\begin{prop}\label{2termrepg}
    Let $(\mathfrak{g}, [~, ~ ])$ be a given Lie algebra. Then the collection of all $2$-term representations up to homotopy of $\mathfrak{g}$, together with homomorphisms between them, forms a category. (We denote this category by {\sf 2TermRep}$_\infty (\mathfrak{g})$).
\end{prop}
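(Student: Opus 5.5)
Following the pattern of Propositions \ref{prop-categ}, \ref{prop-categ2} and \ref{prop-compo}, the plan is to write down an explicit composition law and identity morphisms, check that a composite of homomorphisms is again a homomorphism, and then check associativity and the unit laws. Given homomorphisms $(\varphi_0, \varphi_1, \varphi_2)$ from $(V_1 \xrightarrow{d} V_0, \rho_0, \rho_1, \nu)$ to $(V'_1 \xrightarrow{d'} V'_0, \rho'_0, \rho'_1, \nu')$ and $(\psi_0, \psi_1, \psi_2)$ from there to $(V''_1 \xrightarrow{d''} V''_0, \rho''_0, \rho''_1, \nu'')$, I will define
\begin{align*}
(\psi \circ \varphi)_0 := \psi_0 \circ \varphi_0, \quad (\psi \circ \varphi)_1 := \psi_1 \circ \varphi_1, \quad (\psi \circ \varphi)_2 (x) := \psi_2 (x) \circ \varphi_0 + \psi_1 \circ \varphi_2 (x).
\end{align*}
This is the same formula as the composition of $L_\infty$-homomorphisms used in Proposition \ref{prop-categ2}. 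The identity morphism will be $(\mathrm{Id}_{V_0}, \mathrm{Id}_{V_1}, 0)$.

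The first step is to show that $\psi \circ \varphi$ satisfies the chain-map condition and the three defining identities of a homomorphism.

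The chain-map condition is immediate: $\psi_0 \varphi_0 d = \psi_0 d' \varphi_1 = d'' \psi_1 \varphi_1$.

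For the first identity, I will write
\[
\psi_0 \varphi_0 \rho_0(x) - \rho''_0(x) \psi_0 \varphi_0 = \psi_0\big(\varphi_0 \rho_0(x) - \rho'_0(x)\varphi_0\big) + \big(\psi_0 \rho'_0(x) - \rho''_0(x)\psi_0\big)\varphi_0 .
\]
Then I will apply the first identity for $\varphi$ and for $\psi$, and use $\psi_0 d' = d'' \psi_1$. The result is $d''\big(\psi_1 \varphi_2(x) + \psi_2(x)\varphi_0\big)$, as required.

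The second identity goes the same way, this time using $\varphi_0 d = d' \varphi_1$.

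The third identity, involving $\nu$, is the only one with real bookkeeping. I will telescope
\[
\psi_1 \varphi_1 \nu(x,y) - \nu''(x,y)\psi_0\varphi_0 = \psi_1\big(\varphi_1 \nu(x,y) - \nu'(x,y)\varphi_0\big) + \big(\psi_1 \nu'(x,y) - \nu''(x,y)\psi_0\big)\varphi_0 .
\]
I will then substitute the third identity for $\varphi$ and for $\psi$. The terms $\psi_2([x,y])\varphi_0$ and $\psi_1\varphi_2([x,y])$ combine into $(\psi\circ\varphi)_2([x,y])$. The term $\psi_2(x)\rho'_0(y)\varphi_0$ has to be rewritten using the first identity for $\varphi$, namely $\rho'_0(y)\varphi_0 = \varphi_0\rho_0(y) - d'\varphi_2(y)$. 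Likewise, $\psi_1 \rho'_1(x)\varphi_2(y)$ has to be rewritten using the second identity for $\psi$, namely $\psi_1\rho'_1(x) = \rho''_1(x)\psi_1 + \psi_2(x)d'$.

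After these substitutions, the leftover terms of the form $\psi_2(x) d' \varphi_2(y)$ should cancel between the two rewrites, with antisymmetrization in $x,y$. This cancellation is where I expect the main (though routine) obstacle, since sign conventions must be tracked carefully. I would first verify it at the level of the single term $\psi_2(x) d' \varphi_2(y)$: it should appear with a $+$ sign from one rewrite and a $-$ sign from the other.

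Finally, I will check the category axioms. The unit laws are immediate: with $\varphi_2 = 0$ or $\psi_2 = 0$ and identity maps, the formula for $(\psi\circ\varphi)_2$ reduces to the other map's component. For associativity, both bracketings of a triple composite give the same second component,
\[
\chi_2(x)\psi_0\varphi_0 + \chi_1\psi_2(x)\varphi_0 + \chi_1\psi_1\varphi_2(x),
\]
by linearity. This completes the verification that {\sf 2TermRep}$_\infty(\mathfrak{g})$ is a category.
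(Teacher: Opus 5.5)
Your proposal is correct and matches the paper's proof: you use the same composition $(\psi\circ\varphi)_2(x) = \psi_2(x)\circ\varphi_0 + \psi_1\circ\varphi_2(x)$ and the same identity $(\mathrm{Id}_{V_0}, \mathrm{Id}_{V_1}, 0)$. The paper leaves the verification to the reader, and your sketch fills it in correctly; in particular, in the $\nu$-identity the two rewrites produce $\mp\,\psi_2(x)\,d'\varphi_2(y)$ (and their $x \leftrightarrow y$ counterparts), which cancel under the stated sign conventions.
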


\begin{proof}
    Here, we only define the composition of homomorphisms and describe identity homomorphisms. Let
    \begin{align*}
       & \varphi = (\varphi_0, \varphi_1, \varphi_2) : (V_1 \xrightarrow{d} V_0, \rho_0, \rho_1, \nu) \rightarrow (V'_1 \xrightarrow{d'} V'_0, \rho'_0, \rho'_1, \nu'), \\
       & \psi = (\psi_0, \psi_1, \psi_2) : (V'_1 \xrightarrow{d'} V'_0, \rho'_0, \rho'_1, \nu') \rightarrow (V''_1 \xrightarrow{d''} V''_0, \rho''_0, \rho''_1, \nu'')
    \end{align*}
    be two homomorphisms between $2$-term representations up to homotopy of $\mathfrak{g}$. We define their composition
    \begin{align*}
        \psi \circ \varphi : (V_1 \xrightarrow{d} V_0, \rho_0, \rho_1, \nu) \rightarrow (V''_1 \xrightarrow{d''} V''_0, \rho''_0, \rho''_1, \nu'')
    \end{align*}
    by taking
    \begin{align*}
        (\psi \circ \varphi)_0 = \psi_0 \circ \varphi_0, \quad (\psi \circ \varphi)_1 = \psi_1 \circ \varphi_1 \quad \text{ and } \quad (\psi \circ \varphi)_2 (x) v = \psi_2 (x ) \varphi_0 (v) + \psi_1 ( \varphi_2 (x) v),
    \end{align*}
    for $x \in \mathfrak{g}$ and $v \in V_0$. Next, for any $2$-term representation up to homotopy $\mathcal{V} = ( V_1 \xrightarrow{d} V_0, \rho_0, \rho_1, \nu   )$, we define the identity homomorphism $1_\mathcal{V} : \mathcal{V} \rightarrow \mathcal{V}$ by taking $(1_\mathcal{V})_0 = \mathrm{Id}_{V_0}$, $(1_\mathcal{V})_1 = \mathrm{Id}_{V_1}$ and $(1_\mathcal{V})_2 = 0$. With this setup, one may easily verify that {\sf 2TermRep}$_\infty (\mathfrak{g})$ is a category.
\end{proof}

The following result shows that a $2$-term representation up to homotopy of a Lie algebra yields a $2$-term $L_\infty$-algebra. Moreover, the construction is functorial.

\begin{prop}\label{prop-semid}
    Let $(\mathfrak{g}, [~, ~])$ be a Lie algebra and $(V_1 \xrightarrow{d} V_0, \rho_0, \rho_1, \nu)$ be a $2$-term representation up to homotopy of $\mathfrak{g}$. Then the triple
    \begin{align*}
         (V_1 \xrightarrow{d} \mathfrak{g} \oplus V_0, \llbracket ~, ~ \rrbracket, l_3)
    \end{align*}
    is a $2$-term $L_\infty$-algebra, where for $(x, u), (y, v), (z, w) \in \mathfrak{g} \oplus V_0$ and $\xi, \eta \in V_1$,
    \begin{align*}
       & \llbracket (x, u), (y, v) \rrbracket := ( [x, y] ~ \! , ~ \!  \rho_0 (x) v - \rho_0 (y) u ),  \quad \llbracket (x, u), \xi \rrbracket = - \llbracket \xi , (x, u) \rrbracket := \rho_1 (x) \xi,  \quad \llbracket \xi, \eta \rrbracket := 0,\\
       & \qquad \qquad \qquad \qquad \qquad  l_3 ((x, u), (y, v), (z, w)) := - \nu (x, y) w + c.p. ~ \! .
    \end{align*}
(The $2$-term $L_\infty$-algebra constructed above is called the {\bf semidirect product} of the Lie algebra $\mathfrak{g}$ with the given $2$-term representation up to homotopy.)
     Next, let $(V_1 \xrightarrow{d} V_0, \rho_0, \rho_1, \nu)$ and $(V'_1 \xrightarrow{d'} V'_0, \rho'_0, \rho'_1, \nu')$ be $2$-term representations up to homotopy of a Lie algebra $\mathfrak{g}$, and $(\varphi_0, \varphi_1, \varphi_2)$ be a homomorphism between them. Then the triple
    \begin{align*}
         (\mathrm{Id} \oplus \varphi_0, \varphi_1, \widetilde{\varphi_2}) : (V_1 \xrightarrow{d} \mathfrak{g} \oplus V_0, \llbracket ~, ~ \rrbracket, l_3) \rightarrow (V'_1 \xrightarrow{d' } \mathfrak{g} \oplus V'_0, \llbracket ~, ~ \rrbracket', l'_3)
    \end{align*}
    is an $L_\infty$-homomorphism between the corresponding semidirect product $2$-term $L_\infty$-algebras, where the antisymmetric bilinear map $\widetilde{\varphi_2} : (\mathfrak{g} \oplus V_0 ) \times (\mathfrak{g} \oplus V_0 ) \rightarrow V_1'$ is given by
    \begin{align*}
        \widetilde{\varphi_2} ((x, u), (y, v) ) := \varphi_2 (x) v - \varphi_2 (y) u, \text{ for } (x, u), (y, v) \in \mathfrak{g} \oplus V_0.
    \end{align*}
\end{prop}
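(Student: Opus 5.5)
The plan is a direct verification of the identities (a)--(e) of Definition \ref{defi-2term}(i) for the semidirect product, followed by the three homomorphism identities of Definition \ref{defi-2term}(ii). I will work componentwise on $\mathfrak{g} \oplus V_0$. Throughout, $d$ is extended to the complex $V_1 \xrightarrow{d} \mathfrak{g} \oplus V_0$ by $d\xi = (0, d\xi)$. Antisymmetry of the bracket is immediate, and complete antisymmetry of $l_3$ follows from antisymmetry of $\nu$ together with the cyclic sum.

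\emph{Identities (a)--(d).}
For (a): $d\llbracket (x,u), \xi \rrbracket = (0, d\rho_1(x)\xi) = (0, \rho_0(x) d\xi)$. The right-hand side is $\llbracket (x,u), (0,d\xi) \rrbracket = (0, \rho_0(x)d\xi)$, so the two agree by the compatibility $d \circ \rho_1(x) = \rho_0(x) \circ d$.
For (b): $\llbracket d\xi, \eta \rrbracket = \rho_1(0)\eta = 0$, and likewise $\llbracket \xi, d\eta \rrbracket = 0$, so both sides vanish.
For (c): the $\mathfrak{g}$-component of the Jacobiator vanishes by the Jacobi identity. The $V_0$-component is $\sum_{c.p.} \big(\rho_0(x)\rho_0(y) w - \rho_0(y)\rho_0(x) w - \rho_0([x,y]) w\big)$, which by \eqref{2term-g1} equals $-\sum_{c.p.} d\,\nu(x,y) w = d\, l_3$.
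For (d): take $(z,w) = d\xi = (0, d\xi)$. Then $l_3 = -\nu(x,y)\,d\xi$, since the other cyclic terms involve $\nu(\cdot, 0) = 0$. The right-hand side reduces to $\rho_1(x)\rho_1(y)\xi - \rho_1(y)\rho_1(x)\xi - \rho_1([x,y])\xi$, and \eqref{2term-g2} matches this with $-\nu(x,y) d\xi$.

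\emph{Identity (e).}
This is the main obstacle. I will expand each of the ten terms using $\llbracket (x,u), \nu(\cdot,\cdot)w \rrbracket = \rho_1(x)\nu(\cdot,\cdot)w$ and $l_3(\llbracket a, b \rrbracket, c, e)$ with $\llbracket a, b \rrbracket = ([x,y], \rho_0(x)v - \rho_0(y)u)$. I will then sort the resulting terms by which of $u, v, w, t$ (the $V_0$-entries of the four arguments) appears linearly; by multilinearity it suffices to set all but one of them to zero. By the symmetry of (e) under permutations, it then suffices to check the coefficient of the fourth entry $t$. Collecting the terms gives an expression of the form
\[
\big\{\rho_1(x)\nu(y,z) - \nu(y,z)\rho_0(x)\big\} + c.p. - \big(\nu([x,y],z) + c.p.\big)
\]
applied to $t$, which vanishes by \eqref{2term-g3}. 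The delicate part is the sign bookkeeping for the alternating signs in (e) and the minus sign in the definition of $l_3$. I will carry this out carefully for the coefficient of $t$ and invoke symmetry for the others.

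\emph{The homomorphism.}
For $(\mathrm{Id}\oplus\varphi_0, \varphi_1, \widetilde{\varphi_2})$ I will check the three identities of Definition \ref{defi-2term}(ii) in the same componentwise manner.
The chain-map condition is immediate.
In the first identity the $\mathfrak{g}$-parts cancel. The $V_0$-part splits into a term linear in $v$, namely $\varphi_0\rho_0(x)v - \rho_0'(x)\varphi_0(v) = d'\varphi_2(x)v$, and an analogous term in $u$; together these give $d'\widetilde{\varphi_2}$.
The second identity reduces directly to the second defining condition on $(\varphi_0, \varphi_1, \varphi_2)$.
For the third identity, I again isolate the coefficient of $w$. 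It reads $-\varphi_1\nu(x,y)w + \nu'(x,y)\varphi_0 w$ on the left, while the right-hand side contributes $\varphi_2([x,y])w$-type and $\rho_1'\varphi_2$-type terms. After sign matching, this is exactly the third defining identity for homomorphisms of $2$-term representations up to homotopy.
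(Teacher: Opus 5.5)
Your proposal is correct and is essentially the verification the paper has in mind. The paper states Proposition \ref{prop-semid} without proof, as a routine componentwise check in the same style it uses for Proposition \ref{prop-im2}, and your reduction of (e) and of the third homomorphism identity to the coefficient of a single $V_0$-entry is the right way to organize that check. The signs do work out: the coefficient of the fourth $V_0$-entry in (e) is $-\big(\{\rho_1(x)\nu(y,z)-\nu(y,z)\rho_0(x)\}+c.p.\big)+\big(\nu([x,y],z)+c.p.\big)$, which vanishes by (\ref{2term-g3}), and the coefficient of $w$ in the third homomorphism identity is exactly the third defining identity for $(\varphi_0,\varphi_1,\varphi_2)$. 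Your symmetry reduction is legitimate because both expressions are totally antisymmetric (a formal consequence of the antisymmetry of the bracket, of $l_3$ and of $\widetilde{\varphi_2}$) and each term is linear in exactly one $V_0$-entry.
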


\begin{remark}\label{remark-l2}
    The above proposition shows that a homomorphism between $2$-term representations up to homotopy of a Lie algebra $\mathfrak{g}$ gives rise to an $L_\infty$-homomorphism between the corresponding $2$-term $L_\infty$-algebras. Moreover, it can be checked that this construction preserves the identity homomorphisms and composition of homomorphisms, and thus provides a functor $\Psi: {\sf 2TermRep}_\infty (\mathfrak{g}) \rightarrow  {\sf 2TermL}_\infty$.
\end{remark}

\medskip

We will now generalize the above constructions from Lie algebras to the context of Nijenhuis Lie algebras.

\begin{defn}\label{defi-2term-nijlie}
    Let $((\mathfrak{g}, [~, ~]), n)$ be a Nijenhuis Lie algebra. 

    \medskip
    
   \noindent (i) A {\bf $2$-term representation up to homotopy} of $((\mathfrak{g}, [~, ~]), n)$ is a tuple $ ((V_1 \xrightarrow{d} V_0, \rho_0, \rho_1, \nu), s_0, s_1, \theta)$, where

    \vspace{0.1cm}

    - $(V_1 \xrightarrow{d} V_0, \rho_0, \rho_1, \nu)$ is a $2$-term representation up to homotopy of the Lie algebra $(\mathfrak{g}, [~, ~]),$

       \vspace{0.1cm}

    - $s_0 : V_0 \rightarrow V_0$ and $s_1 : V_1 \rightarrow V_1$ are linear maps satisfying $s_0 \circ d = d \circ s_1$,

       \vspace{0.1cm}

    - $\theta : \mathfrak{g} \rightarrow \mathrm{Hom} (V_0, V_1)$ is a linear map 
    
  \noindent such that for all $x, y \in \mathfrak{g}$, $v \in V_0$ and $\xi \in V_1$, the following list of identities hold:
    \begin{align}
        &s_0 \big( \rho_0 (n(x)) v + \rho_0 (x) s_0 (v) - s_0 (    \rho_0 (x) v )   \big) - \rho_0 ( n (x) ) s_0 (v) = d (\theta (x) v ), \label{2term-n1} \\
        &s_1 \big(  \rho_1 (n (x)) \xi + \rho_1 (x) s_1 (\xi) - s_1 ( \rho_1 (x) \xi )   \big) - \rho_1 (n(x)) s_1 (\xi) = \theta (x) d(\xi), \label{2term-n2}
        \end{align}
        \begin{align}
        \rho_1 &(n(x)) \theta(y) v - \rho_1 (n(y)) \theta(x) v + \theta (x) \big(  \rho_0 (n(y))v + \rho_0 (y) s_0 (v) - s_0 ( \rho_0 (y) v)   \big) \nonumber \\
        &- \theta (y) \big(  \rho_0 (n(x))v + \rho_0 (x) s_0 (v) - s_0 ( \rho_0 (x) v)   \big) - \theta ([n(x), y] + [x, n(y)] - n [x, y]) v  \nonumber \\
        &\qquad - n_1 \big(   \rho_1 (x) \theta (y) v - \rho_1 (y) \theta (x) v + \theta (x) \rho_0 (y) v - \theta (y) \rho_0 (x) v - \theta ([x, y]) v \big)  \nonumber \\
        &\quad - \nu (n(x), n(y)) s_0 (v) + n_1 \big(  \nu (n(x), n(y)) v + \nu (n(x), y) s_0 (v) + \nu (x, n(y)) s_0 (v)  \big)  \nonumber \\
        &\qquad \qquad - n_1^2 \big(  \nu (x, y) s_0 (v) + \nu (n(x), y) v + \nu (x, n(y)) v  \big) + n_1^3 \big( \nu (x, y) v \big) = 0. \label{2term-n3}
    \end{align}

    \noindent (ii) Let $ ((V_1 \xrightarrow{d} V_0, \rho_0, \rho_1, \nu), s_0, s_1, \theta)$ and $ ((V'_1 \xrightarrow{d'} V'_0, \rho'_0, \rho'_1, \nu'), s'_0, s'_1, \theta')$ be $2$-term representations up to homotopy. Then a {\bf homomorphism} from the former one to the latter one is a quadruple $(\varphi_0, \varphi_1, \varphi_2, \varphi_3)$, where

    \vspace{0.1cm}

    - the triple $(\varphi_0, \varphi_1, \varphi_2)$ is a homomorphism of $2$-term representations up to homotopy (of the Lie algebra $\mathfrak{g}$) from $(V_1 \xrightarrow{d} V_0, \rho_0, \rho_1, \nu)$ to $(V'_1 \xrightarrow{d'} V'_0, \rho'_0, \rho'_1, \nu')$,

    \vspace{0.1cm}

    - $\varphi_3 : V_0 \rightarrow V_1'$ is a linear map

\noindent such that for all $x \in \mathfrak{g}$, $v \in V_0$ and $\xi \in V_1$, the following conditions hold:
\begin{align}
    \varphi_0 (s_0 (v)) - s_0' (\varphi_0 (v)) =~& d' (\varphi_3 (v)), \\
    \varphi_1 (s_1 (\xi)) - s_1' (\varphi_1 (\xi)) =~& \varphi_3 (d (\xi)), \\
    \varphi_1 ( \theta(x) v) - \theta'(x) \varphi_0 (v) =~& s_1' \big(  \varphi_2 ( n(x) ) v + \varphi_2 (x) s_0 (v) - \varphi_3 ( \rho_0 (x) v ) \big) \nonumber \\
    + s_1' \big(  \rho_1'(x) \varphi_3 (v) ~-~& s_1' (\varphi_2 (x) v) \big) + \varphi_3 \big(  \rho_0 (n(x))v + \rho_0 (x) s_0 (v) - s_0 ( \rho_0 (x) v) \big) - \varphi_2 (n(x)) s_0 (v).
\end{align}
\end{defn}

\medskip

The following result is a generalization of Proposition \ref{2termrepg} to the context of Nijenhuis Lie algebras.

\begin{prop}
    Let $( (\mathfrak{g}, [~, ~]), n)$ be a Nijenhuis Lie algebra. The collection of all $2$-term representations up to homotopy of it, together with homomorphisms between them, is a category. (This category is denoted by {\sf 2TermRep}$_\infty (\mathfrak{g},n)$).
\end{prop}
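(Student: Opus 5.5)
Following the pattern of Proposition \ref{2termrepg} and Proposition \ref{prop-categ2}, I will define composition and identities explicitly and then check that the composite is again a homomorphism; associativity and unitality are then routine. Let
\begin{align*}
\varphi = (\varphi_0, \varphi_1, \varphi_2, \varphi_3) : \mathcal{V} \rightarrow \mathcal{V}', \qquad \psi = (\psi_0, \psi_1, \psi_2, \psi_3) : \mathcal{V}' \rightarrow \mathcal{V}''
\end{align*}
be homomorphisms of $2$-term representations up to homotopy of $((\mathfrak{g}, [~,~]), n)$. I set $(\psi \circ \varphi)_0 = \psi_0 \circ \varphi_0$, $(\psi \circ \varphi)_1 = \psi_1 \circ \varphi_1$ and $(\psi\circ\varphi)_2 (x) v = \psi_2 (x) \varphi_0 (v) + \psi_1 (\varphi_2 (x) v)$, exactly as in Proposition \ref{2termrepg}. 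By analogy with Proposition \ref{prop-categ2}, I define
\begin{align*}
(\psi \circ \varphi)_3 (v) := \psi_3 (\varphi_0 (v)) + \psi_1 (\varphi_3 (v)).
\end{align*}
The identity is $(\mathrm{Id}_{V_0}, \mathrm{Id}_{V_1}, 0, 0)$.

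The triple $((\psi\circ\varphi)_0, (\psi\circ\varphi)_1, (\psi\circ\varphi)_2)$ is already a homomorphism of $2$-term representations up to homotopy of $\mathfrak{g}$ by Proposition \ref{2termrepg}. So only the three conditions involving $\varphi_3$ and $\theta$ need to be checked.

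The first two conditions are telescoping sums. For instance,
\begin{align*}
\psi_0\varphi_0 s_0 - s_0''\psi_0\varphi_0 = \psi_0 (\varphi_0 s_0 - s_0'\varphi_0) + (\psi_0 s_0' - s_0''\psi_0)\varphi_0 = \psi_0 d'\varphi_3 + d''\psi_3\varphi_0,
\end{align*}
and $\psi_0 d' = d''\psi_1$ turns this into $d''(\psi\circ\varphi)_3$. The analogous computation with $\varphi_1, \psi_1, s_1$ handles the second condition.

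For the third condition I write
\begin{align*}
\psi_1\varphi_1(\theta(x)v) - \theta''(x)\psi_0\varphi_0(v) = \psi_1\big(\varphi_1\theta(x)v - \theta'(x)\varphi_0 v\big) + \big(\psi_1\theta'(x)w - \theta''(x)\psi_0 w\big), \qquad w = \varphi_0(v).
\end{align*}
I then substitute the third defining identity for $\varphi$ in the first bracket and for $\psi$ (at $w$) in the second. The resulting expression must be matched with the right-hand side written for the composite data. The cross terms are what require care. For example, $s_1''\big((\psi\circ\varphi)_2(n(x))v\big)$ splits into $s_1''\psi_2(n(x))\varphi_0 v$ and $s_1''\psi_1\varphi_2(n(x))v$. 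The latter must come from $\psi_1 s_1'\varphi_2(n(x))v$, which requires commuting $\psi_1$ past $s_1'$ using $\psi_1 s_1' - s_1''\psi_1 = \psi_3 d'$. The leftover $\psi_3 d'\varphi_2(\cdots)$ is then rewritten via $d'\varphi_2(x)u = \varphi_0\rho_0(x)u - \rho_0'(x)\varphi_0 u$. Similarly, $\psi_1\rho_1'(x)$ is exchanged with $\rho_1''(x)\psi_1$ at the cost of $\psi_2(x)d'$, and $\varphi_0 s_0$ with $s_0'\varphi_0$ at the cost of $d'\varphi_3$.

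The main obstacle is this bookkeeping: each commutation produces correction terms of the form $\psi_3 d' \varphi_\bullet$, $\psi_2 d'\varphi_3$ or $s_1'' \psi_2 d'\varphi_\bullet$, and these must cancel in pairs. The quadratic terms $s_1'^2$ and $s_1''^2$ are the trickiest to track. I would organize the check by grouping terms according to which pair among $(\psi_2, \varphi_2)$, $(\psi_3, \varphi_3)$, $(\psi_2, \varphi_3)$, $(\psi_3, \varphi_2)$ appears. A cleaner alternative I would try first is a conceptual shortcut. Use the semidirect product of Proposition \ref{prop-semid} extended with $n \oplus s_0$, $s_1$, $n_2 = \theta$-terms, to view these homomorphisms as Nijenhuis $L_\infty$-homomorphisms, where $(\psi\circ\varphi)_3$ corresponds to $\widetilde{\psi_3}\circ(\mathrm{Id}\oplus\varphi_0) + \psi_1\widetilde{\varphi_3}$. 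Then closure under composition follows from Proposition \ref{prop-categ2}, provided this semidirect assignment is injective on the relevant data. Associativity reduces to linearity, e.g. $(\chi\circ(\psi\circ\varphi))_3 = \chi_3\psi_0\varphi_0 + \chi_1\psi_3\varphi_0 + \chi_1\psi_1\varphi_3$, and the identity laws are immediate.
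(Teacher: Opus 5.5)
Your composition and identities are exactly the ones the paper writes down; its proof stops there and simply asserts the axioms. Your treatment of the first two $\varphi_3$-identities, associativity and units is fine. The gap is your claim that, in the third identity, the correction terms cancel in pairs. With the definition as printed in Definition \ref{defi-2term-nijlie}(ii), they do not.

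Carry out exactly the bookkeeping you describe for $\chi=\psi\circ\varphi$. All the $\psi_3 d'\varphi_\bullet$ terms cancel, and so do the two terms $\pm s_1''\psi_2(x)d'\varphi_3(v)$. But in your $(\psi_2,\varphi_3)$ group, the term $-\chi_2(n(x))s_0(v)$ leaves $-\psi_2(n(x))\big(\varphi_0s_0-s_0'\varphi_0\big)(v)=-\psi_2(n(x))\,d'\varphi_3(v)$ with no partner. So the left side minus the right side of the third identity for $\chi$ equals $\psi_2(n(x))\,d'\varphi_3(v)$, which need not vanish.

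A concrete instance: take $\mathfrak{g}=\mathbf{k}e$ abelian with $n=\mathrm{Id}$. For $a,\sigma\in\mathbf{k}$, let $\mathcal{V}_{a,\sigma}$ be $\mathbf{k}\xrightarrow{\mathrm{Id}}\mathbf{k}$ with $\rho_0(e)=\rho_1(e)=a$, $\nu=0$, $s_0=s_1=\sigma$, $\theta=0$; all axioms hold trivially. Define $\varphi:\mathcal{V}_{0,1}\to\mathcal{V}_{0,0}$ by $\varphi_0=\varphi_1=1$, $\varphi_2(e)=0$, $\varphi_3=1$, and $\psi:\mathcal{V}_{0,0}\to\mathcal{V}_{1,0}$ by $\psi_0=\psi_1=1$, $\psi_2(e)=-1$, $\psi_3=0$. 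Both satisfy every identity of Definition \ref{defi-2term-nijlie}(ii). Yet for $\psi\circ\varphi$, which has $(\psi\circ\varphi)_2(e)=-1$ and $(\psi\circ\varphi)_3=1$, the third identity has left side $0$ and right side $-(\psi\circ\varphi)_2(n(e))s_0(v)=v$. So the step your proof rests on is false for the definition as stated.

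The missing ingredient is a term in the definition itself. The left vertical arrow $\rho'(n(x))F_3(v)$ of the homomorphism diagram for $2$-representations has arrow part $\rho_1'(n(x))\varphi_3(v)$. Hence the right-hand side of the third identity should also contain $-\rho_1'(n(x))\varphi_3(v)$. With that term, the composite acquires the extra contribution $-\psi_1\rho_1'(n(x))\varphi_3(v)+\rho_1''(n(x))\psi_1\varphi_3(v)=-\psi_2(n(x))\,d'\varphi_3(v)$. This cancels the residue exactly, and your bookkeeping then closes; in the example, the corrected identity holds for $\varphi$, $\psi$ and $\psi\circ\varphi$.

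Your shortcut through Proposition \ref{prop-categ2} cannot avoid this. The semidirect-product assignment is compatible with composition and reflects the conditions: (\ref{nijhomo3}) on the pair $((x,0),(0,v))$ is exactly the third identity, and on the other pairs it reads $0=0$. Hence the same example shows that (\ref{nijhomo3}) as printed is not closed under the composition of Proposition \ref{prop-categ2} either. There the omitted terms are $\llbracket n_0'\varphi_0(x),\varphi_3(y)\rrbracket'+\llbracket\varphi_3(x),n_0'\varphi_0(y)\rrbracket'$. They belong to the arrow part of $[F_3(x),F_3(y)]'$ in (\ref{homo-nij}), alongside $[\overrightarrow{F_3(x)},\overrightarrow{F_3(y)}]'$.
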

\begin{proof}
    Given two homomorphisms
    \begin{align*}
        &\varphi = (\varphi_0 , \varphi_1, \varphi_2, \varphi_3) :  ((V_1 \xrightarrow{d} V_0, \rho_0, \rho_1, \nu), s_0, s_1, \theta) \rightarrow  ((V'_1 \xrightarrow{d'} V'_0, \rho'_0, \rho'_1, \nu'), s'_0, s'_1, \theta'), \\
        &\psi= (\psi_0, \psi_1, \psi_2, \psi_3) : ((V'_1 \xrightarrow{d'} V'_0, \rho'_0, \rho'_1, \nu'), s'_0, s'_1, \theta') \rightarrow ((V''_1 \xrightarrow{d''} V''_0, \rho''_0, \rho''_1, \nu''), s''_0, s''_1, \theta'')
    \end{align*}
    between $2$-term representations up to homotopy, we define their composition $\psi \circ \varphi$ by setting
    \begin{align*}
         &(\psi \circ \varphi)_0 =  \psi_0 \circ \varphi_0, \quad (\psi \circ \varphi)_1 = \psi_1 \circ \varphi_1, \quad (\psi \circ \varphi)_2 (x) v = \psi_2 (x) \varphi_0 (v) + \psi_1 ( \varphi_2 (x) v) \\
         & \qquad \qquad \qquad \qquad \text{ and } ~ (\psi \circ \varphi)_3 (v) = \psi_1 ( \varphi_3 (v)) + \psi_3 (\varphi_0 (v)),
    \end{align*}
    for all $x \in \mathfrak{g}$ and $v \in V_0$. Moreover, if $\mathcal{V} =  ((V_1 \xrightarrow{d} V_0, \rho_0, \rho_1, \nu), s_0, s_1, \theta)$ is any $2$-term representation up to homotopy, then the identity homomorphism $1_\mathcal{V} : \mathcal{V} \rightarrow \mathcal{V}$ is defined by taking the components as 
    \begin{align*}
        (1_\mathcal{V})_0 = \mathrm{Id}_{V_0}, \quad (1_\mathcal{V})_1 = \mathrm{Id}_{V_1}, \quad (1_\mathcal{V})_2 = 0 ~~ \text{ and } ~~  (1_\mathcal{V})_3 = 0.
    \end{align*}
    With these structures, {\sf 2TermRep}$_\infty (\mathfrak{g},n)$ is a category.
\end{proof}

Next, we show that a $2$-term representation up to homotopy of a Nijenhuis Lie algebra provides us a $2$-term Nijenhuis $L_\infty$-algebra, and the construction is also functorial. This is indeed the motivation for considering Definition \ref{defi-2term-nijlie}.

\begin{prop}\label{prop-im2}
    Let $( (\mathfrak{g}, [~, ~]), n)$ be a Nijenhuis Lie algebra and $ ((V_1 \xrightarrow{d} V_0, \rho_0, \rho_1, \nu), s_0, s_1, \theta)$ be a $2$-term representation up to homotopy of it. Then 
    \begin{align*}
       ( \mathfrak{G}, \mathfrak{N}) := \big( (V_1 \xrightarrow{d} \mathfrak{g} \oplus V_0, \llbracket ~, ~ \rrbracket, l_3) , (n_0 = n \oplus s_0 ~ \! \! , ~ \! \! n_1 = s_1, n_2 ) \big)
    \end{align*}
    is a $2$-term Nijenhuis $L_\infty$-algebra, where the $2$-term $L_\infty$-algebra structure $(V_1 \xrightarrow{d} \mathfrak{g} \oplus V_0, \llbracket ~, ~ \rrbracket, l_3)$ is described in Proposition \ref{prop-semid}, and the antisymmetric bilinear map $n_2 : ( \mathfrak{g} \oplus V_0) \times ( \mathfrak{g} \oplus V_0) \rightarrow V_1$ is given by
    \begin{align*}
        n_2 ((x, u), (y, v)) := \theta (x) v - \theta (y) u, \text{ for } (x, u), (y, v) \in \mathfrak{g} \oplus V_0.
    \end{align*}
  \noindent  Moreover, if $ ((V_1 \xrightarrow{d} V_0, \rho_0, \rho_1, \nu), s_0, s_1, \theta)$ and $ ((V'_1 \xrightarrow{d'} V'_0, \rho'_0, \rho'_1, \nu'), s'_0, s'_1, \theta')$ are $2$-term representations up to homotopy, and $(\varphi_0, \varphi_1, \varphi_2, \varphi_3)$ is a homomorphism between them, then the quadruple
  \begin{align*}
      (\mathrm{Id} \oplus \varphi_0, \varphi_1, \widetilde{\varphi_2}, \widetilde{\varphi_3}) : ( \mathfrak{G}, \mathfrak{N}) \rightarrow ( \mathfrak{G}', \mathfrak{N}')
  \end{align*}
  is a Nijenhuis $L_\infty$-homomorphism between the corresponding $2$-term Nijenhuis $L_\infty$-algebras, where 
  \begin{align*}
      \widetilde{ \varphi_2} ((x,u), (y, v)) := \varphi_2 (x) v - \varphi_2 (y) u ~~ \text{ and } ~~ \widetilde{\varphi_3} (x,u) := \varphi_3 (u), \text{ for } (x, u), (y, v) \in \mathfrak{g} \oplus V_0.
  \end{align*}
\end{prop}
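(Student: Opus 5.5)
By Proposition \ref{prop-semid}, $(V_1 \xrightarrow{d} \mathfrak{g} \oplus V_0, \llbracket ~, ~ \rrbracket, l_3)$ is already a $2$-term $L_\infty$-algebra. Likewise, $(\mathrm{Id} \oplus \varphi_0, \varphi_1, \widetilde{\varphi_2})$ is already an $L_\infty$-homomorphism. So the plan is to check only the Nijenhuis data: the identities (\ref{homo-r1})--(\ref{homo-r3}) for $(n_0, n_1, n_2)$, and the identities (\ref{nijhomo1})--(\ref{nijhomo3}) for $\widetilde{\varphi_3}$.

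\emph{The chain map condition.} Since $d$ lands in $V_0 \subset \mathfrak{g} \oplus V_0$, we have $n_0 \circ d = (n \oplus s_0) \circ d = d \circ s_1$, which is exactly the hypothesis $s_0 \circ d = d \circ s_1$.

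\emph{Identity (\ref{homo-r1}).} Take $(x,u), (y,v)$. On the $\mathfrak{g}$-component, both sides reduce to $n([n(x),y]+[x,n(y)]-n[x,y]) - [n(x),n(y)] = 0$ because $n$ is Nijenhuis; this matches $d(n_2(\cdot,\cdot))$, which has zero $\mathfrak{g}$-part. On the $V_0$-component, bilinearity splits the expression into a part in $v$ and a part in $u$. The $v$-part is exactly the left side of (\ref{2term-n1}) applied to $(x,v)$, and the $u$-part is minus the same with $(y,u)$. Hence it equals $d(\theta(x)v - \theta(y)u) = d(n_2)$.

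\emph{Identity (\ref{homo-r2}).} The bracket $\llbracket (x,u), \xi \rrbracket = \rho_1(x)\xi$ does not involve $u$. So the left side reduces to the left side of (\ref{2term-n2}), which gives $\theta(x) d\xi = n_2((x,u),(0,d\xi))$.

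\emph{Identity (\ref{homo-r3}).} This is the main computational obstacle. I would expand every term for three inputs $(x,u), (y,v), (z,w)$, using that $l_3$ and $n_2$ are multilinear and that $n_2$, $l_3$ and the brackets into $V_1$ each carry exactly one $V$-entry. Every term is then linear in exactly one of $u, v, w$. By complete antisymmetry of the whole expression under cyclic permutation, it suffices to collect the coefficient of $w$, i.e.\ the terms with $(x,0), (y,0), (z,w)$.

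For those terms, $l_3((n(x),0),(n(y),0),(z',s_0 w)) = -\nu(n(x),n(y))s_0(w)$, and similar formulas hold for the terms with fewer $n$'s. The $n_2$ terms produce $\theta(x)(\cdots)$ and $\theta(y)(\cdots)$ together with $\theta(\,[n(x),y]+[x,n(y)]-n[x,y]\,)w$. The $\rho_1(n(x))\theta(y)$ terms come from $\llbracket n_0(x), n_2(y,z)\rrbracket$. The cyclic sum restricted to $w$-coefficients leaves exactly the left side of (\ref{2term-n3}) with $v$ replaced by $w$ (the $n_1$ there being $s_1$), up to an overall sign. It therefore vanishes.

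The care needed here is purely in sign bookkeeping. In particular, the $c.p.$ terms with $z$ in the first slot contribute the $\rho_1(n(x))\theta(y)$ and $\rho_1(n(y))\theta(x)$ terms, and the $l_3$ cyclic sign conventions must be matched.

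\emph{The homomorphism part.} For $\widetilde{\varphi_3}(x,u) = \varphi_3(u)$:
\begin{itemize}
\item Condition (\ref{nijhomo1}): the $\mathfrak{g}$-part is trivial, and the $V_0$-part is the first axiom on $\varphi_3$.
\item Condition (\ref{nijhomo2}): this is the second axiom on $\varphi_3$ verbatim.
\item Condition (\ref{nijhomo3}): I would again decompose by linearity into the $v$-coefficient (with inputs $(x,0),(0,v)$) and the $u$-coefficient, using $\widetilde{\varphi_3}(x,0) = 0$. The term $\llbracket \widetilde{\varphi_3}, \widetilde{\varphi_3}\rrbracket'$ vanishes since $\llbracket V_1, V_1\rrbracket = 0$. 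What remains for $(x,0),(0,v)$ is precisely the third axiom on $(\varphi_0, \varphi_1, \varphi_2, \varphi_3)$ in Definition \ref{defi-2term-nijlie}(ii), and antisymmetry handles the $u$-part.
\end{itemize}
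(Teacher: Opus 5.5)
Your proposal is correct and follows the same route as the paper. Like the paper, you invoke Proposition \ref{prop-semid} for the underlying $L_\infty$ data, reduce (\ref{homo-r1})--(\ref{homo-r3}) to (\ref{2term-n1})--(\ref{2term-n3}), and reduce the homomorphism conditions to the axioms of Definition \ref{defi-2term-nijlie}(ii); your multilinearity and cyclic-symmetry reduction supplies the bookkeeping the paper leaves to the reader, and in fact the $w$-coefficient equals the left side of (\ref{2term-n3}) exactly, with no sign.

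One small slip in your side remark: the $\rho_1(n(x))\theta(y)w$ and $\rho_1(n(y))\theta(x)w$ terms come from the cyclic terms with $(x,0)$ and $(y,0)$ in the first slot. The term with $(z,w)$ first instead contributes $-\theta([n(x),y]+[x,n(y)]-n[x,y])w + s_1\theta([x,y])w$, because $n_2((x,0),(y,0))=0$.
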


\begin{proof}
    Since $(V_1 \xrightarrow{d} V_0, \rho_0, \rho_1, \nu)$ is a $2$-term representation up to homotopy of the Lie algebra $\mathfrak{g}$, it follows from Proposition \ref{prop-semid} that $\mathfrak{G} = (V_1 \xrightarrow{d} \mathfrak{g} \oplus V_0, \llbracket ~, ~ \rrbracket, l_3) $ is a $2$-term $L_\infty$-algebra. Therefore, it remains to show that $\mathfrak{N} = (n_0 = n \oplus s_0 ~ \! \! , ~ \! \! n_1 = s_1, n_2 )$ is a Nijenhuis operator on $\mathfrak{G}$. First, from the condition $s_0 \circ d = d \circ s_1$, it is easy to see that the identity $n_0 \circ d = d \circ n_1$ holds. Next, for any $(x, u) , (y, v) \in \mathfrak{g} \oplus V_0$ and $\xi \in V_1$, we observe that
    \begin{align*}
       & n_0 \big(    \llbracket n_0 (x, u) , (y, v) \rrbracket + \llbracket (x, u), n_0 (y, v) \rrbracket - n_0 \llbracket (x, u), (y, v) \rrbracket \big) - \llbracket n_0 (x, u), n_0 (y, v) \rrbracket  \\
        &= \big( 0 ~ \! , ~ \!  s_0 \big( \rho_0 (n(x)) v + \rho_0 (x) s_0 (v) - s_0 (    \rho_0 (x) v )   \big) - \rho_0 ( n (x) ) s_0 (v)  \\
        & \qquad \qquad -   s_0 \big( \rho_0 (n(y)) u + \rho_0 (y) s_0 (u) - s_0 (    \rho_0 (y) u )   \big) + \rho_0 ( n (y) ) s_0 (u)   \big)\\
        &\stackrel{(\ref{2term-n1})}= \big(  0 ~ \! , ~ \! d ( \theta (x) v - \theta (y) u)  \big) = d \big( \theta (x) v - \theta (y) u   \big) = d \big(  n_2 ((x, u), (y, v))  \big)
    \end{align*}
    and 
     \begin{align*}
        &n_1 \big( \llbracket n_0 (x, u), \xi \rrbracket + \llbracket (x, u), n_1 (\xi) \rrbracket - n_1 \llbracket (x, u), \xi \rrbracket \big) - \llbracket n_0 (x, u), n_1 (\xi) \rrbracket   \\
        &= s_1 \big(  \rho_1 (n(x)) \xi + \rho_1 (x) s_1 (\xi) - s_1 (\rho_1 (x) \xi)    \big) - \rho_1 (n (x)) s_1 (\xi) \\
        &\stackrel{(\ref{2term-n2})}= \theta (x) d(\xi) = n_2 ((x, u), (0, d(\xi)) ) = n_2 ((x, u), d(\xi) ).
    \end{align*}
    Hence, the identities (\ref{homo-r1}) and (\ref{homo-r2}) hold. Finally, by using the condition (\ref{2term-n3}), one may also verify the identity (\ref{homo-r3}). This concludes that $\mathfrak{N} = (n_0 = n \oplus s_0 ~ \! \!  , ~ \! \! n_1 = s_1, n_2 )$ is a Nijenhuis operator on $\mathfrak{G}$, and hence $(\mathfrak{G}, \mathfrak{N})$ is a $2$-term Nijenhuis $L_\infty$-algebra.

    \medskip

    The last part is straightforward, and we leave it to the reader for verification.
\end{proof}

We will now prove the equivalence between the categories of $2$-representations and $2$-term representations up to homotopy of a given Nijenhuis Lie algebra. First, we do it for a given Lie algebra, and then extend it for a Nijenhuis Lie algebra.

\begin{thm}\label{theorem-mainth} Let $(\mathfrak{g}, [~, ~ ])$ be a Lie algebra. Then the categories {\sf 2Rep}$(\mathfrak{g})$ and {\sf 2TermRep}$_\infty (\mathfrak{g})$ are equivalent.
\end{thm}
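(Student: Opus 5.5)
I would mirror the Baez--Crans construction used in the proof of Theorem \ref{thmm-finall}: build functors $\mathsf{S}: {\sf 2Rep}(\mathfrak{g}) \rightarrow {\sf 2TermRep}_\infty(\mathfrak{g})$ and $\mathsf{T}$ in the opposite direction, and exhibit natural isomorphisms $\mathsf{TS} \Rightarrow 1$ and $\mathsf{ST} \Rightarrow 1$.

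\emph{The functor $\mathsf{S}$.} Given a $2$-representation $(C, \rho, \mathcal{R})$, set $V_0 = C_0$, $V_1 = \mathrm{ker}(s) \subset C_1$ and $d = t|_{V_1}$. Each $\rho(x) = (\rho(x)_0, \rho(x)_1)$ is a linear functor, so $\rho(x)_1$ commutes with $s$ and preserves $\mathrm{ker}(s)$. This lets us put $\rho_0(x) := \rho(x)_0$ and $\rho_1(x) := \rho(x)_1|_{V_1}$; the compatibility $d \rho_1(x) = \rho_0(x) d$ is then $t \circ \rho(x)_1 = \rho(x)_0 \circ t$. Define $\nu(x,y)(v) := -\overrightarrow{\mathcal{R}_{x,y}(v)}$, with the sign fixed so that (\ref{2term-g1}) holds: $t - s$ applied to $\mathcal{R}_{x,y}(v)$ gives $[\rho_0(x), \rho_0(y)]v - \rho_0[x,y]v$.

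The identities follow from the structure of $\mathcal{R}$:
\begin{itemize}
\item[(a)] Identity (\ref{2term-g2}) comes from naturality of $\mathcal{R}_{x,y}$ in $v$. Write $\xi = \overrightarrow{f}$ for $f: v \rightarrow w$, compose $\mathcal{R}_{x,y}$ with $\rho([x,y])_1(f)$ and with $[\rho(x), \rho(y)]_1(f)$, and take arrow parts, exactly as was done for (\ref{homo-r2}).
\item[(b)] Identity (\ref{2term-g3}) is the arrow-part translation of diagram (\ref{2rep-diagram}). Arrow parts of composites add, and the arrow part of $[\mathcal{R}_{x,y}, 1_{\rho(z)}](v)$ splits as $\rho(x)_1 \overrightarrow{\mathcal{R}_{y,z}}$-type terms minus $\overrightarrow{\mathcal{R}}$ evaluated at $\rho(z)v$.
\end{itemize}

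On morphisms, a homomorphism $(F_0,F_1,F_2)$ goes to $(\varphi_0, \varphi_1, \varphi_2) = (F_0, F_1|_{\mathrm{ker}(s)}, \overrightarrow{F_2(\cdot)(\cdot)})$. The three homomorphism identities come respectively from the source and target of $F_2$, from naturality of $F_2$, and from diagram (\ref{2rep-mor-diag}). Functoriality is immediate because arrow parts are additive under composition.

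\emph{The functor $\mathsf{T}$.} Given $(V_1 \xrightarrow{d} V_0, \rho_0, \rho_1, \nu)$, take the $2$-vector space $C_0 = V_0$, $C_1 = V_0 \oplus V_1$, with the same $s, t, i, \circ$ as in the proof of Theorem \ref{thmm-finall}. Let $\rho(x)$ be the linear functor with $\rho(x)_0 = \rho_0(x)$ and $\rho(x)_1(v, \xi) = (\rho_0(x)v, \rho_1(x)\xi)$; it is a functor because $d\rho_1 = \rho_0 d$. Define
\[
\mathcal{R}_{x,y}(v) := (\rho_0[x,y]v, -\nu(x,y)v).
\]
Its target is correct by (\ref{2term-g1}), it is natural by (\ref{2term-g2}), and diagram (\ref{2rep-diagram}) commutes by (\ref{2term-g3}). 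A homomorphism $(\varphi_0, \varphi_1, \varphi_2)$ goes to $F_0 = \varphi_0$, $F_1 = \varphi_0 \oplus \varphi_1$ and $F_2(x)(v) = (\rho'_0(x)\varphi_0(v), \varphi_2(x)v)$.

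\emph{The natural isomorphisms.} $\mathsf{ST}$ is literally the identity. For $\mathsf{TS}$, take $\alpha_C = (\mathrm{Id}, (v,h) \mapsto i_v + h, \text{identity } F_2)$. This map is an isomorphism of $2$-vector spaces: every morphism $f$ decomposes uniquely as $i_{s(f)} + \overrightarrow{f}$, and composition corresponds to adding arrow parts. It intertwines the actions because $\rho(x)_1$ is linear and preserves identities, and it carries $\mathcal{R}$ to itself. Naturality of $\alpha$ in $C$ is a direct check.

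The main obstacle is step (b), the diagram bookkeeping for (\ref{2term-g3}). There one has to compute arrow parts of whiskered morphisms such as $[\mathcal{R}_{x,z}, 1_{\rho(y)}](v)$ and keep the signs consistent with the choice $\nu = -\overrightarrow{\mathcal{R}}$. Before the full check I would first verify the sign conventions on (\ref{2term-g1}) and (\ref{2term-g2}).
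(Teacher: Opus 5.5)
Your proposal is correct and follows the paper's proof essentially step for step. It uses the same $\nu(x,y)v=-\overrightarrow{\mathcal{R}_{x,y}(v)}$, the same $\mathcal{R}_{x,y}(v)=(\rho_0[x,y]v,\,-\nu(x,y)v)$ and $F_2(x)(v)=(\rho_0'(x)\varphi_0(v),\,\varphi_2(x)v)$, the same $\alpha_C$ given by $(v,h)\mapsto i_v+h$, and $\beta$ the identity.

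One caution for step (b): the arrow part of $[\mathcal{R}_{x,y},1_{\rho(z)}](v)$ is $\overrightarrow{\mathcal{R}_{x,y}(\rho(z)v)}-\rho(z)_1\overrightarrow{\mathcal{R}_{x,y}(v)}$. The order you describe is the one for the other whiskering, $[1_{\rho(x)},\mathcal{R}_{y,z}](v)$, whose arrow part is $\rho(x)_1\overrightarrow{\mathcal{R}_{y,z}(v)}-\overrightarrow{\mathcal{R}_{y,z}(\rho(x)v)}$. Keeping these two straight is what makes the cyclic sum come out as (\ref{2term-g3}) rather than with the wrong relative signs.
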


\begin{proof}
    Let $(C, \rho, \mathcal{R})$ be a $2$-representation of the Lie algebra $\mathfrak{g}$. First, we consider the $2$-term complex $V_1 \xrightarrow{d} V_0$ associated to the $2$-vector space $C$, i.e.,
    \begin{align}\label{2rep-2term1}
        V_0 = C_0, \quad V_1 = \mathrm{ker}(s) \subset C_1 ~~~ \text{ and } ~~~ d (h) := t(h), \text{ for }h \in V_1.
    \end{align}
    We define linear maps $\rho_0 : \mathfrak{g} \rightarrow \mathrm{End}(V_0)$ and $\rho_1 : \mathfrak{g} \rightarrow \mathrm{End}(V_1)$ by 
    \begin{align}\label{2rep-2term2}
         \rho_0 (x) v := \rho(x) v ~~~~ \text{ and } ~~~~ \rho_1 (x) h := \rho(x) h, \text{ for } x \in \mathfrak{g}, v \in V_0 \text{ and } h \in V_1.
    \end{align}
    Then we have $d ( \rho_1 (x) h) = t (\rho(x) h ) = \rho (x) t (h) = \rho_0 (x) d (h)$, for any $h \in V_1$. We define an antisymmetric bilinear map $\nu : \mathfrak{g} \times \mathfrak{g} \rightarrow \mathrm{Hom} (V_0, V_1)$ by letting $\nu (x, y) v := - \overrightarrow{\mathcal{R}_{x, y} (v)}$, for $x, y \in \mathfrak{g}$ and $v \in V_0$. Then we observe that
    \begin{align*}
        \rho_0 ([x, y]) v - [ \rho_0 (x), \rho_0 (y)] v = \rho ([x, y] ) v - [\rho(x), \rho(y)] v = - (t-s) ~ \! \mathcal{R}_{x, y} (v) = - t ~ \! \overrightarrow{ \mathcal{R}_{x, y} (v) } = d (\nu (x, y) v).
    \end{align*}
    Next, let $h \in V_1$. Then there exists a morphism $f: u \rightarrow v$ such that $h = \overrightarrow{f}$. Hence, from the naturality of $\mathcal{R}$, we get
    \begin{align*}
        \overrightarrow{ \mathcal{R}_{x, y} (u)} + [ \rho (x), \rho (y)] \overrightarrow{f} = \rho ([x, y]) \overrightarrow{f} + \overrightarrow{ \mathcal{R}_{x, y} (v)}.
    \end{align*}
That is,
    \begin{align*}
        \rho ([x, y]) h - [ \rho (x), \rho (y)] h 
        =\overrightarrow{ \mathcal{R}_{x, y} (u)} - \overrightarrow{ \mathcal{R}_{x, y} (v)} 
        =~& \overrightarrow{ \mathcal{R}_{x, y} (u-v)} \\
        =~& \nu (x, y) (v -u) = \nu (x, y) d ( \overrightarrow{f}) = \nu (x, y) d(h).
    \end{align*}
Thus, the identities (\ref{2term-g1}) and (\ref{2term-g2}) hold. Finally, the diagram in (\ref{2rep-diagram}) is equivalent to
\begin{align*}
    \big\{  \overrightarrow{\mathcal{R}_{y, z} (\rho (x) v)} + \overrightarrow{ \mathcal{R}_{[y, z], x} (v) } - \rho (x) ~ \!( \overrightarrow{ \mathcal{R}_{y, z} (v)} ) \big\} + c.p. = 0,
\end{align*}
which verifies the identity (\ref{2term-g3}). Therefore, we get that $(V_1 \xrightarrow{d} V_0, \rho_0, \rho_1, \nu)$ is a $2$-term representation up to homotopy of the Lie algebra $\mathfrak{g}$.

\medskip

Next, let $(C, \rho, \mathcal{R})$ and $(C', \rho', \mathcal{R}')$ be $2$-representations of $\mathfrak{g}$, and $(F_0, F_1, F_2) : (C, \rho, \mathcal{R}) \rightarrow (C', \rho', \mathcal{R}')$ be a homomorphism between them. We define linear maps $\varphi_0 : V_0 \rightarrow V_0'$ and $\varphi_1 : V_1 \rightarrow V_1'$ by
\begin{align}\label{phi01}
    \varphi_0 (v) := F_0 (v) ~~~~ \text{ and } ~~~~ \varphi_1 (h) := F_1 |_{\mathrm{ker}(s)} (h), \text{ for } v \in V_0, ~ \! h \in V_1.
\end{align}
As $(F_0, F_1): C \rightarrow C'$ is a linear functor, we obviously have $\varphi_0 \circ d = d' \circ \varphi_1$. Next, we define a linear map $\varphi_2 : \mathfrak{g} \rightarrow \mathrm{Hom} (V_0, V_1')$ by
\begin{align}\label{phi22}
    \varphi_2 (x) v := \overrightarrow{ F_2 (x) (v) }, \text{ for } x \in \mathfrak{g}, v \in V_0.
\end{align}
Then we have
\begin{align*}
    \varphi_0 ( \rho_0 (x) v) - \rho_0' (x) \varphi_0 (v) =~& F_0 (\rho (x) v) - \rho'(x) F_0 (v) \\
    =~& (t' - s') F_2 (x) v = t' ~ \! \overrightarrow{ F_2 (x) v }
    = d' (\varphi_2 (x) v ).
\end{align*}
Next, let $h \in V_1$. As before, $h =\overrightarrow{f}$ for some morphism $f: u \rightarrow v$. Then
\begin{align*}
    \varphi_1 ( \rho_1 (x) h) - \rho_1' (x) \varphi_1 (h) =~& F_1 ( \rho(x) \overrightarrow{f}) - \rho_1' (x) F_1 (\overrightarrow{f}) \\
    =~& \overrightarrow{ F_2 (x) (v)} - \overrightarrow{ F_2 (x) (u)} \quad (\text{by the naturality of } F_2)\\
    =~& \varphi_2 (x) (v -u) = \varphi_2 (x) d (\overrightarrow{f}).
\end{align*}
Finally, the diagram (\ref{2rep-mor-diag}) implies that 
\begin{align*}
    \overrightarrow{ \mathcal{R}'_{x, y} ( F_0 (v))} - F_1 ( \overrightarrow{ \mathcal{R}_{x, y} (v) }) =~& \overrightarrow{ F_2 ([x, y]) (v) } - \big\{  \overrightarrow{ F_2 (x) ( \rho (y) v)} - \overrightarrow{ F_2 (y) ( \rho (x) v)}   \big\} \\
    &- \big\{   \rho (x) (\overrightarrow{ F_2 (y) v}) -   \rho (y) (\overrightarrow{ F_2 (x) v}) \big\}.
\end{align*}
This shows that $(\varphi_0, \varphi_1, \varphi_2) : (V_1 \xrightarrow{d} V_0, \rho_0, \rho_1, \nu) \rightarrow (V'_1 \xrightarrow{d'} V'_0, \rho'_0, \rho'_1, \nu')$ is a homomorphism between the corresponding $2$-term representations up to homotopy of the Lie algebra $\mathfrak{g}$. As a consequence, we arrive at a functor {\sf S} : {\sf 2Rep}$(\mathfrak{g}) \rightarrow ${\sf 2TermRep}$_\infty (\mathfrak{g})$.

\medskip

On the other hand, let $(V_1 \xrightarrow{ d} V_0, \rho_0, \rho_1, \nu)$ be a $2$-term representation up to homotopy of $\mathfrak{g}$. We first consider the $2$-vector space $C = (V_0 \oplus V_1 \rightrightarrows V_0)$ associated to the $2$-term complex $V_1 \xrightarrow{d} V_0$. Here, the source, target, object-inclusion and the composition map are respectively given by
\begin{align}\label{str-c1}
    s (v, h) = v, \quad t (v, h) = v + d(h), \quad i_v = (v, 0) ~ ~ \text{ and } ~ ~ (v, h) \circ (w, k) = (v, h+k), \text{ when } t (v, h) = s (w, k).
\end{align}
For any $x \in \mathfrak{g}$, we define a linear functor $\rho (x) = \big( \rho (x)_0, \rho(x)_1 \big) : C \rightarrow C$ by taking
\begin{align}\label{str-c2}
    \rho(x)_0 (v) := \rho_0 (x) v \quad \text{ and } \quad \rho (x)_1 (v, h) := \big( \rho_0 (x) v ~ \! , ~ \! \rho_1 (x) h   \big),
\end{align}
for $v \in V_0$ and $(v, h) \in V_0 \oplus V_1$. This defines a linear assignment $\rho: \mathfrak{g} \rightarrow \mathrm{End}(C),~ \!x \mapsto \rho (x)$. Finally, we define a trilinear natural isomorphism $\mathcal{R}_{x, y} (v) : \rho ([x, y])(v) \rightarrow [ \rho (x), \rho (y) ] (v) $, by
\begin{align}\label{str-c3}
    \mathcal{R}_{x, y} (v) := \big( \rho ([x, y]) v ~ \! , ~ \!  - \nu (x, y) v  \big).
\end{align}
Then it turns out that the diagram (\ref{2rep-diagram}) is commutative. Hence $(C = (V_0 \oplus V_1 \rightrightarrows V_0), \rho, \mathcal{R})$ is a $2$-representation of the Lie algebra $\mathfrak{g}$.

\medskip

Next, let $(V_1 \xrightarrow{d} V_0, \rho_0, \rho_1, \nu)$ and $(V'_1 \xrightarrow{d'} V'_0, \rho'_0, \rho'_1, \nu')$ be $2$-term representations up to homotopy of $\mathfrak{g}$, and let $(\varphi_0, \varphi_1, \varphi_2)$ be a homomorphism between them. We define linear maps $F_0 : V_0 \rightarrow V_0'$ and $F_1 : V_0 \oplus V_1 \rightarrow V_0' \oplus V_1'$ respectively by 
\begin{align}\label{map-f01}
F_0 (v) := \varphi_0 (v) ~~ \text{ and } ~~ F_1 (v, h)  := (\varphi_0 (v), \varphi_1 (h)), \text{ for } v \in V_0, (v, h) \in V_0 \oplus V_1. 
\end{align}
Then $(F_0, F_1) : (V_0 \oplus V_1 \rightrightarrows V_0) \rightarrow (V'_0 \oplus V'_1 \rightrightarrows V'_0)$ is a linear functor among the $2$-vector spaces. Next, we set a bilinear natural transformation $F_2 (x)(v) : \rho' (x) (F_0 (v) ) \rightarrow F_0 ( \rho (x)(v))$ by taking
\begin{align}\label{map-f2}
    F_2 (x) (v) := \big(  \rho' (x) (F_0 (v)) ~ \! , ~ \! \varphi_2 (x) v  \big), \text{ for } v \in V_0.
\end{align}
Then $( F_0 , F_1 , F_2 ) : (C = (V_0 \oplus V_1 \rightrightarrows V_0), \rho, \mathcal{R}) \rightarrow (C' = (V'_0 \oplus V'_1 \rightrightarrows V'_0), \rho', \mathcal{R}')$ is a homomorphism between the corresponding $2$-representations. As a result, one get a functor {\sf T}: {\sf 2TermRep}$_\infty (\mathfrak{g}) \rightarrow {\sf 2Rep} (\mathfrak{g})$.

\medskip

It remains to show the existence of natural isomorphisms $\alpha : {\sf TS} \Rightarrow 1_{{\sf 2Rep} (\mathfrak{g})}$ and $\beta: {\sf ST} \Rightarrow 1_{ {\sf 2TermRep}_\infty (\mathfrak{g})}$. Let $(C, \rho, \mathcal{R})$ be any $2$-representation of the Lie algebra $\mathfrak{g}$. After applying the functor {\sf S}, we suppose that ${\sf S} (C, \rho, \mathcal{R}) = (V_1 \xrightarrow{d} V_0, \rho_0, \rho_1, \nu)$ is the $2$-term representation up to homotopy of $\mathfrak{g}$. Then by applying the functor ${\sf T}$ to this structure, we obtain a $2$-representation, say $(C', \rho', \mathcal{R}')$. Then
\begin{align*}
    &C_0' = V_0 = C_0, \quad C_1' = V_0 \oplus V_1 = C_0 \oplus \mathrm{ker}(s),\\
    & s' (v, h) = v, \quad t' (v, h) = v + d(h) = v + t (h), \quad i'_v = (v, 0),\\
    &\rho' (x)_0 (v) = \rho_0 (x) v = \rho(x)v, \quad \rho'(x)_1 (v, h) = ( \rho_0 (x) v , \rho_1 (x) h ) = ( \rho (x) v , \rho(x) h), \\
    &\mathcal{R}'_{x, y} (v) = \big( \rho ([x, y]) v ~ \! , ~ \! - \nu (x, y) v  \big) = \big( \rho ([x,y]) v ~ \! , ~ \! \overrightarrow{ \mathcal{R}_{x, y} (v) } \big) = \mathcal{R}_{x, y} (v),
\end{align*}
for $v \in C_0'$ and $(v, h) \in C_1'$.
Hence $\alpha_C = ((\alpha_C)_0, (\alpha_C)_1, (\alpha_C)_2 ) : (C', \rho', \mathcal{R}') \rightarrow (C, \rho, \mathcal{R})$ is an isomorphism of $2$-representations, where $(\alpha_C)_0 (v) = v$, $(\alpha_C)_1 (v, h) = i_v + h $ and $(\alpha_C)_2 (x)$ is the identity natural transformation. This yields a natural isomorphism $\alpha : {\sf TS} \Rightarrow 1_{{\sf 2Rep} (\mathfrak{g})}$.

On the other side, suppose $\mathcal{V} = (V_1 \xrightarrow{d} V_0, \rho_0, \rho_1, \nu)$ is any $2$-term representation up to homotopy of $\mathfrak{g}$. By applying {\sf T}, we assume that ${\sf T} (\mathcal{V}) = (C, \rho, \mathcal{R})$ is a $2$-representation. After applying the functor ${\sf S}$ to this structure, we obtain a new $2$-term representation up to homotopy, say $\mathcal{V}' = (V'_1 \xrightarrow{d'} V'_0, \rho'_0, \rho'_1, \nu')$. Then it can be checked that $\mathcal{V}'$ is the same as $\mathcal{V}$. Hence, we have the identity isomorphism $\beta_\mathcal{V} = 1_\mathcal{V} : \mathcal{V}' \rightarrow \mathcal{V}$ between $2$-term representations up to homotopy. Thus, a natural isomorphism $\beta: {\sf ST} \Rightarrow 1_{ {\sf 2TermRep}_\infty (\mathfrak{g})}$ also exists. This concludes the proof.
\end{proof}

Let $(\mathfrak{g}, [~, ~])$ be a Lie algebra. Then the equivalence between the categories ${\sf 2Rep}(\mathfrak{g})$ and ${\sf 2TermRep}_\infty (\mathfrak{g})$ fits with the functors $\Phi$ and $\Psi$ given in Remarks \ref{remark-l1} and \ref{remark-l2}, respectively. Diagrammatically, this can be described as
\[
\xymatrix{
{\sf 2Rep(\mathfrak{g}) } \ar[r]^\Phi & {\sf Lie2} \\
{\sf 2TermRep_\infty (\mathfrak{g})} \ar@{<->}[u]^{\cong} \ar[r]_{ \quad \Psi} &  {\sf 2TermL_\infty } \ar@{<->}[u]_{\cong }.
}
\]

\medskip

Given a Lie algebra $(\mathfrak{g}, [~, ~ ])$, our Theorem \ref{theorem-mainth} says that the category of $2$-representations of $\mathfrak{g}$ and the category of $2$-term representations up to homotopy of $\mathfrak{g}$ are equivalent. This result can be extended to the context of Nijenhuis Lie algebras by taken care the Nijenhuis parts. Let $((\mathfrak{g}, [~, ~ ]), n)$ be a Nijenhuis Lie algebra. For any $2$-representation $((C, \rho, \mathcal{R}), S, \Theta)$ of it, the tuple $((V_1 \xrightarrow{d} V_0, \rho_0, \rho_1, \nu), s_0, s_1, \theta)$ is a $2$-term representation up to homotopy, where the structures of $(V_1 \xrightarrow{d} V_0, \rho_0, \rho_1, \nu)$ are described by (\ref{2rep-2term1}), (\ref{2rep-2term2}), and
\begin{align*}
    s_0 (v) := S_0 (v), \quad s_1 (h) := S_1 (h), \quad \theta (x) v := \overrightarrow{ \Theta_{x, v}},
\end{align*}
for $v \in V_0$, $h \in V_1$ and $x \in \mathfrak{g}$. Next, let $((C, \rho, \mathcal{R}), S, \Theta)$ and $((C', \rho', \mathcal{R}'), S', \Theta')$ be $2$-representations, and $(F_0, F_1, F_2, F_3)$ be a homomorphism between them. Then it is easy to see that the quadruple
\begin{align*}
    (\varphi_0, \varphi_1, \varphi_2, \varphi_3) : ((V_1 \xrightarrow{d} V_0, \rho_0, \rho_1, \nu), s_0, s_1, \theta) \rightarrow ((V'_1 \xrightarrow{d'} V'_0, \rho'_0, \rho'_1, \nu'), s'_0, s'_1, \theta')
\end{align*}
is a homomorphism between the corresponding $2$-term representations up to homotopy, where the maps $\varphi_0, \varphi_1, \varphi_2$ are given in (\ref{phi01}), (\ref{phi22}), and $\varphi_3 (v) : = \overrightarrow{F_3 (v)}$, for $v \in V_0$. This construction yields a functor $\overline{ {\sf S}} : {\sf 2Rep}(\mathfrak{g},n) \rightarrow {\sf 2TermRep_\infty }(\mathfrak{g}, n).$ On the other hand, suppose we start with a $2$-term representation up to homotopy, say $((V_1 \xrightarrow{d} V_0, \rho_0, \rho_1, \nu ), s_0, s_1, \theta)$. Then $ (( C = (V_0 \oplus V_1 \rightrightarrows V_0), \rho, \mathcal{R}), S, \Theta )$ is a $2$-representation, where the structures of $(C, \rho, \mathcal{R})$ are given by (\ref{str-c1})-(\ref{str-c3}), and for $v \in V_0$, $(v, h) \in V_0 \oplus V_1$ and $x \in \mathfrak{g}$,
\begin{align*}
    S_0 (v) := s_0 (v), \quad S_1 (v, h) = (s_0 (v), s_1 (h)), \quad \Theta_{x, v} := \big( \rho_0 (n(x)) s_0 (v) ~ \! , ~ \! \theta (x) v \big).
\end{align*}
Next, let $((V_1 \xrightarrow{d} V_0, \rho_0, \rho_1, \nu ), s_0, s_1, \theta)$ and $((V'_1 \xrightarrow{d'} V'_0, \rho'_0, \rho'_1, \nu' ), s'_0, s'_1, \theta')$ be $2$-term representations up to homotopy, and $(\varphi_0, \varphi_1, \varphi_2, \varphi_3)$ be a homomorphism between them. Then the quadruple 
\begin{align*}
    (F_0, F_1, F_2, F_3) : ((C, \rho, \mathcal{R}), S, \Theta) \rightarrow ((C', \rho', \mathcal{R}'), S', \Theta')
\end{align*}
is a homomorphism between the corresponding $2$-representations, where the components $F_0, F_1, F_2$ are given in (\ref{map-f01}), (\ref{map-f2}), and $F_3 (v) := \big(  s_0'(\varphi_0 (v)) ~ \! , ~ \! \varphi_3 (v) \big)$, for any $v \in V_0$. This construction then gives rise to a functor $\overline{\sf T} : {\sf 2TermRep_\infty }(\mathfrak{g}, n) \rightarrow {\sf 2Rep}(\mathfrak{g},n)$. Finally, similar to the proof of Theorem \ref{theorem-mainth}, we may obtain the natural isomorphisms $\overline{\alpha } : \overline{T} ~ \! \overline{S} \Rightarrow 1_{ {\sf 2Rep}(\mathfrak{g},n)}$ and $\overline{\beta} : \overline{S} ~ \! \overline{T} \Rightarrow 1_{{\sf 2TermRep_\infty }(\mathfrak{g}, n)}$. As a consequence, we get the following result.

\begin{thm}\label{thm-last}
    Let $((\mathfrak{g}, [~, ~ ]), n)$ be a Nijenhuis Lie algebra. Then the category {\sf 2Rep}$(\mathfrak{g}, n)$ is equivalent to the category {\sf 2TermRep}$_\infty (\mathfrak{g},n)$.
\end{thm}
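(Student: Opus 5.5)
The plan is to upgrade the equivalence of Theorem \ref{theorem-mainth} by carrying the Nijenhuis data along with every construction, using the functors $\overline{\sf S}$ and $\overline{\sf T}$ described just before the statement. All Lie-algebra-level data (the complex $V_1 \xrightarrow{d} V_0$, $\rho_0, \rho_1, \nu$, and on morphisms $\varphi_0, \varphi_1, \varphi_2$) are already handled by the proof of Theorem \ref{theorem-mainth}. So it suffices to check three things: that the extra components $(s_0, s_1, \theta)$ and $\varphi_3$ satisfy the required identities, that $\overline{\sf S}$ and $\overline{\sf T}$ are functors, and that the natural isomorphisms $\alpha, \beta$ of Theorem \ref{theorem-mainth}, extended by identity third components, remain morphisms in the Nijenhuis categories.

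\textbf{Functor $\overline{\sf S}$.} Starting from $((C,\rho,\mathcal{R}),S,\Theta)$, I set $s_0=S_0$ and $s_1=S_1|_{\ker s}$. The relation $s_0\circ d=d\circ s_1$ holds because $S$ is a functor.

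\begin{itemize}
\item Identity (\ref{2term-n1}) follows from $(t-s)\Theta_{x,v}=t\,\overrightarrow{\Theta_{x,v}}$, exactly as (\ref{homo-r1}) was obtained in Theorem \ref{thmm-finall}.
\item Identity (\ref{2term-n2}) comes from naturality of $\Theta$ in $v$. Writing $h=\overrightarrow{f}$ with $f:u\to v$ and taking arrow parts gives the difference $\overrightarrow{\Theta_{x,v}}-\overrightarrow{\Theta_{x,u}}=\theta(x)d(h)$.
\item Identity (\ref{2term-n3}) should be the arrow-part translation of the coherence diagram for $\Theta$, i.e.\ the analogue of (\ref{big-diag}).
\end{itemize}

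For a morphism $(F_0,F_1,F_2,F_3)$, I set $\varphi_3(v)=\overrightarrow{F_3(v)}$. The first two conditions follow from $(t'-s')F_3(v)$ and from naturality of $F_3$. The third is the arrow part of the homomorphism diagram, in the same manner as (\ref{nijhomo3}) was derived. Preservation of identities and composites is immediate, because $\overrightarrow{G_1(F_3(v))\circ G_3(F_0v)}=\varphi^G_1(\varphi_3(v))+\psi_3(\varphi_0(v))$, which matches the composition law in ${\sf 2TermRep}_\infty(\mathfrak{g},n)$.

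\textbf{Functor $\overline{\sf T}$.} Here I take $S_1(v,h)=(s_0v,s_1h)$ and $\Theta_{x,v}=(\rho_0(n(x))s_0(v),\theta(x)v)$.

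\begin{itemize}
\item The target of $\Theta_{x,v}$ is correct by (\ref{2term-n1}).
\item Naturality of $\Theta$ is equivalent to (\ref{2term-n2}).
\item Commutativity of the coherence diagram reduces, after comparing second components (first components agree automatically since all arrows have matching source and target), to (\ref{2term-n3}).
\item For morphisms, $F_3(v)=(s_0'\varphi_0(v),\varphi_3(v))$ has the right target by the first condition, is natural by the second, and satisfies the square by the third.
\end{itemize}

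\textbf{Natural isomorphisms.} I take $\overline\alpha_C$ with components $(v\mapsto v,\ (v,h)\mapsto i_v+h,\ \mathrm{id},\ \mathrm{id})$. The key check is $S_1(i_v+h)=i_{S_0v}+S_1h$ together with $\Theta'_{x,v}=(\rho(n(x))S_0v,\overrightarrow{\Theta_{x,v}})\mapsto\Theta_{x,v}$. These show it intertwines $(S,\Theta)$ with the reconstructed data, so an identity $F_3$ works. Conversely, $\overline{\sf S}\,\overline{\sf T}$ returns exactly the original $(s_0,s_1,\theta)$, so $\overline\beta$ is the identity. Naturality of both in morphisms reduces to that in Theorem \ref{theorem-mainth} plus the observation that $\varphi_3$ and $F_3$ correspond bijectively under taking arrow parts.

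\textbf{Main obstacle.} The only non-formal step is verifying that the arrow part of the $\Theta$-coherence diagram is exactly (\ref{2term-n3}). This involves bookkeeping of many $\mathcal{R}$- and $\Theta$-terms, and the diagram was only described as ``similar to (\ref{big-diag})''. I would handle it by specializing the already-established computation in Theorem \ref{thmm-finall}, i.e.\ (\ref{big-diag}) $\Leftrightarrow$ (\ref{homo-r3}), to the semidirect products of Propositions \ref{prop-im1} and \ref{prop-im2}. By construction of $\mathcal{N}$ there, the $V$-component of (\ref{homo-r3}) for arguments $(x,0),(y,0),(0,v)$ is precisely (\ref{2term-n3}) (with $n_1$ read as $s_1$). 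Likewise, the corresponding component of (\ref{big-diag}) is the $\Theta$-diagram, and the $l_3$ terms $-\nu(\cdot,\cdot)$ match the $-\overrightarrow{\mathcal{R}}$ terms. This avoids redoing the computation from scratch.
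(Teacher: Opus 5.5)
Your route is the paper's own: it is exactly the sketch preceding the statement, with the same $\overline{\sf S}$ and $\overline{\sf T}$, the same passage to arrow parts, and identity third components for $\overline\alpha,\overline\beta$. The object-level part is fine. In particular, your device for (\ref{2term-n3}) is sound: evaluating (\ref{homo-r3}) on $(x,0),(y,0),(0,v)$ in the semidirect product of Proposition~\ref{prop-im2} reproduces (\ref{2term-n3}) term by term.

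\textbf{The gap is at the level of morphisms.} You assert two things: that the third homomorphism condition of Definition~\ref{defi-2term-nijlie}(ii) is the arrow part of the homomorphism square, and that $F_3(v)=(s_0'\varphi_0(v),\varphi_3(v))$ satisfies the square by that condition. Neither is true as the definitions stand. The left edge $\rho'(n(x))F_3(v)$ of the square has arrow part $\rho_1'(n(x))\varphi_3(v)$. So taking arrow parts yields the stated condition with an extra term $-\rho_1'(n(x))\varphi_3(v)$ on the right-hand side, and nothing cancels it.

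Concretely, let $\mathfrak{g}=\mathbf{k}$ with $n=\mathrm{Id}$, and let $\mathcal V$ be given by $V_0=V_1=\mathbf{k}$, $d=0$, $\rho_0(x)v=\rho_1(x)v=xv$, $\nu=0$, $s_0=s_1=0$, $\theta=0$.
\begin{itemize}
\item On $\overline{\sf T}(\mathcal V)$, take $F_0=\mathrm{Id}$, $F_1=\mathrm{Id}$, $F_2$ the identity and $F_3(v)=(0,v)$. This is an endomorphism in ${\sf 2Rep}(\mathfrak{g},n)$, since both composites in the square have arrow part $xv$.
\item $\overline{\sf S}$ sends it to $(\mathrm{Id},\mathrm{Id},0,\mathrm{Id})$, for which the stated third condition reads $0=xv$.
\item Replace the target's $\theta$ by $\theta'(x)v=-xv$; this is still an object. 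Then $(\mathrm{Id},\mathrm{Id},0,\mathrm{Id})$ does satisfy the stated condition, but its image under $\overline{\sf T}$ violates the square.
\end{itemize}
So neither $\overline{\sf S}$ nor $\overline{\sf T}$ is well defined on morphisms as the categories are written.

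The slip is inherited from the paper, whose sketch makes the same assertion. Its source is already in (\ref{nijhomo3}), where the arrow part of $[F_3(x),F_3(y)]'$ was taken to be $[\overrightarrow{F_3(x)},\overrightarrow{F_3(y)}]'$. In fact it equals
$\llbracket n_0'\varphi_0(x),\varphi_3(y)\rrbracket'+\llbracket \varphi_3(x),n_0'\varphi_0(y)\rrbracket'+\llbracket d'\varphi_3(x),\varphi_3(y)\rrbracket'$.
On the pair $(x,0),(0,v)$ of the semidirect product, this is exactly the missing $\rho_1'(n(x))\varphi_3(v)$. Your phrase ``in the same manner as (\ref{nijhomo3}) was derived'' therefore does not justify the step; the arrow part of the square has to be computed directly.

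\textbf{Repair.} Add $-\rho_1'(n(x))\varphi_3(v)$ to the right-hand side of the third condition in Definition~\ref{defi-2term-nijlie}(ii). Correspondingly, replace $-\llbracket\varphi_3(x),\varphi_3(y)\rrbracket'$ in (\ref{nijhomo3}) by minus the three-term expression above. With that amendment your argument goes through as written:
\begin{itemize}
\item the arrow-part dictionary matches exactly;
\item the amended condition is preserved by the composition law $(\psi\circ\varphi)_3=\psi_1\varphi_3+\psi_3\varphi_0$, since its defect is additive: $\Delta_{\psi\circ\varphi}=\psi_1\Delta_\varphi+\Delta_\psi\varphi_0$;
\item $\overline{\sf S}$ and $\overline{\sf T}$ are then functors, and your $\overline\alpha,\overline\beta$ with identity third components are natural isomorphisms.
\end{itemize}
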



 %

Let $((\mathfrak{g}, [~, ~]), n)$ be any Nijenhuis Lie algebra. In Proposition \ref{prop-im1}, we have seen that a $2$-representation of it gives rise to a Nijenhuis Lie $2$-algebra, and the construction is functorial. Hence, one obtains a functor $\overline{\Phi} : {\sf 2Rep}(\mathfrak{g},n) \rightarrow {\sf NijLie2}$ (which extends the functor $\Phi : {\sf 2Rep}(\mathfrak{g}) \rightarrow {\sf Lie2}$ to the context of Nijenhuis Lie algebras). On the other hand, in Proposition \ref{prop-im2}, we observed that a $2$-term representation up to homotopy of the Nijenhuis Lie algebra $((\mathfrak{g}, [~, ~]), n)$ provides a $2$-term Nijenhuis $L_\infty$-algebra via the semidirect product construction, and this is also functorial. As a result, one gets a functor $\overline{\Psi} : {\sf 2TermRep_\infty }(\mathfrak{g}, n) \rightarrow  {\sf 2TermNijL_\infty }$ (extending the functor $\Psi$ to this context). Then the categorical equivalence given in Theorem \ref{thm-last} fits into the following diagram

\[
\xymatrix{
{\sf 2Rep}(\mathfrak{g},n)  \ar[r]^{ \overline{\Phi}} & {\sf NijLie2} \\
{\sf 2TermRep_\infty }(\mathfrak{g}, n) \ar@{<->}[u]^{\cong} \ar[r]_{ \quad \overline{\Psi}} &  {\sf 2TermNijL_\infty } \ar@{<->}[u]_{\cong }.
}
\]

\medskip
    
\noindent  {\bf Acknowledgements.} The author would like to thank the Department of Mathematics, IIT Kharagpur for providing the beautiful academic atmosphere where the research has been carried out.

\medskip

\noindent {\bf Data Availability Statement.} Data sharing does not apply to this article as no new data were created or analyzed in this study.

\end{document}